%% file: SRgroup2000.tex
\documentclass[11pt,a4paper]{amsart}
\usepackage[T1]{fontenc}
\usepackage[utf8]{inputenc}
\usepackage{amsmath,amssymb,amsthm,mathtools}
\usepackage[margin=27mm]{geometry}
\usepackage{booktabs,longtable,array,graphicx}
\usepackage{xcolor,listings}
\usepackage[hidelinks]{hyperref}
\usepackage{lmodern}
\usepackage{microtype}
\newtheorem{thm}{Theorem}[section]
\newtheorem{lemma}[thm]{Lemma}
\newtheorem{prop}[thm]{Proposition}
\newtheorem{corl}[thm]{Corollary}

\theoremstyle{definition}
\newtheorem{defn}[thm]{Definition}
\newtheorem{algorithm}{Algorithm}

\theoremstyle{remark}
\newtheorem{remark}[thm]{Remark}
\newcommand{\F}{\mathbb F}

\DeclareMathOperator{\Irr}{Irr}
\DeclareMathOperator{\GL}{GL}

\DeclareMathOperator{\Hom}{Hom}

\DeclareMathOperator{\rank}{rank}

\DeclareMathOperator{\Alt}{Alt}

\numberwithin{equation}{section}
\title[Classification of simply reducible groups]{A classification of finite simply reducible groups\\of order at most 2000}
\author[Y. Luan]{Yongzhi Luan}
\address{School of Mathematics, Shandong University, Jinan 250100, China}
\email{luanyongzhi@email.sdu.edu.cn}
\subjclass[2020]{20C15,20D15,20-04,15A63}
\keywords{simply reducible group, finite-group classification, multiplicity-free tensor product, special 2-group, quadratic map}
\date{\today}
\begin{document}
\raggedbottom
\begin{abstract}
We give a computer-assisted classification, up to isomorphism, of all
nontrivial finite simply reducible groups of order at most 2000. There
are 7889 such groups, all of even order. We provide representatives
and a table of their numbers at each order. Order 1024, which is not
available as a catalogue of group objects in the SmallGroups library of the computer algebra system GAP,
requires a separate construction. Combining descendant generation
with a classification of quadratic maps over $\mathbb F_2$, we obtain
exactly 803 groups of this order, including 221 of nilpotency class two.
We also derive exact counting formulas on several infinite families
of orders and explicit bounds from structural subclasses.
\end{abstract}
\maketitle
\setcounter{tocdepth}{1}
\tableofcontents
\clearpage

\section{Introduction}
Symmetry organizes the eigenspaces of a quantum-mechanical Hamiltonian
into representations of its symmetry group. When two systems carrying
irreducible representations $U$ and $V$ are coupled, the corresponding
representation is $U\otimes V$. Its irreducible decomposition describes
the possible symmetry types of the coupled states. Wigner introduced
simply reducible groups in 1941 in this setting, motivated by
symmetry-preserving perturbations of coupled eigenvalue problems
\cite{Wig41}; see also \cite[Introduction]{Luan20}.

The relevant simplification occurs when every irreducible constituent
of $U\otimes V$ appears at most once. For every irreducible $W$, each nonzero coupling
space $\Hom_G(W,U\otimes V)$ is then one-dimensional, so a normalized
intertwiner is determined up to a phase. Moreover, by Schur's lemma,
a $G$-invariant perturbation restricted to $U\otimes V$ acts as a scalar
on each irreducible summand: there is no additional multiplicity space
in which it can mix states of the same symmetry type. This is the
representation-theoretic reason that multiplicity-free coupling
simplifies the determination of symmetry-adapted states and tensor-operator
matrix elements. The familiar Clebsch--Gordan decomposition for angular
momentum illustrates this phenomenon. Wigner's definition combines
this condition with self-duality of the irreducible representations.

\begin{defn}[Wigner \cite{Wig41}]\label{def:SR}
A finite group $G$ is \emph{simply reducible}, or an \emph{SR-group}, if
\begin{enumerate}
\item every $g\in G$ is conjugate to $g^{-1}$; and
\item $\langle\chi\psi,\theta\rangle_G\in\{0,1\}$ for all
$\chi,\psi,\theta\in\Irr(G)$.
\end{enumerate}
Here representations are finite-dimensional over $\mathbb C$, and
$\langle a,b\rangle_G=|G|^{-1}\sum_{g\in G}a(g)\overline{b(g)}$.
\end{defn}
The first condition is called \emph{ambivalence}. It is equivalent to
all irreducible characters being real-valued, or to all irreducible
complex representations being self-dual. It does not require every
representation to be realizable over $\mathbb R$; quaternionic
representations are allowed.

The restriction to even orders follows immediately from ambivalence.
The identity is the only element of a finite group of odd order that
can be conjugate to its inverse
\cite[Chapter~XXVI, Exercise~1.1]{BKZ19}. Indeed, if
$xgx^{-1}=g^{-1}$, conjugation by $x$ induces on $\langle g\rangle$
an automorphism of odd order, whereas inversion has order at most two.
Thus $g=g^{-1}$, and odd order forces $g=1$. Consequently every
nontrivial finite SR-group has even order. The trivial group is SR
and is recorded separately below.

The two representation-theoretic conditions also impose strong
restrictions on the structure of the group. We shall use the following theorem in the arithmetic consequences
of the classification.
\begin{thm}[Kazarin--Chankov \cite{KC10}]\label{thm:sr-solvable}
Every finite simply reducible group is solvable.
\end{thm}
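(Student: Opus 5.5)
The natural strategy is a minimal counterexample argument: reduce to a statement about nonabelian finite simple groups and then dispose of those using the classification. First I would record that the SR property passes to quotients. Irreducible characters of $G/N$ inflate to irreducible characters of $G$, and products and inner products are preserved under inflation. Images of conjugate elements are conjugate, so ambivalence also descends. Now take $G$ to be a non-solvable SR-group of minimal order. Every proper quotient is then solvable, so $G$ has a unique minimal normal subgroup $N$, and $N$ is nonsolvable, hence $N\cong S^k$ for a nonabelian simple group $S$.

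The delicate point is that the SR property does not obviously pass to normal subgroups or to arbitrary subgroups. So I would not try to show that $S$ itself is SR. Instead I would extract from the SR property of $G$ a checkable necessary condition on $S$. There are two such conditions.

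\emph{Ambivalence.} Every element of $N$ is $G$-conjugate to its inverse. Hence every element of $S$ is conjugate to its inverse under $\mathrm{Aut}(S)$, up to the permutation of the factors of $N$.

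\emph{Multiplicity freeness.} By Clifford theory, if $\theta\in\Irr(N)$ lies under $\chi\in\Irr(G)$, then multiplicities in $\chi\bar\chi$ bound multiplicities in products of $G$-orbit sums of $N$-characters. In particular, for an $\mathrm{Aut}(S)$-orbit sum $\Theta$ of characters of $S$ (or of $S^k$), the constituents of $\Theta\Theta$ that are invariant must occur with multiplicity $\le$ some constant. A cleaner route is a counting inequality: SR implies $\sum_{\chi\in\Irr(G)}\chi(1)^3\le\sum_g|C_G(g)|^{2}$, or similar Frobenius–Schur type bounds. Then one shows that any extension of $S^k$ by a solvable group violates this bound.

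The main obstacle is the final step: showing that no almost simple group, or wreath-type extension of $S^k$, satisfies these conditions. For this I would use the classification of finite simple groups. Only very few simple groups have all their classes real, or real up to automorphisms. For alternating groups, one exhibits $\chi,\psi\in\Irr(S_n)$ with a Littlewood–Richardson coefficient $\ge2$ for $n\ge 5$ in an $\mathrm{Aut}$-stable way. For groups of Lie type, one uses Deligne–Lusztig characters to find a tensor square of a Steinberg-type or unipotent character with multiplicity $\ge 2$, for instance $\mathrm{St}\otimes\mathrm{St}$ contains almost every character with large multiplicity. Sporadic groups are checked directly from the ATLAS. The Steinberg square argument is where I would start, because $\langle \mathrm{St}^2,\theta\rangle$ is typically huge and $\mathrm{St}$ extends to $\mathrm{Aut}(S)$. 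That extension lets one transfer the multiplicity to an irreducible character of $G$ lying over $\mathrm{St}^{\otimes k}$, which contradicts condition (2) of Definition~\ref{def:SR}.
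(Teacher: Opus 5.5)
The paper gives no proof of this theorem; it imports it from \cite{KC10}, so your sketch has to stand on its own, and it has a genuine gap. Your opening reduction is fine: quotient closure, a minimal counterexample with a unique minimal normal subgroup $N\cong S^k$, and $C_G(N)=1$, so $G\le\mathrm{Aut}(S)\wr S_k$. The rest, however, is a programme. Its central sentence, that every extension of $S^k$ by a solvable group violates some bound, is the whole theorem, and the tools you name cannot carry it.

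Two of those tools are too weak. The inequality $\sum_\chi\chi(1)^3\le\sum_g|C_G(g)|^2$ holds in every finite group, since $\sum_\chi\chi(1)^3\le\max_\chi\chi(1)\cdot|G|\le|G|^2$, which is already the $g=1$ term; it detects nothing. The ambivalence filter is also far weaker than you claim. $S_n$ is ambivalent. Every matrix is similar to its transpose, so each element of $\mathrm{PSL}_n(q)$ is sent to its inverse by some product of the inverse-transpose and a diagonal automorphism. Essentially all the work must therefore come from multiplicities.

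There the Steinberg step fails on two counts. First, multiplicities do not pass from $N$ to $G$. If $\chi\in\Irr(G)$ extends $\mathrm{St}^{\otimes k}$ and $\theta\in\Irr(N)$, then $\langle(\chi^2)_N,\theta\rangle=\sum_{\eta\in\Irr(G\mid\theta)}\langle\chi^2,\eta\rangle e_\eta$, where the $e_\eta$ are the ramification indices and $\sum_\eta e_\eta^2=|I_G(\theta):N|$. Hence SR only forces $\langle(\chi^2)_N,\theta\rangle\le|I_G(\theta):N|$. This bound can be as large as $|\mathrm{Out}(S)|$, and larger when $k>1$, so a multiplicity $2$ in $\mathrm{St}^2$ contradicts nothing. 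Concretely, for $A_5\cong\mathrm{PSL}_2(5)$ the square of the degree-$5$ Steinberg character contains the characters of degrees $4$ and $5$ twice each. Yet in $S_5$ the square of its extension $\chi^{(3,2)}$ is multiplicity-free. The failure of SR in $S_5$ is witnessed instead by $(\chi^{(3,1,1)})^2$, which contains $\chi^{(3,2)}$ twice.

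Second, $\mathrm{St}^2$ has no large multiplicities in rank one. In $S=\mathrm{PSL}_2(q)$, with $q=p^f$, the Steinberg character vanishes on nontrivial unipotent elements and is $\pm1$ on all other nonidentity elements. Hence $\langle\mathrm{St}^2,\theta\rangle=q(q^2-1)/|S|$ for every $\theta$ of degree $q$ or $q\pm1$, which is $2$ for odd $q$ and $1$ for even $q$. For $\mathrm{SL}_2(2^f)$, $\mathrm{St}^2$ is exactly the sum of all irreducible characters, each once; this already happens for the degree-$4$ character of $A_5\cong\mathrm{SL}_2(4)$. Moreover $\mathrm{PSL}_2(q)$ is itself ambivalent when $q\not\equiv3\pmod4$ and admits field automorphisms of order $f$. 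So this is exactly the family your method must exclude, and it cannot.

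What is missing is a family-by-family construction, via the classification, of characters $\chi,\psi\in\Irr(G)$ and $\theta\in\Irr(N)$ with $\langle(\chi\psi)_N,\theta\rangle>\sum_{\eta\in\Irr(G\mid\theta)}e_\eta$. Alternatively, you would need some other obstruction that holds for every $G$ with socle $N$ and $G/N$ solvable. Nothing in the proposal supplies either.
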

A classification without a bound on the order remains open
\cite[Problem~11.94]{KM26}. Examples and their automorphism groups
have been studied in \cite{Luan20,Luan21}. The aim of the present
paper is to complete the classification in the range $|G|\le2000$:
we construct one representative of each isomorphism type, justify
the exhaustiveness of the lists, and determine the number of types
at each order.

Write $f(n)$ for the number of isomorphism classes of SR-groups of
order $n$, with $f(1)=1$. Thus $f(n)$ counts the groups satisfying
Definition~\ref{def:SR}; it is not the number of all finite groups
of order $n$.
\begin{thm}[The classification]\label{thm:main-classification}
There are exactly $7889$ nontrivial SR-groups of order at most $2000$.
Of these, $803$ have order $1024$, distributed as
\[
\begin{array}{c|rrrrrrrrr}
\text{nilpotency class}&1&2&3&4&5&6&7&8&9\\\hline
\text{isomorphism classes}&1&221&373&133&48&16&7&2&2.
\end{array}
\]
Representatives and counts at all other even orders are given by the
SmallGroups identifiers in the accompanying catalogues. Representatives
at order $1024$ are given by reconstructible polycyclic presentations.
\end{thm}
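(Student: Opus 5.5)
The classification will be assembled from a computer search, organized by order. First, every nontrivial SR-group has even order, so only even $n\le2000$ need to be examined. For every even $n\ne1024$ I would loop over the SmallGroups library and test the two conditions of Definition~\ref{def:SR} for each group $G$ of order $n$. The cheap ambivalence test goes first: every column of the character table must be real, equivalently every class equals its inverse class. It already kills almost all candidates. For the survivors I would compute $\Irr(G)$ and check that $\langle\chi\psi,\theta\rangle_G\in\{0,1\}$ for all triples. Symmetry in $\chi,\psi$, together with the identity $\langle\chi\psi,\theta\rangle=\langle\chi\theta,\psi\rangle$ (which uses reality), cuts the number of triples by roughly a factor of six.

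The large orders $768,1536,\dots$ have enormous numbers of groups, so I would prefilter there. Ambivalence descends to quotients, and the SR property passes to quotients because $\Irr(G/N)\subseteq\Irr(G)$. Hence every quotient of an SR-group is SR. This allows a filter on $G/\Phi(G)$, on abelianisation (which must be elementary abelian $2$-group), and on $G/G'$ and centre-type invariants before any character tables are computed. Theorem~\ref{thm:sr-solvable} restricts the search to solvable groups, which matters at orders where the library carries nonsolvable ones. Summing the resulting counts per order and adding the $1024$ total should give $7889$.

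Order $1024$ is the main obstacle, since the library does not provide the groups. Here I would run $p$-group generation (the ANUPQ/$p$-quotient descendant algorithm) from the order-$512$ SR-groups. Every SR-group of order $1024$ is a descendant of its quotient by the last nontrivial term of the lower exponent-$2$ central series, and that quotient is again SR by the quotient property. So I would take, level by level, only SR parents and compute their immediate descendants, pruning at each step by ambivalence and multiplicity-freeness. This handles classes $\ge3$, where the parents are few. The class-two groups are different, because their parents are elementary abelian and the descendant trees are huge.

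For the class-two case I would parametrize differently. A class-two $2$-group $G$ with $G/Z$ and $G'$ elementary abelian is governed by the squaring map $q\colon V=G/\Phi(G)\to W=\Phi(G)$, a quadratic map over $\F_2$ whose polarization is the commutator form. I would reduce to the special case and enumerate $\GL(V)\times\GL(W)$-orbits of such quadratic maps, expressing the SR conditions in terms of $q$. Ambivalence becomes a condition on commutator values versus squares. Multiplicity-freeness becomes a condition on the radicals of the forms $\lambda\circ\beta$ for $\lambda\in W^*$, since those forms determine the fully ramified characters. Orbit enumeration with canonical forms then yields the $221$ class-two groups. Finally I would certify the count by computing isomorphism invariants together with explicit isomorphism tests on the pc presentations, to exclude duplicates across both branches.
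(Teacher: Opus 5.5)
\textbf{Main gap: the parent roster for class at least three.} Your catalogue part is sound. Checking $\langle\chi\psi,\theta\rangle$ triple by triple on the ambivalent survivors is an equally exact alternative to Wigner's identity $\sum_g r_2(g)^3=\sum_g c(g)^2$, which the paper uses. The class-$\ge3$ step at order $1024$, however, has a genuine gap.

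As written, you run $p$-group generation only from the SR-groups of order $512$. That produces only step-size-one immediate descendants. The canonical parent of $G$ is $G/P_{c-1}(G)$, and nothing forces the last nontrivial term $P_{c-1}(G)$ of the lower exponent-$2$ central series to have order $2$. If $|P_{c-1}(G)|=2^s$, then $G$ is a step-$s$ immediate descendant of an SR parent of order $2^{10-s}$. Because the canonical parent is intrinsic, $G$ then has no ancestor of order $512$ at all, so it never appears in your search.

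The correct job roster is every nonabelian SR-group of order $2^k$, $3\le k\le 9$, with step size $10-k$. This matters numerically. In the paper's run, $105$ of the $581$ groups of class at least three come from parents of order $256$ (step $2$), and $2$ come from parents of order $128$ (step $3$). Restricting to order-$512$ parents would therefore give $474$ and $f(1024)=696$ instead of $803$.

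\textbf{Class two: the missing reduction.} Your outline is right: central splitting, ambivalence as $q(v)\in\beta(v,V)$, and multiplicity-freeness as the rank/radical condition on the forms $\lambda\circ\beta$. What it leaves out is the idea that makes the orbit problem tractable. At $(m,r)=(7,3)$ and $(6,4)$ the commutator spaces are $3$-spaces in the $21$-dimensional $\Alt(\F_2^7)$ and $4$-spaces in the $15$-dimensional $\Alt(\F_2^6)$. So ``orbit enumeration with canonical forms'' needs a reduction, not a direct sweep.

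The paper's reduction is quotient closure again. For a line $L\le S'$, the group $S/L$ is a class-two SR-group of order $512$, possibly non-special. Hence the commutator space of $S$ is a one-form extension of a space already in the order-$512$ census. The paper combines this with:
\begin{enumerate}
\item exclusion of $(9,1)$ by parity, $(4,6)$ by the rank equation, and $(5,5)$ by an exhaustive certificate plus the full-star obstruction;
\item the pencil classification at $(8,2)$;
\item a faithful graph encoding that yields exact $\GL(V)\times\GL(W)$-orbits on the full affine refinement spaces.
\end{enumerate}

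\textbf{Duplicate checking.} Checking for duplicates across your two branches is unnecessary, since nilpotency class already separates them. Nonredundancy within each branch comes from the orbit constructions themselves, namely $p$-group generation with intrinsic parents and the canonical-form orbit computation. Invariant comparisons cannot supply it.
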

The proof is computer-assisted. Section~\ref{sec:census} develops the
structural reductions and exact tests used to search the SmallGroups
catalogues at orders other than $1024$. Order $1024$ requires a separate
construction. Section~\ref{sec:1024higher} uses canonical descendants
to obtain $581$ groups of nilpotency class at least three.
Sections~\ref{sec:class2} and~\ref{sec:remaining} classify the class-two
groups through quadratic maps over $\mathbb F_2$, yielding $221$ types.
Together with the unique abelian SR-group of this order, these disjoint
lists give $f(1024)=803$. Exhaustive enumeration and exact isomorphism
classification establish the counts; numerical invariants serve as
checks and as descriptive information in the catalogues.

Section~\ref{sec:count-patterns} studies the arithmetic information
revealed by the classification. It proves formulas for $f(2m)$ with
$m$ odd, for $f(4p^e)$ with $p$ an odd prime, and for $f(8p)$ with
$p\ge5$ prime. It also gives structural counting formulas and explains
why a single multiplicative formula cannot describe all orders.
The complete count table appears in Appendix~\ref{sec:counttable}.

The classification catalogues, reconstruction data, and source code
are available in the accompanying GitHub repository \cite{github_luan26}.% \url{https://github.com/luanyongzhi/Simply-Reducible-Groups-2000}.
Pseudocode for the principal algorithms and instructions for reconstructing the
groups are provided in the appendices.

\section{The catalogue census: reductions, tests and evidence}
\label{sec:census}\label{sec:tools}
The catalogue search needs a test that rejects groups as soon as a
necessary condition fails and gives an exact answer for every survivor.
The structural results explain the search restrictions and the later
constructions; the character identities provide the exact acceptance
test used in the catalogue traversal. Closure also supplies the smaller-order
inputs for the separate construction at order 1024.

\subsection{Structural reductions}\label{subsec:structural-reductions}
The following closure properties justify passing from accepted groups
to quotients and constructing direct products. In particular, an SR group
can have only SR canonical parents, which will be the completeness
principle for the descendant search.
\begin{prop}[{\cite[Chapter~XXVI, Exercises~1.7--1.8]{BKZ19}}]\label{prop:closure}
Quotients and direct products of finite SR-groups are SR. Conversely, if
$A\times B$ is SR, both factors are SR. A central product of SR-groups is SR.
\end{prop}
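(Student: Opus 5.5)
The plan is to reduce everything to two elementary facts about characters: inflation from a quotient, and the description of $\Irr(A\times B)$ as outer tensor products. The remaining assertions then follow formally.

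\emph{Quotients.} Let $N\trianglelefteq G$ with $G$ an SR-group. Ambivalence passes to $G/N$ immediately: if $xgx^{-1}=g^{-1}$, then $(xN)(gN)(xN)^{-1}=(gN)^{-1}$. For the multiplicity condition, use inflation. It identifies $\Irr(G/N)$ with the subset of $\Irr(G)$ of characters having $N$ in their kernel, and it is compatible with products of characters. It also preserves inner products, since $\langle\tilde a,\tilde b\rangle_G=|G|^{-1}\sum_{g}a(gN)\overline{b(gN)}=\langle a,b\rangle_{G/N}$ because each fibre has size $|N|$. Hence for $\chi,\psi,\theta\in\Irr(G/N)$ the value $\langle\chi\psi,\theta\rangle_{G/N}$ equals the corresponding inner product in $G$, which lies in $\{0,1\}$.

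\emph{Direct products.} Let $A$ and $B$ be SR-groups. An element $(a,b)$ is conjugate to its inverse via $(x,y)$ whenever $x$ and $y$ do this for $a$ and $b$ separately, so $A\times B$ is ambivalent. Next I would recall that $\Irr(A\times B)=\{\alpha\times\beta:\alpha\in\Irr(A),\ \beta\in\Irr(B)\}$, and that $(\alpha_1\times\beta_1)(\alpha_2\times\beta_2)=(\alpha_1\alpha_2)\times(\beta_1\beta_2)$. The inner product on $A\times B$ factorizes as a product of the inner products on $A$ and on $B$. This gives
\[
\langle(\alpha_1\times\beta_1)(\alpha_2\times\beta_2),\alpha_3\times\beta_3\rangle_{A\times B}
=\langle\alpha_1\alpha_2,\alpha_3\rangle_A\,\langle\beta_1\beta_2,\beta_3\rangle_B ,
\]
which is a product of two numbers in $\{0,1\}$ and hence lies in $\{0,1\}$.

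\emph{Converse and central products.} For the converse, note that $A\cong(A\times B)/(1\times B)$ and $B\cong(A\times B)/(A\times 1)$. Both factors are therefore quotients of the SR-group $A\times B$, and the quotient case applies.

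For central products, I would fix the definition as follows. A central product of $A$ and $B$ is $(A\times B)/Z$, where $Z\le Z(A)\times Z(B)$ is a central subgroup meeting $A\times1$ and $1\times B$ trivially. This amounts to identifying isomorphic central subgroups via $z\mapsto z\phi(z)^{-1}$. With this definition, a central product of SR-groups is a quotient of the SR-group $A\times B$, so it is SR by the two previous parts.

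The only real point of care, which I expect to be the main obstacle, is making this definition of central product explicit. Once every central product in use is realized as a quotient of a direct product by a central subgroup, the claim is purely formal. Everything else is standard character theory.
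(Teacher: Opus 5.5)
Your proof is correct. The paper gives no argument of its own here and simply cites \cite[Chapter~XXVI, Exercises~1.7--1.8]{BKZ19}; your route is the standard argument those exercises call for: inflation for quotients, factorization of the inner product over outer tensor products for direct products, and realizing both the factors and any central product as quotients of $A\times B$. The condition that $Z$ meet $A\times1$ and $1\times B$ trivially is not actually needed, and internal central products $G=AB$ with $[A,B]=1$ are covered as well, since $(a,b)\mapsto ab$ is a surjective homomorphism $A\times B\to G$.
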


Ambivalence gives inexpensive restrictions on the center and
abelianization. A group failing either restriction can be discarded
before the more detailed SR test. These restrictions also identify the
abelian and nilpotent parts of the census.
\begin{prop}[Elementary abelian restrictions]\label{prop:elementary}
For a finite ambivalent group $G$, both $Z(G)$ and $G/G'$ are elementary
abelian 2-groups, allowing the trivial group. The only abelian SR-groups
are elementary abelian 2-groups. Every nontrivial SR-group has even order,
and every finite nilpotent SR-group is a 2-group.
\end{prop}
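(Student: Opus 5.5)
The plan is to derive everything from ambivalence, which says each $g\in G$ is conjugate to $g^{-1}$.

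\emph{Center.} If $z\in Z(G)$, then its conjugacy class is $\{z\}$. Since $z^{-1}$ is conjugate to $z$, we get $z^{-1}=z$, so $z^2=1$. Hence $Z(G)$ is an abelian group of exponent dividing $2$, i.e.\ elementary abelian $2$-group, possibly trivial.

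\emph{Abelianization.} Let $gG'\in G/G'$ and choose $x$ with $xgx^{-1}=g^{-1}$. Passing to the abelian quotient gives $gG'=g^{-1}G'$, so $g^2\in G'$. Thus $G/G'$ has exponent dividing $2$. (Equivalently, $G/G'$ is a quotient of an ambivalent group, hence ambivalent, and an abelian ambivalent group is its own center, so the previous paragraph applies.)

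\emph{Abelian SR-groups.} If $G$ is an abelian SR-group, then $G=Z(G)$ is an elementary abelian $2$-group. Conversely, an elementary abelian $2$-group $C_2^n$ is SR. It is ambivalent since $g=g^{-1}$. All irreducible characters are linear with values $\pm1$, so a product $\chi\psi$ is again a single linear character, and therefore $\langle\chi\psi,\theta\rangle\in\{0,1\}$. Alternatively, $C_2$ is SR by direct check, and Proposition~\ref{prop:closure} extends this to direct products.

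\emph{Even order.} This was shown in the introduction: for $|G|$ odd, the relation $xgx^{-1}=g^{-1}$ forces $g=1$. So a nontrivial ambivalent group has even order.

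\emph{Nilpotent SR-groups are $2$-groups.} This is the only step needing a little structure, and the plan is as follows. Write the nilpotent group as $G=P_2\times P_{2'}$, the product of its Sylow $2$-subgroup and its Hall $2'$-subgroup. The projection $G\to P_{2'}$ is a quotient map, and quotients of ambivalent groups are ambivalent: if $xgx^{-1}=g^{-1}$ in $G$, the same relation holds for the images. So $P_{2'}$ is an ambivalent group of odd order, hence trivial by the even-order argument. Therefore $G=P_2$ is a $2$-group. Alternatively, one may quote Proposition~\ref{prop:closure} for the direct factor.

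None of these steps presents a real obstacle. The only point to state carefully is that ambivalence, not the full SR condition, is inherited by quotients; it is immediate from the definition.
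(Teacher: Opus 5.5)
Your proposal is correct and follows the paper's proof essentially step for step. In both, a central element is conjugate only to itself, so $z=z^{-1}$; $G/G'$ is an ambivalent abelian quotient and so has exponent at most two; elementary abelian $2$-groups are SR because their irreducible characters are linear and real; and an odd-order direct factor of a nilpotent SR-group would be a nontrivial ambivalent quotient of odd order, which is impossible.
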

\begin{proof}
A central element is conjugate only to itself, so $z=z^{-1}$.
The abelian quotient $G/G'$ is ambivalent and consequently has exponent
at most two. Conversely all irreducible characters of an elementary
abelian 2-group are linear and real, so that group is SR.

The odd-order assertion was established in the introduction.
A finite nilpotent group is the direct product of its Sylow subgroups;
an odd Sylow factor would be a nontrivial odd-order SR quotient.
\end{proof}

To record the contribution of direct products without mistaking it for
new groups, we also need an exact test for a central $C_2$ factor.
This test computes a classification flag; a positive flag is not a reason
to reject a group from the census.
\begin{lemma}[Detecting a central direct factor]\label{lem:c2-factor}
A finite $2$-group $G$ has a direct factor isomorphic to $C_2$ if and
only if $\Omega_1(Z(G))\nleq\Phi(G)$.
\end{lemma}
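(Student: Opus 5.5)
We prove the two implications separately. Throughout, $\Phi(G)$ is the Frattini subgroup, so $G/\Phi(G)$ is an elementary abelian $2$-group, which we regard as an $\F_2$-vector space. The only tool needed is the basis property of Frattini quotients (Burnside's basis theorem): a subgroup $H\le G$ satisfies $H\Phi(G)=G$ only if $H=G$. Equivalently, a subset of $G$ generates $G$ if and only if its image spans $G/\Phi(G)$.

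For the forward direction, suppose $G=H\times C$ with $C=\langle c\rangle\cong C_2$. Then $c$ is central of order $2$, so $c\in\Omega_1(Z(G))$. We claim $c\notin\Phi(G)$. The subgroup $H$ is a maximal subgroup of $G$, since it is normal of index $2$. The Frattini subgroup is the intersection of all maximal subgroups, so $\Phi(G)\le H$. Since $c\notin H$, it follows that $c\notin\Phi(G)$. Hence $\Omega_1(Z(G))\nleq\Phi(G)$. (Alternatively, $\Phi(H\times C)=\Phi(H)\times\Phi(C)=\Phi(H)\times1$.)

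For the converse, pick $z\in\Omega_1(Z(G))$ with $z\notin\Phi(G)$. Then $z\neq1$, $z^2=1$ and $z$ is central, so $\langle z\rangle\cong C_2$ is a normal subgroup. The image $\bar z$ of $z$ in $V=G/\Phi(G)$ is nonzero.
\begin{enumerate}
\item Choose a hyperplane $W\le V$ with $\bar z\notin W$, and let $M$ be its preimage in $G$. Then $M$ is a subgroup of index $2$ containing $\Phi(G)$, hence normal in $G$, and $z\notin M$.
\item Since $\langle z\rangle\cap M=1$ and both subgroups are normal, the product $M\langle z\rangle$ is an internal direct product. It is strictly larger than $M$, so it equals $G$ by index.
\end{enumerate}
Therefore $G=M\times\langle z\rangle\cong M\times C_2$, as required.

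The only point needing care is the choice of $M$. The complement must be normal and must avoid $z$, and taking the preimage of a hyperplane of the Frattini quotient supplies both properties at once. Everything else is routine group theory. No use of $\Omega_1$ beyond $z^2=1$ is needed, and the argument works verbatim for any finite $p$-group with $C_p$ in place of $C_2$.
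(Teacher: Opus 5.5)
Your proof is correct and follows essentially the same argument as the paper. In both directions you use that a maximal subgroup of a finite $2$-group is normal of index two and contains $\Phi(G)$. Your hyperplane of $G/\Phi(G)$ is simply an explicit way of producing the maximal subgroup avoiding $z$, which the paper obtains from $\Phi(G)$ being the intersection of the maximal subgroups.
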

\begin{proof}
A central involution $z\notin\Phi(G)$ is avoided by some maximal
subgroup $M$. Since $|G:M|=2$ and $z$ is central,
$G=M\times\langle z\rangle$. Conversely, a generator of a direct
$C_2$ factor is a central involution avoided by the complementary
maximal subgroup, and hence does not belong to $\Phi(G)$.
\end{proof}

\subsection{An exact SR test}\label{subsec:exact-sr-test}
The structural restrictions are necessary but not sufficient. The final
acceptance decision uses the following character identity, which replaces
all tensor-product multiplicity checks by a single equality of integers.
For $g\in G$ write $r_2(g)=|\{x:x^2=g\}|$ and $c(g)=|C_G(g)|$.
\begin{thm}[Wigner {\cite[Theorem~2]{Wig41}}]\label{thm:wigner}
For every finite group,
\[
 \sum_{g\in G}r_2(g)^3\le\sum_{g\in G}c(g)^2,
\]
with equality if and only if $G$ is SR.
\end{thm}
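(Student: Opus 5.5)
Both sides of the inequality in Theorem~\ref{thm:wigner} can be written as sums over triples of irreducible characters, and the comparison then reduces to a pointwise inequality between nonnegative integers. Put $m(\chi,\psi,\theta)=\langle\chi\psi\theta,1\rangle_G$, the multiplicity of the trivial character in $\chi\psi\theta$.

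For the right-hand side, the class function $c(g)=\sum_{\chi\in\Irr(G)}|\chi(g)|^2$ is real and nonnegative. Hence
\[
\sum_g c(g)^2=\sum_{\chi,\psi}\sum_g|\chi(g)|^2|\psi(g)|^2=|G|\sum_{\chi,\psi}\langle\chi\psi,\chi\psi\rangle_G=|G|\sum_{\chi,\psi,\theta}\langle\chi\psi,\theta\rangle_G^2 .
\]

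For the left-hand side, the Frobenius--Schur count gives $r_2(g)=\sum_\chi\nu(\chi)\chi(g)$, where $\nu(\chi)\in\{0,\pm1\}$ is the Frobenius--Schur indicator. Cubing and summing over $g$ yields
\[
\sum_g r_2(g)^3=|G|\sum_{\chi,\psi,\theta}\nu(\chi)\nu(\psi)\nu(\theta)\,m(\chi,\psi,\theta).
\]
Each term on the right is at most $m(\chi,\psi,\theta)$, and equality in every term forces $\nu\neq0$ on every character that occurs in some nonzero term.

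Next, compare $m$ with the squared multiplicities. Using $\langle\chi\psi,\bar\theta\rangle=m(\chi,\psi,\theta)$, the right-hand sum equals $\sum m(\chi,\psi,\theta)^2$, reindexing $\theta\mapsto\bar\theta$. Since $m$ is a nonnegative integer, $m\le m^2$, with equality exactly when $m\in\{0,1\}$. Putting the two bounds together,
\[
\sum_g r_2(g)^3\le|G|\sum m\le|G|\sum m^2=\sum_g c(g)^2,
\]
which is the inequality.

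For the equality case, note that triples $(\chi,\bar\chi,1)$ have $m=1>0$, so equality in the first step forces $\nu(\chi)\neq0$ for all $\chi$. That means every irreducible character is real, which is ambivalence. Equality in the second step forces all multiplicities to lie in $\{0,1\}$; once characters are real, $\langle\chi\psi,\theta\rangle=m(\chi,\psi,\theta)$, so this is condition~(2). Conversely, if $G$ is SR then every $\nu(\chi)=\pm1$ and every $m\in\{0,1\}$, and the first step still needs $\nu(\chi)\nu(\psi)\nu(\theta)=1$ whenever $m\neq0$.

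\textbf{The main obstacle} is exactly this sign condition: showing that for an SR group no triple with $m=1$ has indicator product $-1$. For a triple of three distinct characters there is no a priori reason the signs cancel, so I would not rely on the termwise bound alone. Instead I would restructure the left-hand side by considering the action of $\mathrm{Sym}_3$ on the one-dimensional spaces $\Hom_G(1,U\otimes V\otimes W)$ together with the real or quaternionic structures on $U,V,W$. The parity of the resulting invariant bilinear forms should force the product of indicators to be $+1$ when the space is one-dimensional. This is Wigner's original argument, and I expect it to be the delicate step. Failing that, I would cite \cite[Theorem~2]{Wig41} for this direction.
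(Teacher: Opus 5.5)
The paper gives no proof of this theorem; it simply cites \cite{Wig41}. The closest in-paper argument is Corollary~\ref{cor:ambivalence-moment}, which uses the same Frobenius--Schur expansion $r_2=\sum_\chi\nu(\chi)\chi$ that you use.

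\textbf{What you have proved.} Your inequality is correct and complete: $\sum_g r_2(g)^3\le|G|\sum m\le|G|\sum m^2=\sum_g c(g)^2$. The implication ``equality $\Rightarrow$ SR'' is also complete: the triples $(\chi,\bar\chi,1)$ force every $\chi$ to be real, and $m=m^2$ forces every multiplicity into $\{0,1\}$.

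\textbf{The gap.} The converse, SR $\Rightarrow$ equality, is not proved. For an SR group your second inequality is an equality. The first is an equality only if $\nu(\chi)\nu(\psi)\nu(\theta)=1$ whenever $\langle\chi\psi,\theta\rangle=1$, and you leave this open. Your proposed route does not work as stated. A permutation of the tensor factors preserves $\Hom_G(1,U\otimes V\otimes W)$ only if it permutes isomorphic modules, so for distinct characters there is no $\mathrm{Sym}_3$-action to exploit. Falling back on \cite{Wig41} leaves half of the equivalence unproved.

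\textbf{The missing idea.} Let $B_U,B_V,B_W$ be nonzero invariant bilinear forms on the self-dual modules affording $\chi,\psi,\theta$. Each is unique up to scalar, and $B_U(y,x)=\nu(\chi)B_U(x,y)$, and similarly for $B_V,B_W$.
\begin{itemize}
\item $B=B_U\otimes B_V$ is a nondegenerate invariant form on $U\otimes V$ with $B(y,x)=\nu(\chi)\nu(\psi)B(x,y)$.
\item Restricted to isotypic components $X_\alpha\times X_\beta$, $B$ defines an element of $\Hom_G(X_\alpha,X_\beta^*)$. So it vanishes unless $\beta\cong\alpha^*$.
\item Since $W\cong W^*$, the $W$-isotypic component $X_W$ is $B$-orthogonal to all other components. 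Nondegeneracy of $B$ then forces $B|_{X_W}$ to be nondegenerate, in particular nonzero.
\item Multiplicity one gives $X_W\cong W$. Hence $B|_{X_W}$ is a nonzero multiple of $B_W$ and has symmetry sign $\nu(\theta)$.
\item It also inherits the sign $\nu(\chi)\nu(\psi)$ from $B$. A nonzero form cannot carry both signs, so $\nu(\theta)=\nu(\chi)\nu(\psi)$.
\end{itemize}
Distinctness of $\chi,\psi,\theta$ plays no role here, contrary to your concern. With this lemma inserted, every term of your first sum equals $m$ when $G$ is SR, and the equivalence follows.
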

The preliminary ambivalence check can be expressed in the same square-root
counts. We record the identity used by the descendant and export programs.
\begin{corl}[The quadratic moment test]\label{cor:ambivalence-moment}
For every finite group $G$,
\[
 \sum_{g\in G}r_2(g)^2
 =|G|\,|\{\chi\in\Irr(G):\chi=\overline\chi\}|.
\]
Consequently $G$ is ambivalent if and only if this sum equals $|G|k(G)$,
where $k(G)$ denotes the number of conjugacy classes.
\end{corl}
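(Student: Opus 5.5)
The plan is to expand $r_2$ through the Frobenius--Schur indicator and then apply the orthogonality relations. For $\chi\in\Irr(G)$ write $\nu(\chi)=|G|^{-1}\sum_{x\in G}\chi(x^2)\in\{0,\pm1\}$. The classical Frobenius--Schur count gives, for every $g\in G$,
\[
 r_2(g)=\sum_{\chi\in\Irr(G)}\nu(\chi)\chi(g).
\]
Because $r_2$ is a class function, we can compute $\sum_g r_2(g)^2$ as $|G|\langle r_2,\overline{r_2}\rangle_G$. The function $r_2$ is integer-valued, hence real, so $\overline{r_2}=r_2$ and the sum equals $|G|\langle r_2,r_2\rangle_G$.

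Next we substitute the expansion and use the first orthogonality relation:
\[
 \langle r_2,r_2\rangle_G=\sum_{\chi,\psi}\nu(\chi)\overline{\nu(\psi)}\langle\chi,\psi\rangle_G=\sum_{\chi}\nu(\chi)^2 .
\]
The indicator $\nu(\chi)$ is nonzero exactly when $\chi=\overline\chi$, and in that case $\nu(\chi)^2=1$. Therefore $\sum_\chi\nu(\chi)^2=|\{\chi\in\Irr(G):\chi=\overline\chi\}|$, and multiplying by $|G|$ gives the identity. The step that needs the most care is the Frobenius--Schur input, namely the expansion of $r_2$ together with the trichotomy $\nu(\chi)\in\{1,-1,0\}$ according to whether $\chi$ is real, quaternionic, or non-real. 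I would quote both facts as standard rather than reprove them, because everything after them is bookkeeping.

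For the consequence, note that $|\Irr(G)|=k(G)$. The sum therefore equals $|G|k(G)$ exactly when every irreducible character is real-valued. As the paper recalls after Definition~\ref{def:SR}, this is equivalent to ambivalence: the number of real irreducible characters equals the number of real conjugacy classes, by Brauer's permutation lemma applied to complex conjugation acting on the character table. In general the sum is at most $|G|k(G)$, with equality precisely when $G$ is ambivalent.
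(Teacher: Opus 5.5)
Your proof is correct and is essentially the paper's argument: expand $r_2=\sum_{\chi}\nu(\chi)\chi$ via Frobenius--Schur, apply orthogonality to get $|G|\sum_{\chi}\nu(\chi)^2$, and then identify ``all irreducible characters are real'' with ambivalence. The only difference is in the last step. You cite Brauer's permutation lemma, whereas the paper uses the more elementary fact that irreducible characters separate conjugacy classes: if every $\chi$ is real, then $\chi(g^{-1})=\overline{\chi(g)}=\chi(g)$ for all $\chi$, which forces $g\sim g^{-1}$. Either argument suffices.
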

\begin{proof}
The Frobenius--Schur formula gives
$r_2=\sum_{\chi\in\Irr(G)}\varepsilon_\chi\chi$,
where $\varepsilon_\chi\in\{0,\pm1\}$ and
$\varepsilon_\chi^2=1$ precisely when $\chi$ is real-valued
\cite{Isa76}. Orthogonality gives the stated sum.
All irreducible characters are real-valued precisely when every
conjugacy class is fixed by inversion, because irreducible characters
separate conjugacy classes.
\end{proof}
Both tests use exact integers. If $C$ runs over the conjugacy classes,
then the right side of Wigner's equality can be computed as
$\sum_C |C|(|G|/|C|)^2$. The square map on classes computes the left
side without enumerating all triples: for each source class $C$, add
$|C|/|D|$ to the square-root count of the class $D$ containing $x^2$,
where $x\in C$. Conjugation-equivariance of squaring justifies this ratio.
Character-table checks provide an independent implementation of the definition.
The exact census filter is Algorithm~\ref{alg:srtest}.

\subsection{Catalogue traversal and computational evidence}\label{subsec:catalogue-evidence}
For every order $n\leq2000$ other than $1024$, the SmallGroups
library supplies a complete list of pairwise nonisomorphic groups
\cite{SmallGrp,BEO01,BEO2002}. We traverse these lists in GAP
\cite{GAP} and retain precisely the groups satisfying the exact SR test
of Theorem~\ref{thm:wigner}. The catalogue implementation first rejects any conjugacy class that is
not fixed by inversion, then applies the cubic-moment equality to the
survivors. Thus the necessary test precedes the exact acceptance test,
as in Subsection~\ref{subsec:exact-sr-test}.
Algorithm~\ref{alg:catalogue} gives the pseudocode corresponding to
\path{sr_search2_2000.g}; Algorithm~\ref{alg:srtest} gives the filter.
The identifier $(n,i)$ in every accepted row refers to
\texttt{SmallGroup(n,i)}. A structure description is supplied for
convenience; classification and reconstruction use the identifier,
not the description.

The two largest catalogue orders were computed separately:
there are $10\,494\,213$ groups of order $512$ and
$408\,641\,062$ groups of order $1536$. The scripts
\path{sr_search512.g} and \path{sr_search1536.g} apply the
same test to these complete lists. The catalogue computations were
carried out over three weeks using four remote servers and three
local computers. The resulting files have the following sizes.
\begin{center}\small
\begin{tabular}{@{}lrr@{}}\toprule
File & Orders & SR groups\\\midrule
\path{SR_groups_results2_2000.csv}
  & other even orders &6210\\
\path{SR_groups_results512.csv}&512&317\\
\path{SR_groups_results1536.csv}&1536&559\\\bottomrule
\end{tabular}
\end{center}
The three files have disjoint sets of order--identifier pairs.
Their union contains $7086$ isomorphism types at the $999$ even
orders other than $1024$. These records form the catalogue part of
Theorem~\ref{thm:main-classification}; the counts at individual orders
are listed in Appendix~\ref{sec:counttable}.

\medskip\noindent\textbf{Why order 1024 is treated separately.}
Order $1024$ is the sole exception to the availability of group
objects in this range. A numerical value of
\texttt{NumberSmallGroups(1024)} does not provide a list
\texttt{SmallGroup(1024,i)}; the library's coverage statement
explicitly excludes that order \cite{SmallGrp}. Consequently,
order $1024$ cannot be filled by the catalogue traversal. There are $49\,487\,367\,289$ groups of this order \cite{Burrell22}, so constructing every group and filtering afterwards would also be impractical.
We instead construct its SR-groups: canonical-parent generation
handles nilpotency class at least three, and quadratic maps handle
class two. The elementary abelian group accounts for class one.
These disjoint constructions are described in the following sections.

\section{Order 1024: canonical descendants of class at least three}
\label{sec:1024higher}
Let $P_0(G)=G$ and
$P_{i+1}(G)=P_i(G)^2[P_i(G),G]$ be the lower exponent-$2$
central series. If $c$ is the least index with $P_c(G)=1$, then
$c$ is the $2$-class of $G$, and its canonical parent is
$G/P_{c-1}(G)$. This quotient is intrinsic to $G$.
It is SR whenever $G$ is SR, by Proposition~\ref{prop:closure}.
We distinguish this series from the ordinary lower central series;
``class'' without a qualifier below means nilpotency class.

\begin{lemma}\label{lem:parent}
An immediate descendant of an elementary abelian $2$-group has
nilpotency class at most two. Every ambivalent $2$-group of
nilpotency class two has $2$-class two and an elementary abelian
canonical parent.
\end{lemma}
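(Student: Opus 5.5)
The plan is to work directly with the lower exponent-$2$ central series and the defining property of an immediate descendant. For the first assertion, let $H$ be an immediate descendant of an elementary abelian $2$-group $E$. By definition, $H$ has $2$-class one more than $E$, here $2$-class two, and $H/P_1(H)\cong E$; moreover $P_2(H)=1$. Since $P_1(H)=H^2[H,H]=\Phi(H)$ and $P_2(H)=P_1(H)^2[P_1(H),H]=1$, we get $[\Phi(H),H]=1$, so $\Phi(H)\le Z(H)$. Then $H'\le\Phi(H)\le Z(H)$, which forces nilpotency class at most two. (If $E$ is trivial the descendant is $C_2$, and the claim is immediate.)

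For the second assertion, let $G$ be an ambivalent $2$-group of nilpotency class two, so $G'\le Z(G)$ and $G'\ne1$. By Proposition~\ref{prop:elementary}, both $Z(G)$ and $G/G'$ are elementary abelian. The first fact gives $P_1(G)=G^2G'=G'$, since $g^2\in G'$ for all $g$. Hence the canonical-parent candidate $G/P_1(G)=G/G'$ is elementary abelian.

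It remains to show $P_2(G)=1$ while $P_1(G)\ne1$. The second part holds because $G$ is nonabelian, so $G'\ne1$. For the first, $P_2(G)=(G')^2[G',G]$. Here $[G',G]=1$ because $G$ has class two, and $(G')^2=1$ because $G'\le Z(G)$ and $Z(G)$ is elementary abelian. Thus the $2$-class is exactly two. The canonical parent is therefore $G/P_{1}(G)=G/G'$, which is elementary abelian, as required.

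The only delicate point is the identity $P_1(G)=G'$. It relies on $G/G'$ having exponent two, which comes from ambivalence via Proposition~\ref{prop:elementary}. Without ambivalence, $G^2$ could strictly contain $G'$, and the $2$-class could exceed two. I expect no genuine obstacle beyond keeping the two central series distinct.
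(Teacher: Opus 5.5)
Your proof is correct and follows the paper's argument: $P_2=1$ together with $G'\le P_1$ forces $[G',G]=1$, and for an ambivalent group of class two one has $G^2\le G'\le Z(G)$ with $Z(G)$ elementary abelian, so $P_1(G)=G'\ne1$ and $P_2(G)=(G')^2[G',G]=1$. The only slip is in your parenthetical remark: every nontrivial elementary abelian $2$-group, not just $C_2$, is an immediate descendant of the trivial group, but the class bound is immediate in that case anyway.
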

\begin{proof}
An elementary abelian parent has $2$-class one, so its immediate
descendant satisfies $P_2(G)=1$. Since $G'\leq P_1(G)$, this
implies $[G',G]=1$. Conversely, in an ambivalent group,
$G/G'$ and $Z(G)$ are elementary abelian
(Proposition~\ref{prop:elementary}). If its nilpotency class is
two, then $G^2\leq G'\leq Z(G)$, whence $P_1(G)=G'$ and
$P_2(G)=1$.
\end{proof}

It follows that all SR-groups of order $1024$ and class at least
three are obtained by taking the immediate descendants of all
nonabelian SR-groups of order $2^k$, for $3\leq k\leq9$, with
step size $s=10-k$. The step size specifies that the descendant
has order $2^s$ times the order of its parent. A nonabelian parent
has order at least $8$, which explains the range $1\leq s\leq7$.
No restriction to parents of order $512$ is justified: a last
nontrivial lower exponent-$2$ central term may have order larger
than two.

\subsection{Generation and completion records}
The program \path{scriptG.g} implements this search using
\texttt{PqDescendants} from ANUPQ \cite{ANUPQ}.
The default class bound is one above the parent's $2$-class, and
both capable and terminal descendants are included. No exponent,
metabelian or capability restriction is imposed. The
$p$-group generation algorithm constructs one representative of
each automorphism orbit of allowable subgroups of the
$p$-multiplicator; these orbits are exactly the isomorphism types
of immediate descendants \cite[Sections 2.1 and 2.3]{ANUPQ}.
Descendants belonging to different parent isomorphism types
cannot be isomorphic, since their canonical parents are intrinsic.
Thus this procedure provides exact isomorphism control without
using numerical invariants to identify groups.
Algorithm~\ref{alg:descendants} gives its pseudocode.

For each parent--step job, the program writes each accepted
presentation code as a record tagged \texttt{SRHIT} in its checkpoint,
and writes a
\texttt{DONE} line only after scanning the entire returned
descendant list. The final checkpoint
\path{scriptG_results.FINAL.bak} has precisely $539$ distinct
\texttt{DONE} records, one for every required job, and no failed
job. The accepted counts in those records agree exactly with the
$581$ \texttt{SRHIT} lines. The distribution is
\begin{center}\small
\begin{tabular}{@{}rrrrr@{}}\toprule
Parent order&Step&Jobs&Descendants tested&SR descendants\\\midrule
512&1&316&51\,886&474\\
256&2&129&798\,367&105\\
128&3&53&2\,317\,492&2\\
64&4&24&0&0\\
32&5&11&0&0\\
16&6&4&0&0\\
8&7&2&0&0\\\midrule
Total&&539&3\,167\,745&581\\\bottomrule
\end{tabular}
\end{center}
The completion check includes the $412$ jobs with no SR output,
as well as the $127$ jobs with at least one accepted descendant.
The console transcript \path{scriptG_console.FINAL.bak}
records the successful preliminary character-table comparisons
and the final summary. The complete checkpoint retains jobs from
earlier runs as well as the final resumed run.

\begin{prop}[Computer-assisted class-at-least-three census]
\label{prop:census1024}
There are exactly $581$ SR-groups of order $1024$ and nilpotency
class at least three. Their distribution by class is
\begin{center}
\begin{tabular}{@{}r*{7}{r}@{}}\toprule
Class&3&4&5&6&7&8&9\\\midrule
Number&373&133&48&16&7&2&2\\\bottomrule
\end{tabular}
\end{center}
\end{prop}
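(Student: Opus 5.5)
The plan is to split the statement into a completeness part, which is a theorem, and a finite-computation part, which the checkpoint records certify. For completeness, let $G$ be an SR-group of order $1024$ and nilpotency class $c\ge3$. By Proposition~\ref{prop:elementary}, $G$ is a $2$-group with elementary abelian $G/G'$. Its $2$-class is then at least $3$: by Lemma~\ref{lem:parent}, a $2$-group of $2$-class at most two has class at most two. Hence the canonical parent $H=G/P_{c'-1}(G)$, where $c'$ is the $2$-class of $G$, has $2$-class at least two, so $H$ is nonabelian. By Proposition~\ref{prop:closure}, $H$ is SR. Its order is $2^k$ with $3\le k\le 9$, and $G$ is an immediate descendant of $H$ of step $s=10-k$. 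The lower bound $k\ge3$ holds because nonabelian $2$-groups have order at least $8$, and $k\le9$ because $P_{c'-1}(G)\ne1$.

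Consequently every such $G$ appears among the immediate descendants of some nonabelian SR-group of order $2^k$, $3\le k\le 9$, at step $10-k$. The parents are exactly the nonabelian entries of the catalogue files at orders $8,\dots,512$, which the census of Section~\ref{sec:census} certifies as complete. Counting the nonabelian entries at each order gives the $539$ jobs in the table.

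Next comes isomorphism control. The canonical parent is intrinsic, so descendants of nonisomorphic parents are nonisomorphic. Within one parent and step, the $p$-group generation algorithm returns one representative per automorphism orbit of allowable subgroups, and these orbits correspond bijectively to isomorphism types \cite{ANUPQ}. Therefore the union of the accepted descendants over all jobs is an irredundant list, and no further isomorphism testing is needed.

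It remains to check the filter and the bookkeeping. Each returned descendant is tested with the exact criterion of Theorem~\ref{thm:wigner}, preceded by the ambivalence test of Corollary~\ref{cor:ambivalence-moment}. The count is $581$, provided all $539$ jobs completed; the checkpoint's \texttt{DONE} records and the agreement of the per-job totals with the \texttt{SRHIT} lines confirm this. Some accepted descendants might have class at most two, for instance from small parents. That cannot happen: by the first step every accepted descendant has $2$-class at least $3$, and by Lemma~\ref{lem:parent} an ambivalent group of class two has $2$-class two, so its class is at least $3$. The class distribution is obtained by computing the lower central series of each of the $581$ reconstructed presentations.

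I expect the main obstacle to be not the logic but the trustworthiness of the computation. This covers correct input parent lists, including order $512$; the default class bound, which must not silently exclude descendants; and the inclusion of terminal descendants. I would address it by recomputing the SR test with an independent character-table check on every hit, and by rerunning a sample of jobs.
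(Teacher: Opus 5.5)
Your proposal is correct and follows the same route as the paper. Quotient closure and Lemma~\ref{lem:parent} place every target among the step-$(10-k)$ immediate descendants of the $539$ nonabelian SR parents. The orbit construction of $p$-group generation gives completeness and irredundancy within each job, and the intrinsic canonical parent separates jobs; the exact SR filter together with the completion records then yields $581$.

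One ordering point: the inference that $H$ is nonabelian because its $2$-class is at least two needs $H$ to be SR first. The group $C_4$ has $2$-class two but is abelian; for SR groups the step works because an abelian SR group is elementary abelian and so has $2$-class one. You should therefore invoke Proposition~\ref{prop:closure} before drawing that conclusion.
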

\begin{proof}
Quotient closure and Lemma~\ref{lem:parent} place every possible
group in the stated job roster. The complete catalogue lists of
smaller SR-groups supply all parents. The checked completion
records cover that roster exactly. The descendant algorithm's
orbit construction gives completeness and nonredundancy within
each job, and uniqueness of the canonical parent gives
nonredundancy between jobs. The exact SR filter leaves $581$
representatives, with the displayed class distribution.
\end{proof}

\subsection{Reconstructible classification records}
The program \path{makeCSV.g} reconstructs every accepted group
from its polycyclic presentation code, repeats the order and SR
tests, verifies that its class is at least three, and writes
\path{sr1024_class3plus.csv}.
Algorithm~\ref{alg:exportdescendants} describes this export.
All $581$ rows agree with the checkpoint's parent, step and code;
the group invariants have also been independently recomputed.
Each row records its parent order and catalogue identifier, step,
class, exponent, centre order, derived length, number of conjugacy
classes, a central $C_2$ direct-factor flag, and a presentation
code. These are explicit representatives: a row with code $a$
is reconstructed by \texttt{PcGroupCode(a,1024)}.
The code is a presentation encoding, not a canonical isomorphism
identifier; the generation argument above supplies the
nonisomorphism proof.
For example, the first row is recovered in GAP by
\begin{lstlisting}
a := Int("23429787501007350394706814058502769492323970612862");;
G := PcGroupCode(a,1024);;
[Size(G), NilpotencyClassOfGroup(G), Exponent(G)];
# [ 1024, 3, 8 ]
\end{lstlisting}
The second argument specifies the order of the encoded group
\cite[Section~46.9]{GAP}. The decimal code must be retained as an exact
integer. The supplied reader \path{read_sr1024_class3plus.g} reads
this column directly from the CSV and reconstructs any selected row;
its use is described in Appendix~\ref{app:implementation}.

Among these representatives, $221$ have a direct $C_2$ factor
and $360$ do not. The subtotal $474$ consists precisely
of the groups with canonical parent of order $512$; the
additional $107$ arise from the smaller canonical parents.

\section{Class-two groups and quadratic maps}\label{sec:class2}

We reduce the class-two stratum at order $2^{10}$ to a finite orbit
problem and give exact tests for the two conditions in the definition of
the SR conditions. The subsequent orbit computations complete this
stratum from the order-512 census.

\begin{lemma}[Central splitting]\label{lem:class2-split-new}
Let $G$ be a nonabelian SR $2$-group of nilpotency class two. Then
$G/G'$ and $Z(G)$ are elementary abelian, $\Phi(G)=G'$, and
\[
 G\cong S\times C_2^j,
 \qquad Z(S)=S'=\Phi(S),
\]
where $S$ is a class-two SR group. The isomorphism type of $S$ and the
integer $j$ are uniquely determined by $G$.
\end{lemma}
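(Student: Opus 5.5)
We begin with the elementary facts and then split off central involutions outside the commutator subgroup. By Proposition~\ref{prop:elementary}, both $G/G'$ and $Z(G)$ are elementary abelian. Since $G/G'$ has exponent two, $G^2\le G'$. Hence $\Phi(G)=G^2G'=G'$. Class two gives $G'\le Z(G)$. Also $Z(G)$ is elementary abelian, so $\Omega_1(Z(G))=Z(G)$.

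\emph{Splitting.} We argue by induction on $|G|$.

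If $Z(G)\le\Phi(G)=G'$, then $Z(G)=G'=\Phi(G)$. We take $S=G$ and $j=0$.

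Otherwise Lemma~\ref{lem:c2-factor} gives $G=M\times\langle z\rangle$, where $z$ is a central involution and $M$ is a maximal subgroup. By Proposition~\ref{prop:closure}, $M$ is SR. It is nonabelian of class two, since $M'=G'\ne1$. Thus the induction hypothesis applies to $M$, and we obtain $G\cong S\times C_2^{j}$. The group $S$ is a class-two SR group with $Z(S)=S'=\Phi(S)$; here $S$ is SR by Proposition~\ref{prop:closure}, and $S'=G'\neq1$.

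\emph{Uniqueness.} This is the step I expect to need the most care, because Krull--Schmidt has to be used correctly.

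First, $j$ is intrinsic. From $Z(G)=Z(S)\times C_2^j$ and $\Phi(G)=\Phi(S)\times1=Z(S)$ we get $|Z(G):\Phi(G)|=2^j$. So $j=\log_2|Z(G):G'|$.

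For $S$, apply the Krull--Schmidt theorem to a decomposition of $S$ into indecomposable factors. No indecomposable factor of $S$ is isomorphic to $C_2$. Indeed, a $C_2$ factor would be generated by a central involution outside $\Phi(S)$, by Lemma~\ref{lem:c2-factor}, and this contradicts $Z(S)=\Phi(S)$.

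Hence in the indecomposable decomposition of $G$, the $C_2$ factors number exactly $j$. The remaining factors are determined up to isomorphism, and their product is $S$. Therefore $S$ is unique up to isomorphism.

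Alternatively, one can avoid Krull--Schmidt with a cancellation argument. Suppose $S\times C_2^j\cong S_1\times C_2^j$ with $S,S_1$ both satisfying the displayed condition. Then $S\cong G/Z(G)$-extensions is not immediate, so I would rely on Krull--Schmidt, which holds for finite groups.
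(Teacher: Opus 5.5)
Your proof is correct and follows essentially the paper's route. You derive $\Phi(G)=G'$ from the elementary abelian restrictions, repeatedly split off a central involution outside $G'$ via a maximal subgroup avoiding it, and get uniqueness from Krull--Remak--Schmidt because $Z(S)\le\Phi(S)$ rules out $C_2$ factors of $S$; your explicit formula $j=\log_2|Z(G):G'|$ is a small bonus, but delete the garbled final ``alternatively'' paragraph, which does not parse and contributes nothing.
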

\begin{proof}
Ambivalence passes to $G/G'$, so this abelian quotient has exponent at
most two. A central element is conjugate only to itself, so every central
element has order at most two. Consequently $G^2\le G'$ and
$\Phi(G)=G^2G'=G'$. If $z\in Z(G)\setminus G'$, a maximal subgroup $K$
avoiding $z$ gives $G=K\times\langle z\rangle$. The factor $K$ is SR,
being a quotient of $G$. Iterate until the center equals the derived
subgroup. Uniqueness follows from the Krull--Remak--Schmidt theorem:
the remaining factor has no direct factor of order two, since its center
is contained in its Frattini subgroup.
\end{proof}

We call a group with $Z(S)=S'=\Phi(S)$ a \emph{special} $2$-group; in
this section it is also the stem factor of the central splitting.
Write
\[
 V=S/S'\cong\mathbb F_2^m,\qquad W=S'\cong\mathbb F_2^r.
\]
Commutators and squares define
\[
 \beta:\bigwedge^2V\longrightarrow W,
 \quad \beta(\bar x,\bar y)=[x,y],
 \qquad q:V\longrightarrow W,\quad q(\bar x)=x^2.
\]
Here and below $W$ is written additively. Since $W$ is central of
exponent two, these definitions are independent of the chosen lifts,
and
\begin{equation}\label{eq:polar-new}
 q(u+v)=q(u)+q(v)+\beta(u,v).
\end{equation}
The map $\beta$ is surjective as a linear map on $\bigwedge^2V$, and its
common radical is zero.

Conversely, the following construction recovers a group directly from
the linear-algebra data. Let $V=\mathbb F_2^m$ and $W=\mathbb F_2^r$,
and let $q:V\to W$ have polar map $\beta$. Choose a basis
$e_1,\ldots,e_m$ of $V$, and write $q_i=q(e_i)$ and
$b_{ij}=\beta(e_i,e_j)$ for $i<j$. Polarization gives
\[
 q(v)=\sum_i v_iq_i+\sum_{i<j}v_iv_jb_{ij}.
\]
Define a bilinear map $c:V\times V\to W$ by
\[
 c(u,v)=\sum_i u_iv_iq_i+\sum_{i<j}u_jv_i b_{ij},
\]
and equip the set $V\times W$ with the multiplication
\begin{equation}\label{eq:class2-construction}
 (u,w)(v,z)=(u+v,w+z+c(u,v)).
\end{equation}
We denote this group by $S(\beta,q)$. To check the construction,
bilinearity gives
\[
 c(u,v)+c(u+v,t)=c(v,t)+c(u,v+t),
\]
which is precisely associativity of
\eqref{eq:class2-construction}. The identity is $(0,0)$, the inverse
of $(v,w)$ is $(v,w+q(v))$, and direct calculation yields
\[
 (v,w)^2=(0,q(v)),\qquad
 [(u,w),(v,z)]=(0,\beta(u,v)).
\]
Consequently, if $\beta:\bigwedge^2V\to W$ is surjective and has
zero common radical, then
\[
 |S(\beta,q)|=2^{m+r},\qquad
 S(\beta,q)'=Z(S(\beta,q))=\{0\}\times W.
\]
All squares lie in this derived subgroup, so the Frattini subgroup
is also $\{0\}\times W$. Thus the construction produces a special
group with exactly the prescribed square and commutator maps.

\begin{prop}[Quadratic classification dictionary]
\label{prop:class2-dictionary-new}
Special $2$-groups with elementary abelian center and invariants $(m,r)$
correspond to $\mathrm{GL}(V)\times\mathrm{GL}(W)$-orbits of quadratic
maps $q:V\to W$ whose polar map is surjective and has zero common
radical. A pair $(A,B)$ identifies $q$ and $q'$ when
$q'(Av)=Bq(v)$ for every $v\in V$.
\end{prop}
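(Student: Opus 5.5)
The plan is to build maps in both directions between isomorphism classes of special $2$-groups with invariants $(m,r)$ and $\GL(V)\times\GL(W)$-orbits of admissible quadratic maps, and then show that each map respects the relevant equivalence and that the two compose to the identity.

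\emph{Group to orbit.} Start from a special $2$-group $S$ with $|S/S'|=2^m$ and $|S'|=2^r$. Fix identifications $V=S/S'\cong\F_2^m$ and $W=S'\cong\F_2^r$, and take the square map $q$ defined before the statement; its polar map is $\beta$ by \eqref{eq:polar-new}. The map $\beta$ is surjective because $S'$ is generated by commutators of generators, and since $S'$ is central of exponent two, $\bigwedge^2V\to W$ hits a spanning set. Its common radical is zero because $v$ in the radical would lift to an element centralizing $S$, so $v$ would lie in $Z(S)/S'=0$. Changing the two identifications alters $q$ by $q'(Av)=Bq(v)$ for some $(A,B)$, so the orbit is well defined. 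An isomorphism $\phi:S\to S_1$ maps $S'$ onto $S_1'$ and induces linear maps $\bar\phi$ on $V$ and $\phi|_{S'}$ on $W$. Since $\phi(x)^2=\phi(x^2)$, the two square maps lie in the same orbit.

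\emph{Orbit to group.} Send $q$ to $S(\beta,q)$. The computation preceding the statement already shows that this group is special with invariants $(m,r)$ and that its square map is exactly $q$. Hence the composite ``orbit $\to$ group $\to$ orbit'' is the identity. We also need the construction to respect the group action: if $q'(Av)=Bq(v)$, then $S(\beta,q)\cong S(\beta',q')$. Since $S(\beta',q')$ is abstractly special with the same orbit, this will follow from the key step below.

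\emph{Key step (main obstacle).} We must show that a special $2$-group $S$ is determined up to isomorphism by its square map $q$. The plan is to realize $S$ as a central extension of $V$ by $W$ with factor set $c$ and compare it with $S(\beta,q)$. Choose lifts $x_1,\dots,x_m$ of a basis $e_1,\dots,e_m$, and define the normal-form section $s(v)=x_1^{v_1}\cdots x_m^{v_m}$. Using $x_i^2=q(e_i)$ and $x_jx_i=x_ix_j[x_j,x_i]$, with $W$ central, one collects the product $s(u)s(v)$. The result is $s(u+v)$ times $\sum_i u_iv_iq_i+\sum_{i<j}u_jv_ib_{ij}$, which is exactly the cocycle $c(u,v)$ defining $S(\beta,q)$ in \eqref{eq:class2-construction}. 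Here $q_i=q(e_i)$ and $b_{ij}=\beta(e_i,e_j)$, and these values determine $q$ by polarization. Consequently $(v,w)\mapsto s(v)w$ is an isomorphism $S(\beta,q)\to S$. The delicate part is carrying out this collection carefully, in particular checking that the carry terms from reordering and squaring match the convention $u_jv_i$ with $i<j$, with no extra sign issues, which is automatic in characteristic two.

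\emph{Conclusion.} Given the key step, if $q$ and $q'$ lie in the same orbit, transport the basis of $V$ and of $W$ by $(A,B)$. The normal-form isomorphisms then give $S(\beta,q)\cong S(\beta',q')$. Conversely, any special group $S$ is isomorphic to $S(\beta,q)$ for its own $q$. So the two maps are mutually inverse bijections between isomorphism classes and orbits, which proves the dictionary.
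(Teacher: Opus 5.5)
Your proof is correct and follows essentially the same route as the paper: choose lifts $x_i$ of a basis of $V$, observe that the structure constants $q_i=x_i^2$ and $b_{ij}=[x_i,x_j]$ pin down the group, and handle the orbit equivalence by transporting the bases along $(A,B)$. The only difference is in the mechanism. The paper writes down the presentation $x_i^2=q_i$, $[x_i,x_j]=b_{ij}$ with central $z_j$ of order two, and bounds its normal forms by $2^{m+r}$. It then maps this presented group onto an arbitrary special group and compares orders. You instead compute the factor set of the section $s(v)=x_1^{v_1}\cdots x_m^{v_m}$ directly; your collection does yield exactly $c(u,v)$, so $(v,w)\mapsto s(v)w$ is an explicit isomorphism $S(\beta,q)\to S$ without any appeal to presentations.
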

\begin{proof}
The construction above gives a group for every admissible pair
$(\beta,q)$. Equivalently, if $z_1,\ldots,z_r$ is a basis of $W$,
take central generators $z_j$ of order two and generators
$x_1,\ldots,x_m$ with
\[
 x_i^2=q_i,\qquad [x_i,x_j]=b_{ij}\quad(i<j),
\]
where the right-hand sides denote the corresponding products of the
$z_j$. Collection gives at most $2^{m+r}$ normal forms
$x_1^{a_1}\cdots x_m^{a_m}z_1^{d_1}\cdots z_r^{d_r}$ with binary
exponents. The model \eqref{eq:class2-construction} realizes all these
forms distinctly, so the presentation has exactly these elements.

For an arbitrary special group with elementary abelian center,
choose lifts of a basis of its central quotient. They satisfy this
presentation for the extracted $(\beta,q)$ and generate the group
together with its center. Since the orders agree, the resulting
surjective homomorphism from $S(\beta,q)$ is an isomorphism.

Finally, an isomorphism induces linear maps $A$ on the central quotient
and $B$ on the derived subgroup, and preservation of squares gives
$q'(Av)=Bq(v)$. Conversely, this identity also preserves the polar maps.
Send each $x_i$ to a lift of $Ae_i$ and act on the central generators
by $B$. The square and commutator relations are preserved, so the
presentation gives an isomorphism. Different choices of lifts do not
change these relations because the center has exponent two.
\end{proof}

Quadratic descriptions of special $2$-groups and their character theory
are developed in \cite{KK15}. We provide the arguments needed for the
tensor-product test explicitly.

For $\lambda\in W^*=\operatorname{Hom}(W,\mathbb F_2)$, put
\[
 B_\lambda=\lambda\circ\beta,\quad
 R_\lambda=\operatorname{rad}(B_\lambda),\quad
 \rho_\lambda=\operatorname{rank}(B_\lambda).
\]
Thus $R_0=V$ and $\rho_0=0$. Let $H_\lambda$ be the inverse image of
$R_\lambda$ in $S$.

\begin{lemma}[Character supports]\label{lem:class2-support-new}
An irreducible character $\chi$ with central character
$w\mapsto(-1)^{\lambda(w)}$ has degree
$d_\lambda=2^{\rho_\lambda/2}$. It vanishes outside $H_\lambda$, and
on $H_\lambda$ it equals $d_\lambda\theta_\chi$ for a linear character
$\theta_\chi$ of $H_\lambda$. There are exactly
$2^{m-\rho_\lambda}$ irreducible characters above $\lambda$.
\end{lemma}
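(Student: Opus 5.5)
The plan is to follow the standard analysis of characters of class-two groups with central, elementary abelian $W=S'=Z(S)$. Fix $\lambda\in W^*$ and let $\chi$ lie over the central character $\mu_\lambda(w)=(-1)^{\lambda(w)}$.

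\emph{Step 1: the radical is central modulo the kernel.} I would first show that $H_\lambda$ is exactly the preimage of the centre of $\bar S=S/\ker\lambda$. For $x,y\in S$ we have $[x,y]\in W$, and modulo $\ker\lambda$ this commutator is trivial exactly when $B_\lambda(\bar x,\bar y)=0$. Hence $H_\lambda/\ker\lambda=Z(S/\ker\lambda)$, because $\ker\lambda\le W$ is central. Since $\chi$ has $\ker\lambda$ in its kernel, it is a character of $\bar S$. Its restriction to $H_\lambda$ is therefore $\chi(1)\theta$ for a linear character $\theta$ of $H_\lambda$, by Schur's lemma applied to the preimage of the centre. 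This $\theta$ extends $\mu_\lambda$.

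\emph{Step 2: vanishing off $H_\lambda$.} Take $g\notin H_\lambda$. Then there is $y$ with $B_\lambda(\bar g,\bar y)=1$, so $g^y=g[g,y]$ with $\lambda([g,y])=1$. This gives
\[
 \chi(g)=\chi(g^y)=\chi(g)\,\mu_\lambda([g,y])=-\chi(g),
\]
so $\chi(g)=0$.

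\emph{Step 3: the degree.} Orthogonality gives $|S|=\sum_{g}|\chi(g)|^2$, and by Steps 1 and 2 the right-hand side equals $|H_\lambda|\chi(1)^2$. Since $|S:H_\lambda|=|V:R_\lambda|=2^{\rho_\lambda}$, we get $\chi(1)^2=2^{\rho_\lambda}$. Here $\rho_\lambda$ is even because $B_\lambda$ is alternating: $\beta(v,v)=[x,x]=0$. Hence $d_\lambda=2^{\rho_\lambda/2}$.

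\emph{Step 4: the count.} The characters over $\lambda$ are determined by their restrictions $d_\lambda\theta$, so distinct characters give distinct $\theta$. Each $\theta$ is a linear character of $H_\lambda$ extending $\mu_\lambda$, and the number of such extensions is at most $|H_\lambda/H_\lambda'W|$-many. For a direct count I would use the degree sum instead. The characters over $\lambda$ are the constituents of $\mu_\lambda^S$, and
\[
 \sum_{\chi\text{ over }\lambda}\chi(1)^2=|S:W|=2^m.
\]
Since every such $\chi$ has degree $d_\lambda$, there are exactly $2^m/2^{\rho_\lambda}=2^{m-\rho_\lambda}$ of them. The identity $\sum\chi(1)^2=|S:W|$ holds because $\mu_\lambda^S=\sum\chi(1)\chi$ by Frobenius reciprocity, using that $\chi_W=\chi(1)\mu_\lambda$ since $W$ is central.

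The main obstacle is Step 1, specifically the identification $H_\lambda/\ker\lambda=Z(S/\ker\lambda)$. This needs care because $H_\lambda$ is defined through the radical of the form $B_\lambda$, not through commutators in $S$. It reduces to the observation that, since $\lambda$ is linear, $\lambda([x,y])=B_\lambda(\bar x,\bar y)$.
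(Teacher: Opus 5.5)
Your proposal is correct and follows the same route as the paper: vanishing off $H_\lambda$ via a commutator on which $\lambda$ is nonzero, Schur's lemma on $H_\lambda$ (your identification $H_\lambda/\ker\lambda=Z(S/\ker\lambda)$ just makes explicit the paper's remark that such elements commute with every representation matrix), orthogonality for $d_\lambda^2=2^{\rho_\lambda}$, and the degree-sum identity $\sum\chi(1)^2=|S:W|$ for the count. The only cosmetic difference is that you justify that identity by Frobenius reciprocity for $\mu_\lambda^S$, whereas the paper appeals to the corresponding central idempotent summand of $\mathbb C[S]$.
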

\begin{proof}
If $g\notin H_\lambda$, some $h$ satisfies
$(-1)^{\lambda([g,h])}=-1$. Conjugation by $h$ therefore gives
$\chi(g)=-\chi(g)$, so $\chi(g)=0$. If $g\in H_\lambda$, its
representation matrix commutes with every representation matrix;
Schur's lemma makes it scalar. These scalars define
$\theta_\chi$. Character orthogonality now gives
\[
 1=\frac{|H_\lambda|d_\lambda^2}{|S|},
 \qquad d_\lambda^2=2^{\rho_\lambda}.
\]
The sum of squared degrees above a fixed central character is
$|S:W|=2^m$, for example by decomposing the corresponding central
idempotent summand of $\mathbb C[S]$. This yields the stated number.
\end{proof}

\begin{thm}[Exact SR test]\label{thm:class2-SR-new}
The special group $S(\beta,q)$ constructed in
\eqref{eq:class2-construction} is SR if and only if both
of the following conditions hold:
\begin{align}
 &\lambda(q(v))=0
   &&(\lambda\in W^*,\ v\in R_\lambda),\label{eq:class2-R-new}\\
 &\rho_\lambda+\rho_\mu+\rho_{\lambda+\mu}
   =2\bigl(m-\dim(R_\lambda\cap R_\mu)\bigr)
   &&(0\ne\lambda\ne\mu\ne0).\label{eq:class2-M-new}
\end{align}
The second condition is equivalent to multiplicity-freeness and depends
only on $\beta$. For a fixed $\beta$, the first condition is an affine
linear system in the $mr$ coordinates of $q$ on a basis of $V$.
\end{thm}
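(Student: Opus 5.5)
The plan is to treat the two conditions of Definition~\ref{def:SR} separately. Ambivalence will turn out to be equivalent to \eqref{eq:class2-R-new}, and multiplicity-freeness to \eqref{eq:class2-M-new}. Both will be read off from the character description in Lemma~\ref{lem:class2-support-new}.

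\textbf{Step 1: ambivalence.} Ambivalence is equivalent to every irreducible character being real-valued. Fix $\lambda$ and an irreducible $\chi$ above $\lambda$.
\begin{enumerate}
\item By Lemma~\ref{lem:class2-support-new}, $\chi$ vanishes off $H_\lambda$ and equals $d_\lambda\theta_\chi$ on $H_\lambda$, with $\theta_\chi$ linear. Hence $\chi$ is real if and only if $\theta_\chi$ takes only the values $\pm1$.
\item For $h\in H_\lambda$ with image $v\in R_\lambda$, we have $h^2\in W$. Therefore $\theta_\chi(h)^2=\theta_\chi(h^2)=(-1)^{\lambda(q(v))}$.
\item Consequently $\theta_\chi$ is $\pm1$-valued exactly when $\lambda(q(v))=0$ for all $v\in R_\lambda$.
\end{enumerate}
This criterion does not depend on which $\chi$ above $\lambda$ was chosen, and for $\lambda=0$ it holds automatically. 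So ambivalence is exactly \eqref{eq:class2-R-new}.

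For the affine-linearity claim, note that $B_\lambda$ vanishes on $R_\lambda$. By \eqref{eq:polar-new}, $\lambda\circ q$ is therefore additive on $R_\lambda$, so it suffices to impose the condition on a basis of $R_\lambda$. For fixed $\beta$, the expansion $q(v)=\sum_i v_iq_i+\sum_{i<j}v_iv_jb_{ij}$ makes each such condition affine in the coordinates of the $q_i$.

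\textbf{Step 2: multiplicity-freeness via a norm comparison.} Take $\chi$ above $\lambda$ and $\psi$ above $\mu$, and put $\nu=\lambda+\mu$.
\begin{enumerate}
\item The product $\chi\psi$ lies above $\nu$. It is supported on $H_\lambda\cap H_\mu$ and has absolute value $d_\lambda d_\mu$ there.
\item Since $|H_\lambda\cap H_\mu|/|S|=2^{\dim(R_\lambda\cap R_\mu)-m}$, we get
\[
\langle\chi\psi,\chi\psi\rangle=2^{\rho_\lambda+\rho_\mu+\dim(R_\lambda\cap R_\mu)-m}.
\]
\item Write $\chi\psi=\sum a_i\theta_i$ over the irreducibles $\theta_i$ above $\nu$. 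Each $\theta_i$ has degree $d_\nu$, so $\sum a_i=d_\lambda d_\mu/d_\nu=2^{(\rho_\lambda+\rho_\mu-\rho_\nu)/2}$.
\item Since $\sum a_i^2\ge\sum a_i$, with equality if and only if every $a_i\in\{0,1\}$, the product $\chi\psi$ is multiplicity-free if and only if
\[
\rho_\lambda+\rho_\mu+\dim(R_\lambda\cap R_\mu)-m=(\rho_\lambda+\rho_\mu-\rho_\nu)/2.
\]
This rearranges to \eqref{eq:class2-M-new}.
\end{enumerate}
The resulting condition depends only on $\lambda,\mu$ and $\beta$, not on the particular $\chi,\psi$ or on $q$.

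\textbf{Step 3: the excluded cases.} If $\lambda=0$ or $\mu=0$, one factor is linear, so the product is irreducible and the condition is automatic. If $\lambda=\mu\neq0$, then $\nu=0$ and $R_\lambda\cap R_\mu=R_\lambda$ has dimension $m-\rho_\lambda$. Both sides of \eqref{eq:class2-M-new} then equal $2\rho_\lambda$, so the identity holds automatically. This justifies restricting \eqref{eq:class2-M-new} to $0\ne\lambda\ne\mu\ne0$.

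The main point requiring care is Step 2. It needs the norm computation to use the exact support $H_\lambda\cap H_\mu$, where $\theta_\chi\theta_\psi$ has absolute value $1$, together with the observation that all constituents have the common degree $d_\nu$. That common degree is what converts the inequality $\sum a_i^2\ge\sum a_i$ into an exact numerical criterion.
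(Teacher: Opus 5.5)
Your proof is correct. It rests on the same support lemma (Lemma~\ref{lem:class2-support-new}) as the paper's, but both halves take a different route.

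\textbf{Ambivalence.} The paper argues with conjugacy classes. The class of $g$ over $v$ is $g\beta(v,V)$ and $g^{-1}=gq(v)$, so ambivalence is the pointwise condition $q(v)\in\beta(v,V)$. This is then dualized, using that the annihilator of $\beta(v,V)$ is $\{\lambda: v\in R_\lambda\}$. You work on the character side from the start, testing reality through $\theta_\chi(h)^2=(-1)^{\lambda(q(v))}$.

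The paper's version needs no character theory. It also yields the pointwise form $q(v)\in\beta(v,V)$, which is reused later in the star-family and pencil arguments. Your version localizes the condition to central characters: \eqref{eq:class2-R-new} at a fixed $\lambda$ holds exactly when all characters above $\lambda$ are real, and otherwise none of them is.

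\textbf{Multiplicity-freeness.} Put $t=\dim(R_\lambda\cap R_\mu)$. The paper evaluates each coefficient $\langle\chi\psi,\eta\rangle$ directly, as a sum of a linear character over the inverse image of $R_\lambda\cap R_\mu$, using $R_\lambda\cap R_\mu\le R_{\lambda+\mu}$. It obtains the exact value $2^{(\rho_\lambda+\rho_\mu+\rho_{\lambda+\mu})/2+t-m}$ for every nonzero multiplicity. You compute only $\langle\chi\psi,\chi\psi\rangle$ and the degree count $\sum a_i=d_\lambda d_\mu/d_{\lambda+\mu}$, and compare them using $\sum a_i^2\ge\sum a_i$.

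Your route is shorter and avoids the three-character sum. As a by-product it gives the general inequality $\rho_\lambda+\rho_\mu+\rho_{\lambda+\mu}\ge 2(m-t)$, which is the nonnegativity of the defect used later for pencils. The paper's computation gives more: all constituents of $\chi\psi$ occur with one common multiplicity. Your two aggregate numbers do not show this.
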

\begin{proof}
For $g$ lying over $v$, its conjugacy class is
$g\beta(v,V)$ and $g^{-1}=gq(v)$. Hence ambivalence is equivalent to
$q(v)\in\beta(v,V)$. The annihilator of $\beta(v,V)$ consists of the
$\lambda$ with $v\in R_\lambda$, proving
\eqref{eq:class2-R-new}. Since $\lambda q$ restricts to a linear
function on $R_\lambda$, it suffices to impose this condition on a
basis of each radical. Any two refinements of $\beta$ differ by a
linear map $V\to W$, so these are affine linear equations.

Now take $\chi,\psi,\eta$ above $\lambda,\mu,\nu$, respectively.
A nonzero coefficient $\langle\chi\psi,\eta\rangle$ requires
$\nu=\lambda+\mu$. Put
$T=R_\lambda\cap R_\mu$ and $t=\dim T$. Then
$T\le R_\nu$, and Lemma~\ref{lem:class2-support-new} gives
\[
 \langle\chi\psi,\eta\rangle
 =\frac{d_\lambda d_\mu d_\nu}{|S|}
  \sum_{g\in\pi^{-1}(T)}
  \theta_\chi(g)\theta_\psi(g)\overline{\theta_\eta(g)}.
\]
The summand is a linear character of $\pi^{-1}(T)$, trivial on $W$.
Its sum is either zero or $|W|2^t$. Consequently every nonzero
multiplicity in this product is
\begin{equation}\label{eq:class2-multiplicity-new}
 2^{(\rho_\lambda+\rho_\mu+\rho_\nu)/2+t-m}.
\end{equation}
At least one constituent occurs, so the exponent is a nonnegative
integer. Thus all products are multiplicity-free exactly when these
exponents vanish. The cases $\lambda=0$, $\mu=0$, and
$\lambda=\mu$ give zero exponents automatically; all other cases
are \eqref{eq:class2-M-new}.
\end{proof}

One may check \eqref{eq:class2-M-new} once per two-dimensional subspace
of $W^*$: its three pairs have the same rank sum and common radical.
There are $(2^r-1)(2^r-2)/6$ such checks.
Algorithm~\ref{alg:class2-filter} gives the exact filter and affine-refinement procedure.

\begin{thm}[Stem pencils]\label{thm:class2-pencil-new}
Suppose $r=2$ and write $B_1,B_2,B_{12}=B_1+B_2$ for the nonzero
forms of the commutator pencil. The pencil underlies an SR group if
and only if
\begin{equation}\label{eq:pencil-ranks-new}
 \rho_1+\rho_2+\rho_{12}=2m.
\end{equation}
Every stem pencil admits an ambivalent refinement. If
\eqref{eq:pencil-ranks-new} holds, it has exactly $2^m$ ambivalent
refinements before taking isomorphism orbits.
\end{thm}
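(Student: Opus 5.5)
Everything reduces to Theorem~\ref{thm:class2-SR-new} with $r=2$. Then $W^*$ has exactly three nonzero functionals $\lambda_1,\lambda_2,\lambda_1+\lambda_2$, and they span the only two-dimensional subspace of $W^*$. So condition \eqref{eq:class2-M-new} reduces to the single equation $\rho_1+\rho_2+\rho_{12}=2(m-\dim(R_1\cap R_2))$. The stem hypothesis says $\beta$ has zero common radical. With $\beta=(B_1,B_2)$ and $R_\lambda=\operatorname{rad}(B_\lambda)$, this means $R_1\cap R_2=0$, and the equation becomes \eqref{eq:pencil-ranks-new}. Surjectivity of $\beta$ onto $W$ amounts to $B_1,B_2$ being linearly independent, which is part of the pencil data.

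Since \eqref{eq:class2-M-new} depends only on $\beta$, the equivalence follows once every stem pencil admits some ambivalent $q$, i.e.\ some solution of \eqref{eq:class2-R-new}. I would construct one directly. Condition \eqref{eq:class2-R-new} asks: $\lambda_1 q$ vanishes on $R_1$, $\lambda_2 q$ vanishes on $R_2$, and $(\lambda_1+\lambda_2)q$ vanishes on $R_{12}$; the condition for $\lambda=0$ is vacuous. Write $q=(q_1,q_2)$, where $q_i$ is a quadratic form on $V$ with polar form $B_i$. A natural choice is to pick a basis adapted to each form. The difficulty is that all three constraints must hold at once, and on $R_{12}$ the constraint couples $q_1$ and $q_2$.

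The cleaner route is to count solutions of the affine system. Fix one refinement $q^0$. Every other refinement is $q^0+\ell$ with $\ell\in\Hom(V,W)$, that is, $\ell=(\ell_1,\ell_2)$ with $\ell_i\in V^*$. On each radical the relevant quadratic form restricts to a linear functional, so the conditions read:
\begin{align*}
 \ell_1|_{R_1}&=-q^0_1|_{R_1}, & \ell_2|_{R_2}&=-q^0_2|_{R_2}, & (\ell_1+\ell_2)|_{R_{12}}&=-(q^0_1+q^0_2)|_{R_{12}}.
\end{align*}
This is the system $\phi(\ell)=c$ for the linear map
\[
 \phi:V^*\oplus V^*\to R_1^*\oplus R_2^*\oplus R_{12}^*,\qquad (\ell_1,\ell_2)\mapsto(\ell_1|_{R_1},\,\ell_2|_{R_2},\,(\ell_1+\ell_2)|_{R_{12}}).
\]
It is solvable for every right-hand side if $\phi$ is surjective, and then it has $2^{2m-\sum\dim R_i}$ solutions. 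Since $\dim R_i=m-\rho_i$, under \eqref{eq:pencil-ranks-new} we get $\sum\dim R_i=3m-2m=m$, so the count is $2^{2m-m}=2^m$, as claimed.

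It therefore remains to prove that $\phi$ is surjective, equivalently that its dual is injective. The dual map sends $(a,b,c)\in R_1\oplus R_2\oplus R_{12}$ to $(a+c,\,b+c)\in V\oplus V$. It is injective precisely when $R_1\cap R_2=R_2\cap R_{12}=R_1\cap R_{12}=0$ (given $a=-c$ and $b=-c$, we need $c=0$). Each pairwise intersection equals $R_1\cap R_2$: a vector in the radicals of two forms of the pencil is in the radical of their sum. The stem condition makes this zero, so $\phi$ is surjective for every stem pencil. This yields the existence claim unconditionally and the count $2^m$ under \eqref{eq:pencil-ranks-new}.

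The main care point is the parity and compatibility bookkeeping: each constraint restricted to its radical must genuinely be linear, which is guaranteed because the form's polar vanishes there. If $\sum\dim R_i$ differs from $m$, the count changes, but existence does not.
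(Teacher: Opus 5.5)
Your proposal is correct and follows the paper's proof essentially step for step. The stem condition makes every pairwise radical intersection zero, so \eqref{eq:class2-M-new} collapses to \eqref{eq:pencil-ranks-new}. The reality conditions form an affine system in the linear corrections $(\ell_1,\ell_2)$. Your injectivity argument for the dual map $(a,b,c)\mapsto(a+c,b+c)$ is exactly the paper's row-space independence argument via $R_1\cap R_2\cap R_{12}=0$, and it gives $2^{2m-\sum\dim R_i}=2^m$ solutions.
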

\begin{proof}
The intersection of any two distinct radicals is the common radical,
which is zero. Theorem~\ref{thm:class2-SR-new} therefore reduces
multiplicity-freeness to \eqref{eq:pencil-ranks-new}.

Choose a refinement $(q_1^0,q_2^0)$ and seek linear corrections
$(\ell_1,\ell_2)\in V^*\oplus V^*$. The reality equations prescribe
$\ell_1$ on $R_1$, $\ell_2$ on $R_2$, and $\ell_1+\ell_2$ on
$R_{12}$. Their row spaces in $V\oplus V$ are
\[
 (R_1,0),\qquad (0,R_2),\qquad
 \{(v,v):v\in R_{12}\}.
\]
A dependence among these requires a common vector in
$R_1\cap R_2\cap R_{12}=0$. Thus the equations are independent,
and any prescribed right-hand side is attainable. The number of
solutions is
\[
 2^{2m-\dim R_1-\dim R_2-\dim R_{12}}
 =2^{\rho_1+\rho_2+\rho_{12}-m},
\]
which equals $2^m$ in the multiplicity-free case.
\end{proof}

At $(m,r)=(8,2)$, the possible rank multisets are therefore exactly
\[
 \{2,6,8\},\qquad \{4,4,8\},\qquad \{4,6,6\}.
\]
Each is realized by an orthogonal sum of scalar symplectic blocks
taking values in the three nonzero lines of $W$, with half-dimensions
$(3,1,0)$, $(2,2,0)$, and $(2,1,1)$, respectively. A choice of scalar
quadratic refinements gives a central quotient of a direct product
of extraspecial groups, hence an SR group. This establishes existence
of the profiles, not a count of the isomorphism classes within them.

\subsection{Four commutator pencils at \texorpdfstring{$(m,r)=(8,2)$}{(m,r)=(8,2)}}

Let $H_w$ denote a two-dimensional block on which the alternating
$W$-valued form is $w$ times the standard symplectic form,
where $0\ne w\in W=\mathbb F_2^2$. Let $K$ denote the
three-dimensional block
\[
 B_1=t\wedge x,\qquad B_2=t\wedge y.
\]
Orthogonal sums below are simultaneous orthogonal sums for both forms.

\begin{prop}[Multiplicity-free pencils]
\label{prop:MF-pencil-blocks-new}
A stem alternating pencil over $\mathbb F_2$ satisfies
\eqref{eq:pencil-ranks-new} if and only if it is an orthogonal sum
of blocks $H_w$ and $K$. Its orbit under
$\mathrm{GL}(V)\times\mathrm{GL}(W)$ is determined by the number
$k$ of $K$ blocks and the unordered triple $(a,b,c)$ of multiplicities
of the three colors $H_w$. In particular there are exactly four
such pencil orbits when $m=8$:
\[
\begin{array}{c|c|c}
 (a,b,c)&k&\{\rho_1,\rho_2,\rho_{12}\}\\\hline
 (3,1,0)&0&\{2,6,8\}\\
 (2,2,0)&0&\{4,4,8\}\\
 (2,1,1)&0&\{4,6,6\}\\
 (1,0,0)&2&\{4,6,6\}
\end{array}
\]
Each of these four pencils has exactly $256$ ambivalent refinements
before the action of its stabilizer is taken.
\end{prop}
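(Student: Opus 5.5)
The plan is to use the canonical form for pairs of alternating forms and reduce \eqref{eq:pencil-ranks-new} to a condition on each indecomposable summand. By the Kronecker--Weierstrass classification of alternating pencils (Scharlau, Dieudonné), which holds over any field including $\mathbb F_2$ for alternating (not merely symmetric) forms, every pencil $(B_1,B_2)$ on $V$ is, up to $\mathrm{GL}(V)$, a simultaneous orthogonal sum of two kinds of indecomposable blocks, and the multiset of blocks is an invariant.
\begin{enumerate}
\item \emph{Kronecker blocks} $K_n$ of dimension $2n+1$, $n\ge0$.
\item \emph{Doubled Jordan blocks} $J_{p,n}$ of dimension $2n\deg p$, attached to an irreducible $p$ on $\mathbb P^1$. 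Such a block has Gram matrix $\begin{pmatrix}0&A\\-A^{T}&0\end{pmatrix}$, where $A$ is a pencil of $n\deg p$-square matrices with the single elementary divisor $p^n$.
\end{enumerate}
The block $K_0$ is one-dimensional with zero forms, so it lies in the common radical and is excluded for stem pencils. The blocks $K$ and $H_w$ of the statement are $K_1$ and $J_{p,1}$ with $\deg p=1$. The three rational points of $\mathbb P^1(\mathbb F_2)$ correspond to the three forms $B_1,B_2,B_{12}$, and $H_w$ is the block whose degenerate point is $w$ in the dual labelling.

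The key step is a blockwise rank count. Ranks are additive over orthogonal sums, and $m$ is the sum of the block dimensions $d_i$. So $\rho_1+\rho_2+\rho_{12}-2m$ is the sum over blocks of $\sigma_i-2d_i$, where $\sigma_i$ is the sum of the three ranks on block $i$. I will compute these differences directly.
\begin{itemize}
\item For $K_n$, every member of the pencil has rank $2n$, because there are no elementary divisors. The difference is $6n-(4n+2)=2n-2$.
\item For $J_{p,n}$ with $\deg p\ge2$, no rational point is a root of $p$, so all three ranks are full. The difference is $6n\deg p-4n\deg p>0$.
\item For $J_{p,n}$ with $\deg p=1$, two forms have full rank $2n$. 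At the root of $p$ the matrix $A$ has rank $n-1$, so that form has rank $2n-2$. The difference is $6n-2-4n=2n-2$.
\end{itemize}
All differences are nonnegative, which is consistent with the nonnegativity of the exponent \eqref{eq:class2-multiplicity-new} when $t=0$. Hence equality \eqref{eq:pencil-ranks-new} holds iff every block has difference zero, that is, iff every block is $K_1=K$ or $J_{p,1}=H_w$ with $p$ linear. Both directions of the first assertion follow from this.

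For orbits, uniqueness of the canonical form determines the $\mathrm{GL}(V)$-orbit by $k$ and the ordered multiplicities of the three colours. The group $\mathrm{GL}(W)\cong S_3$ acts by Möbius maps that permute the three rational points, so it permutes the colours arbitrarily and fixes the isomorphism type of $K$. Thus the orbit is determined by $k$ and the unordered triple $(a,b,c)$.

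For $m=8$ we need $2(a+b+c)+3k=8$, so $k\in\{0,2\}$. The requirement that $\beta$ be surjective onto $W=S'$ excludes the case $k=0$ with only one colour, namely $(4,0,0)$. In that case the image is a single line, and the form vanishing on it is identically zero. This leaves $(3,1,0)$, $(2,2,0)$ and $(2,1,1)$ with $k=0$, and $(1,0,0)$ with $k=2$. In the last case the $K$ blocks already give surjectivity. The rank multisets follow from the block ranks: $H_w$ contributes $2$ to each of the two forms not vanishing on it, and $K$ contributes $2$ to each of the three forms. The count of $256=2^8$ refinements is then Theorem~\ref{thm:class2-pencil-new}.

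I expect the main obstacle to be justifying the canonical form and its uniqueness over $\mathbb F_2$. This includes the rank of the doubled Jordan block at its root and the fact that elementary divisors pair up. I would cite Scharlau's classification of pairs of alternating forms, which is characteristic-free. If necessary I would reprove it by reducing to the Kronecker form of the single pencil $A$ of $\mathrm{Hom}$ maps, using a Lagrangian splitting.
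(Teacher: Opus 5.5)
Your proposal is correct and follows the paper's own argument. Both use the Scharlau canonical decomposition of alternating pencils and the additivity of the defect $\rho_1+\rho_2+\rho_{12}-2m$ over blocks. That defect is $2h-2$ for singular blocks, with $h=0$ excluded by stemness, $2n-2$ for a linear elementary divisor, and positive for elementary divisors of higher degree. Both also use the arbitrary permutation of the three colours by $\mathrm{GL}(W)$, and the enumeration $2(a+b+c)+3k=8$ with $(4,0,0)$ discarded. The only difference is cosmetic: the paper re-derives the count $2^8=256$ inline from the independence of the three families of reality equations, whereas you cite Theorem~\ref{thm:class2-pencil-new}, which rests on the same argument.
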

\begin{proof}
We use the canonical decomposition of a pair of alternating forms,
which is valid in characteristic two; see \cite{Scharlau76}, \cite[Theorem~4.1]{DD18}
for its singular part and \cite[pp.~57--59]{FV04} for the associated
Kronecker modules. A singular block has dimension $2h+1$ and all
three nonzero forms have rank $2h$. A regular primary block is the
alternating doubling of an elementary-divisor block; if its elementary
divisor has degree $n$, its dimension is $2n$. These decompositions
are unique up to permutation and equivalence of blocks.

The defect
\[
 D=\rho_1+\rho_2+\rho_{12}-2m
\]
is additive on orthogonal sums. A singular block has defect $2h-2$.
Stemness excludes $h=0$, since that block is a common radical.
Thus the only singular blocks of defect zero have $h=1$ and are $K$.

For a regular block with a linear elementary divisor of exponent
$n$, two of the three scalar forms have full rank $2n$, while the
third has rank $2n-2$. Its defect is $2n-2$, which vanishes only
when $n=1$. These three rational projective eigenvalues give
precisely the three colors $H_w$. If the elementary divisor has an
irreducible factor of degree at least two, none of the three
rational scalar combinations is singular; all three ranks equal
the block dimension, and its defect is strictly positive.

Every block therefore has nonnegative defect. The total defect
vanishes exactly for sums of $H_w$ and $K$. Under
$\mathrm{GL}_2(\mathbb F_2)$ the three colors are permuted
arbitrarily, and the singular block type is preserved. The canonical
invariants consequently give $(k,\{a,b,c\})$ as stated.

For $m=8$, the dimension equation is
$2(a+b+c)+3k=8$. It gives $k=0$ with $a+b+c=4$, or $k=2$ with
$a+b+c=1$. In the first case the partition $(4,0,0)$ is excluded
because the pencil would span only one scalar direction. The
remaining choices are exactly those in the table. Their ranks follow directly from the blocks. We spell out the
refinement count in this dimension. Fix any refinement
$q^0=(q_1^0,q_2^0)$ of one of the four pencils. Every other
refinement has the unique form
\[
 q=q^0+(\ell_1,\ell_2),\qquad
 (\ell_1,\ell_2)\in V^*\oplus V^*,
\]
so there are initially $2^{16}$ choices. Ambivalence imposes the
equations
\[
 \ell_1|_{R_1}=q_1^0|_{R_1},\qquad
 \ell_2|_{R_2}=q_2^0|_{R_2},\qquad
 (\ell_1+\ell_2)|_{R_{12}}=(q_1^0+q_2^0)|_{R_{12}}.
\]
The right-hand sides are linear on their respective radicals.
As in the proof of Theorem~\ref{thm:class2-pencil-new}, the three
families of equations have independent row spaces: a dependence
would give a vector in $R_1\cap R_2\cap R_{12}$, which is zero by
stemness. For each row of the table their total rank is
\[
 \dim R_1+\dim R_2+\dim R_{12}
 =3\cdot8-(\rho_1+\rho_2+\rho_{12})=24-16=8.
\]
Thus the affine system is consistent and has dimension $16-8=8$.
It has exactly $2^8=256$ solutions. These are refinements of a fixed
pencil; the subsequent stabilizer orbits, rather than the $256$
individual solutions, count group isomorphism types.
\end{proof}

This gives an exhaustive and small input for the $(8,2)$ computation:
four explicit pairs of alternating $8\times8$ matrices and $1024$
quadratic refinements in total. The last two rows show why an unordered
rank profile alone does not identify a commutator pencil.

\begin{thm}[The complete $(8,2)$ stratum]
\label{thm:pencil-eighteen-new}
There are exactly $18$ isomorphism classes of SR groups $S$ of order
$1024$ with $Z(S)=S'\cong C_2^2$. Their quotient by the center is
$C_2^8$. The four commutator pencils in
Proposition~\ref{prop:MF-pencil-blocks-new} carry respectively
$4$, $3$, $6$, and $5$ isomorphism classes of quadratic refinements.
\end{thm}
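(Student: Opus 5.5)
The argument reduces the theorem to four finite orbit counts, one for each commutator pencil of Proposition~\ref{prop:MF-pencil-blocks-new}. By Lemma~\ref{lem:class2-split-new}, an SR group with $Z(S)=S'\cong C_2^2$ and $|S|=1024$ is a special group with $(m,r)=(8,2)$. Its quotient by the center is then $C_2^8$, which is the second assertion. By Proposition~\ref{prop:class2-dictionary-new}, isomorphism classes of such groups correspond to $\GL(V)\times\GL(W)$-orbits of quadratic maps $q$ whose polar pencil is stem. Theorem~\ref{thm:class2-SR-new} says such a group is SR exactly when the pencil satisfies \eqref{eq:pencil-ranks-new} and $q$ is an ambivalent refinement. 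Proposition~\ref{prop:MF-pencil-blocks-new} lists the admissible pencils up to equivalence: exactly four, with $256$ ambivalent refinements each. Two orbits of quadratic maps can only coincide if their polar pencils are equivalent, so the classes split by pencil. For each fixed representative pencil $\beta$, the isomorphism classes lying over it are the orbits of the stabilizer
\[
 \mathrm{Stab}(\beta)=\{(A,B)\in\GL(V)\times\GL(W):\ \beta(Av,Au')=B\beta(v,u')\}
\]
acting on the $256$-element affine space of ambivalent refinements by $q\mapsto B\circ q\circ A^{-1}$. It therefore suffices to show that these orbit counts are $4,3,6,5$.

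I would then compute each stabilizer explicitly from the block decomposition. For the three pencils with $k=0$, the $\GL(W)$ part acts by permuting colors, and only permutations preserving the multiset $(a,b,c)$ can occur. The kernel of the projection to $\GL(W)$ is the product over colors of the symplectic groups $\mathrm{Sp}_{2a}(2)\times\mathrm{Sp}_{2b}(2)\times\mathrm{Sp}_{2c}(2)$ acting on the isotypic components. Here I would use that a linear map preserving both forms must preserve each radical $R_w$, and these radicals are the sums of the other colors. On a color-$w$ block, the scalar component of $q$ along a complement of $w$ is a genuine quadratic refinement of a symplectic form. The refinements decompose blockwise, and each block contributes an Arf invariant, so orbits under the product of symplectic groups are indexed by the Arf invariants of the colored pieces. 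The remaining components of $q$ are constrained by ambivalence. I expect these to be absorbed by translations, which should make them irrelevant to the orbit count; this has to be checked. The count then reduces to counting Arf-invariant vectors modulo the permutation of equal-multiplicity colors, together with whatever $\GL(W)$ elements act by changing the complement of $w$. For example, color data $(3,1,0)$ would give $2\cdot2=4$ classes if no identification occurs. I would run this check carefully in each of the three cases, aiming at $4$, $3$, $6$.

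The fourth pencil, with $k=2$ and one $H_w$ block, is the hardest step. Here the stabilizer mixes the two $K$ blocks with each other and with $H_w$, because the orthogonal decomposition is not canonical: the $t$-vectors of $K$ lie in radical intersections, and shears between blocks are allowed. Rather than attempt a hand classification, I would compute $\mathrm{Stab}(\beta)$ as the automorphism group of the pencil. This can be done by a direct search in $\GL_8(2)\times\GL_2(2)$, or more efficiently as the stabilizer of the pair of alternating matrices computed in GAP. I would then form the orbits on the $256$ refinements explicitly, expecting $5$. As an independent check for all four pencils, I would construct each orbit representative as $S(\beta,q)$ via \eqref{eq:class2-construction}. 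I would confirm SR by Theorem~\ref{thm:wigner} and verify pairwise nonisomorphism with GAP's isomorphism test or with invariants such as the numbers of elements of each order and the class of the central quotient. The total is $4+3+6+5=18$. The main obstacle is the correct determination of the stabilizer for the $k=2$ pencil, including the non-block-diagonal automorphisms; orbit sizes summing to $256$ serve as a consistency check.
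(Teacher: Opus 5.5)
Your outline follows the paper's: split by the four pencils, count the three pencils built only from $H_w$ blocks by Arf invariants modulo permutations of equal-dimensional colours, and settle $K\perp K\perp H_{w_1}$ by an orbit computation on its $256$ refinements. Two steps, however, are not justified as written.

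\textbf{The $H_w$-only pencils.} You leave the ``remaining components'' of $q$ open and expect them to be ``absorbed by translations.'' That is the wrong mechanism. Reality forces these components to vanish, and the Arf count depends on this.

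Write $V=\bigoplus_w V_w$ for the colour decomposition. The radical of $\lambda\circ\beta$ is the single summand $V_w$ with $\lambda(w)=0$, not a sum of the other colours. This is why the summands are intrinsic up to permutation.

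For $0\ne v\in V_w$, nondegeneracy on $V_w$ and orthogonality give $\beta(v,V)=\langle w\rangle$. The ambivalence condition $q(v)\in\beta(v,V)$ then forces $q(V_w)\subseteq\langle w\rangle$. Polarization and orthogonality give $q(\sum_w v_w)=\sum_w q(v_w)$.

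So an ambivalent refinement is exactly a tuple of nonsingular scalar quadratic forms on the $V_w$. There are $\prod_w 2^{\dim V_w}=2^8$ such tuples, which accounts for all $256$ refinements. Each form is defined without choosing a complement of $w$, so your concern about $\GL(W)$ ``changing the complement'' disappears. With this description the counts $4,3,6$ follow, once you also check that swaps of equal-dimensional colours are realized by symplectic isometries.

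\textbf{The last pencil.} You do not need the full stabilizer. The routes you propose to it are also impractical:
\begin{itemize}
\item direct search in a group of order about $3\cdot 10^{19}$ is out of reach;
\item a naive orbit--stabilizer computation on pairs of matrices would have to enumerate an orbit with more than $10^{14}$ elements.
\end{itemize}

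More seriously, your fallback lower bound via element-order counts and the central quotient cannot work. All $256$ groups have exponent $4$ and central quotient $C_2^8$. They also share the class number, which depends only on $\beta$. The five orbits have involution counts $383,383,255,191,255$, so these invariants distinguish only three classes.

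The paper proceeds as follows:
\begin{itemize}
\item Upper bound: $15$ explicitly verified pencil automorphisms have orbits of sizes $48,48,48,96,16$ on the $256$ refinements, so there are at most five classes.
\item Lower bound: it uses the subspace $X=R_2\cap(R_1+R_{12})=\langle b_1,b_2\rangle$. This is intrinsic because $R_2$ is the radical of the unique rank-four form and $R_1+R_{12}$ is unchanged when the two rank-six forms are swapped. The Boolean invariant $q|_X=0$, together with the involution count, separates all five orbits.
\end{itemize}

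You need such a finer intrinsic invariant, or a genuine isomorphism test, to obtain the matching lower bound.
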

\begin{proof}
The four commutator pencils are exhaustive by
Proposition~\ref{prop:MF-pencil-blocks-new}. No groups belonging to
different pencil orbits can be isomorphic, since a group isomorphism
preserves the commutator map. We count the refinement orbits for
each pencil.

For a pencil made entirely of $H_w$ blocks, combine the blocks with
the same nonzero $w\in W$ into a symplectic space $V_w$. Thus
$V=\bigoplus_w V_w$, omitting zero-dimensional summands. If
$\lambda$ is the unique nonzero element of $W^*$ annihilating $w$,
then
\[
 R_\lambda=V_w.
\]
Consequently the summands are intrinsic, up to a permutation of
colors induced by $\mathrm{GL}(W)$. Reality forces
$q(V_w)\subseteq\langle w\rangle$: for a nonzero $v\in V_w$,
the commutator image $\beta(v,V)$ is exactly this line. Moreover,
orthogonality and polarization give
$q(\sum_wv_w)=\sum_wq(v_w)$. Thus a refinement is precisely a
choice of a nonsingular scalar quadratic form on each $V_w$.

Each such form has two isometry types, distinguished by its Arf
invariant. The only possible identifications permute colors whose
symplectic dimensions agree. For $(a,b,c)=(3,1,0)$ the two nonzero
dimensions differ, giving $2\cdot2=4$ choices. For $(2,2,0)$ the
two signs can be interchanged, giving three unordered choices.
For $(2,1,1)$ the sign on the dimension-four summand is independent
of the unordered pair of signs on the two dimension-two summands,
giving $2\cdot3=6$ choices. The required permutations and scalar
isometries exist, so these are both upper and lower bounds.

It remains to count refinements of $K\perp K\perp H_{w_1}$.
Take a basis
\[
 a_1,b_1,c_1,a_2,b_2,c_2,p,q
\]
of $V$, with scalar polar forms
\[
 B_1=a_1^*\wedge b_1^*+a_2^*\wedge b_2^*+p^*\wedge q^*,
 \qquad
 B_2=a_1^*\wedge c_1^*+a_2^*\wedge c_2^*.
\]
Here $B_2$ is the unique rank-four form, whereas $B_1$ and
$B_1+B_2$ have rank six. Their radicals are
\[
 R_1=\langle c_1,c_2\rangle,\quad
 R_2=\langle b_1,b_2,p,q\rangle,\quad
 R_{12}=\langle b_1+c_1,b_2+c_2\rangle.
\]
The subspace
\begin{equation}\label{eq:lastpencil-X-new}
 X=R_2\cap(R_1+R_{12})=\langle b_1,b_2\rangle
\end{equation}
is intrinsic to the pencil: $R_2$ is determined by the unique
rank-four form, and interchanging the two rank-six forms leaves
their radical sum unchanged. Hence the Boolean condition
$q|_X=0$, together with the number of involutions, is an invariant
of the full $\mathrm{GL}(V)\times\mathrm{GL}(W)$-orbit.

There are exactly $256$ reality-compatible refinements by
Theorem~\ref{thm:class2-pencil-new}. The accompanying exact GAP computation in Algorithm~\ref{alg:last-pencil} constructs
$15$ explicit pairs of invertible matrices, checks for each pair
the full tensor identity preserving $(B_1,B_2)$, and computes their
orbits on all $256$ refinements. The output is
\[
\begin{array}{c|c|c}
 \text{orbit size}&q|_X=0&\text{number of involutions}\\\hline
 48&\text{yes}&383\\
 48&\text{no}&383\\
 48&\text{no}&255\\
 96&\text{no}&191\\
 16&\text{yes}&255
\end{array}
\]
The sizes sum to $256$. Thus the verified tensor automorphisms give
at most five isomorphism classes. Conversely, the five rows have
distinct pairs of intrinsic invariants, so none can merge under
any additional pencil automorphism. This proves that there are
exactly five full orbits; it is unnecessary to assert that the
$15$ matrices generate the entire pencil stabilizer.

The total is therefore $4+3+6+5=18$.
\end{proof}

The computation in the last part is an orbit certificate with an
explicit exhaustiveness argument: its domain is the entire affine
space of $256$ solutions of the reality equations, its generators
are checked against the polar tensors, and its five resulting
orbits are separated by intrinsic invariants. The corresponding implementation and orbit table are identified in
Appendix~\ref{app:implementation} and include reconstructible group
representatives. This completes the $(8,2)$ branch. The branches with larger derived
subgroup are treated by quotient extensions and graph orbits below.

\subsection{Constant-rank-two spaces and the star family}

\begin{lemma}[Rank-two dichotomy over $\mathbb F_2$]
\label{lem:rank2-dichotomy-new}
Let $\mathcal N\le\bigwedge^2V^*$ be a subspace all of whose nonzero
forms have rank two. Then either
\[
 \mathcal N\le\varphi\wedge V^*
 \quad\text{for some }0\ne\varphi\in V^*,
\]
or $\mathcal N\le\bigwedge^2U$ for a three-dimensional
subspace $U\le V^*$.
\end{lemma}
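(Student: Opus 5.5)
The plan is to identify rank-two alternating forms with decomposable bivectors $\varphi\wedge\psi$ in $\bigwedge^2V^*$, where $\varphi,\psi\in V^*$ are linearly independent. To such a form we attach its two-dimensional support $P=\langle\varphi,\psi\rangle\le V^*$, which is the annihilator of its radical. The form determines $P$, and conversely $P$ determines the form: over $\mathbb F_2$ a two-dimensional subspace has exactly one nonzero element of $\bigwedge^2P$. So the map from nonzero forms of $\mathcal N$ to planes in $V^*$ is injective, and the goal is to show that the set of planes attached to $\mathcal N$ is either a pencil through a common line $\langle\varphi\rangle$ or contained in a single three-dimensional $U$.

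\textbf{Step 1 (pairs).} Take two distinct nonzero forms $\omega_1=\varphi_1\wedge\psi_1$ and $\omega_2$ in $\mathcal N$, with planes $P_1\ne P_2$. If $P_1\cap P_2=0$, then $P_1+P_2$ is four-dimensional, and $\omega_1+\omega_2$ is a sum of two decomposable bivectors on complementary planes. Its rank is four, since $\omega_1\wedge\omega_2\ne0$. This contradicts the hypothesis, because $\omega_1+\omega_2\in\mathcal N$ is nonzero. Hence any two distinct planes meet in exactly a line, and so $\dim(P_1+P_2)=3$.

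\textbf{Step 2 (combinatorics of pairwise-meeting planes).} The planes form a family in which any two meet in a line. This is the classical dichotomy: either all planes share a common line, or all lie in a common three-dimensional space. For the proof, take two planes $P_1,P_2$ with common line $L$ and span $U=P_1+P_2$. Consider any third plane $P_3$. If $P_3\not\supseteq L$, then $P_3$ meets $P_1$ and $P_2$ in distinct lines, both inside $U$. These lines span $P_3$, so $P_3\le U$. Hence every plane either contains $L$ or lies in $U$.

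Suppose some plane $P_3\le U$ does not contain $L$, and some plane $P_4\supseteq L$ does not lie in $U$. Then $P_3\cap P_4\subseteq U\cap P_4=L$, since $P_4\not\le U$ and $L\le U\cap P_4$. But $P_3\cap P_4$ must be a line, so $P_3\cap P_4=L$ and $L\le P_3$, a contradiction. So either every plane contains $L=\langle\varphi\rangle$, or every plane lies in $U$. If there is only one nonzero form, both conclusions hold trivially.

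\textbf{Step 3 (conclusion).} If every plane contains $\varphi$, each form can be written $\varphi\wedge\psi$. Thus all forms lie in $\varphi\wedge V^*$, and so does their span $\mathcal N$. If every plane lies in $U$, each form lies in $\bigwedge^2U$, hence so does $\mathcal N$.

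The main obstacle is Step 1, the claim that disjoint supports force rank four. I would verify it by choosing coordinates in which $\omega_1+\omega_2=e_1^*\wedge e_2^*+e_3^*\wedge e_4^*$, whose Gram matrix is visibly of rank four. This requires no characteristic assumption beyond working with alternating forms. The rest of the argument is elementary projective geometry and is independent of the field.
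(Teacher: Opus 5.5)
Your proof is correct and follows the paper's route. You attach a support plane to each rank-two form, show that two distinct planes must meet in a line (otherwise the sum has rank four), and then run the common-line versus common-three-space dichotomy for pairwise-meeting planes. One small caution: over $\mathbb F_2$, do not read ``rank four since $\omega_1\wedge\omega_2\ne0$'' as the usual wedge-square (Pfaffian) criterion, because $\omega\wedge\omega=0$ for every bivector in characteristic two; rely instead on your explicit coordinate computation (equivalently, the radical is the common kernel of four independent functionals), which is valid.
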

\begin{proof}
A nonzero rank-two form is decomposable and determines a two-plane in
$V^*$. The sum of two distinct such forms has rank two precisely when
their two-planes intersect in a line. Consider the resulting family
of pairwise intersecting two-planes. If they share a line, the first
alternative holds. Otherwise choose two planes meeting in a line
$L$, and a third not containing $L$. These three planes lie in their
common three-dimensional span $U$. Any further plane not contained
in $U$ would meet $U$ in at most a line, and that line would have to
belong to all three chosen planes, a contradiction. This gives the
second alternative. The one- and two-dimensional cases are included
in the first alternative.
\end{proof}

\begin{lemma}[A reality obstruction]\label{lem:wedge3-obstruction-new}
If a commutator space contains $\bigwedge^2U$ for some
three-dimensional $U\le V^*$, it admits no ambivalent quadratic
refinement.
\end{lemma}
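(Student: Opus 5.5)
The plan is to reduce to a three-dimensional test subspace of $V$ and derive a parity contradiction from the reality condition \eqref{eq:class2-R-new} of Theorem~\ref{thm:class2-SR-new}, applied to the seven nonzero forms of $\bigwedge^2U$. Write $U=\langle\varphi_1,\varphi_2,\varphi_3\rangle$. Since the commutator space $\{B_\lambda:\lambda\in W^*\}$ contains $\bigwedge^2U$, I choose $\lambda_{ij}\in W^*$ with $B_{\lambda_{ij}}=\varphi_i\wedge\varphi_j$ for $i<j$. The map $\lambda\mapsto B_\lambda$ is linear, so sums of the $\lambda_{ij}$ realize sums of these forms. Put $Q_{ij}=\lambda_{ij}\circ q$. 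This is a scalar quadratic form with polar form $\varphi_i\wedge\varphi_j$, by \eqref{eq:polar-new}. Because $\varphi_1,\varphi_2,\varphi_3$ are linearly independent, I can choose $e_1,e_2,e_3\in V$ with $\varphi_i(e_k)=\delta_{ik}$. Abbreviate $a_{ij,k}=Q_{ij}(e_k)$.

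Next I list radical memberships and impose reality, using the form for each of the seven nonzero $\lambda$ in $\langle\lambda_{12},\lambda_{13},\lambda_{23}\rangle$. The radical of $\varphi_i\wedge\varphi_j$ is $\ker\varphi_i\cap\ker\varphi_j$, so $e_3\in R_{\lambda_{12}}$, $e_2\in R_{\lambda_{13}}$ and $e_1\in R_{\lambda_{23}}$. This gives $a_{12,3}=a_{13,2}=a_{23,1}=0$. Over $\mathbb F_2$ the pairwise sums factor:
\[
\varphi_1\wedge(\varphi_2+\varphi_3),\qquad
\varphi_2\wedge(\varphi_1+\varphi_3),\qquad
(\varphi_1+\varphi_2)\wedge\varphi_3 .
\]
Their radicals contain $e_2+e_3$, $e_1+e_3$ and $e_1+e_2$, respectively. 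The triple sum factors as $(\varphi_1+\varphi_2)\wedge(\varphi_1+\varphi_3)$, and its radical contains $e_1+e_2+e_3$.

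I then expand each reality condition with the polarization identity $Q(u+v)=Q(u)+Q(v)+B(u,v)$. The only nonzero values of the polar forms on the basis vectors are $(\varphi_i\wedge\varphi_j)(e_i,e_j)=1$. The three pairwise conditions become
\[
a_{12,2}+a_{13,3}=0,\qquad a_{12,1}+a_{23,3}=0,\qquad a_{13,1}+a_{23,2}=0 .
\]
The triple condition is
\[
\sum_{i<j}\sum_k a_{ij,k}+\sum_{i<j}\ \sum_{k<l}(\varphi_i\wedge\varphi_j)(e_k,e_l)=0 .
\]
In the first sum, the nine terms split into the three pairs just shown to vanish and the three terms already known to be zero. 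The second sum consists of exactly three nonzero terms, namely $(\varphi_i\wedge\varphi_j)(e_i,e_j)=1$ for each pair $i<j$. The triple condition therefore reads $0+3=1=0$ in $\mathbb F_2$, a contradiction. Hence no refinement $q$ satisfies \eqref{eq:class2-R-new}, and by Theorem~\ref{thm:class2-SR-new} no refinement is ambivalent.

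The main obstacle is bookkeeping rather than any conceptual step: one must choose the radical vectors so that every cross term $B(e_k,e_l)$ is accounted for exactly once. Taking the dual basis $e_k$ makes the cross terms transparent, so I would carry out the computation in that basis. It is also worth noting that no stemness, surjectivity or multiplicity-freeness hypothesis is used; the argument depends only on reality and on the presence of $\bigwedge^2U$.
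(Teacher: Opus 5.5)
Your proof is correct and follows essentially the paper's route: the seven pairs of radical vector and functional you use are exactly the instances of the paper's identity $zq_{xy}+yq_{xz}+xq_{yz}=0$ at the nonzero points of $\langle e_1,e_2,e_3\rangle\cong\mathbb F_2^3$. Your substitution in effect sums these seven conditions, which is a hand computation of the square-free cubic coefficient that the paper shows equals one.
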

\begin{proof}
Choose independent coordinate functions $x,y,z$ spanning $U$ and
scalar components with polar forms $x\wedge y$, $x\wedge z$,
$y\wedge z$. On a coordinate complement to their common kernel,
the corresponding refinements have the form
$q_{xy}=xy+\ell_{xy}$, $q_{xz}=xz+\ell_{xz}$,
$q_{yz}=yz+\ell_{yz}$, where the $\ell$ are linear. At a vector
with coordinates $(x,y,z)$, that vector lies in the radical of
$zB_{xy}+yB_{xz}+xB_{yz}$. Reality would therefore require
\[
 zq_{xy}+yq_{xz}+xq_{yz}=0
 \qquad\text{on }\mathbb F_2^3.
\]
The left side has square-free cubic coefficient one, whereas its
remaining terms have degree at most two. Uniqueness of the
square-free polynomial representation of functions on
$\mathbb F_2^3$ gives a contradiction.
\end{proof}

\begin{prop}[The star family]\label{prop:star-new}
Let $m\ge3$, $U=\mathbb F_2^{m-1}$, $V=\mathbb F_2\oplus U$, and
$W=U$. For
\[
 \beta((t,u),(s,v))=tv+su,
\]
the ambivalent refinements are precisely
\[
 q(t,u)=tu+\delta u+tw,
 \qquad \delta\in\mathbb F_2,\quad w\in U.
\]
They form exactly three isomorphism orbits, and all are SR. Their
involution counts are
\[
 2^{m-1}(2^{m-1}+1)-1\quad\text{twice},
 \qquad 2^{m-1}-1\quad\text{once}.
\]
Every stem constant-rank-two SR group is one of these groups, or
$D_8$ or $Q_8$ when $m=2$.
\end{prop}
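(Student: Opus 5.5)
The plan is to treat the statement in three parts: first determine the ambivalent refinements of the star commutator map, then sort them into isomorphism orbits, and finally show that every stem constant-rank-two pencil is a star pencil, except for $m=2$.

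\emph{Refinements and the SR property.} For $\lambda\in W^*=U^*$, the scalar form is
\[
 B_\lambda((t,u),(s,v))=t\lambda(v)+s\lambda(u).
\]
It has rank two and radical $R_\lambda=\{(0,u):\lambda(u)=0\}$. For $\lambda\ne\mu$ both nonzero, $R_\lambda\cap R_\mu$ has dimension $m-3$. So \eqref{eq:class2-M-new} reads $2+2+2=2\cdot3$ and holds identically. Hence Theorem~\ref{thm:class2-SR-new} reduces the SR property to ambivalence, which by its proof is the condition $q(v)\in\beta(v,V)$.

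We split $v$ by its $t$-coordinate.
\begin{itemize}
\item For $v=(1,u)$, $\beta(v,V)=U$, so there is no condition.
\item For $v=(0,u)$, $\beta(v,V)=\langle u\rangle$. The restriction $L=q(0,\cdot)$ has zero polar form on $\{0\}\oplus U$ by \eqref{eq:polar-new}, so it is linear. The condition becomes $L(u)\in\langle u\rangle$ for every $u$.
\end{itemize}
Since $\dim U=m-1\ge2$, a linear map having every vector as an eigenvector is a scalar $\delta$. Put $w=q(1,0)$. Then polarization gives $q(t,u)=tw+\delta u+tu$. Conversely, every such $q$ satisfies the condition, so all of these groups are SR.

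\emph{Orbits and involution counts.} Next I would act by the pairs $A(t,u)=(t,\,gu+tw')$ and $B=g$, with $g\in\GL(U)$. These preserve $\beta$.
\begin{itemize}
\item If $\delta=0$, the shift by $w'$ replaces $w$ by $w+w'$. So all refinements with $\delta=0$ fall into one orbit.
\item If $\delta=1$, the shift cancels and $w$ is unchanged, while $g$ acts transitively on nonzero $w$. This leaves at most the three candidates $\delta=0$; $(\delta,w)=(1,0)$; and $(1,w\neq0)$.
\end{itemize}
To separate them, I count solutions of $q(t,u)=0$ over $V$ and multiply by $|W|=2^{m-1}$.
\begin{itemize}
\item For $\delta=0$: $t=0$ gives all $u$, and $t=1$ gives $u=w$.
\item For $(1,0)$: $t=0$ gives $u=0$, and $t=1$ gives all $u$.
\item For $(1,w\ne0)$: only $t=0,u=0$.
\end{itemize}
Subtracting the identity gives the stated counts, which separate the third orbit. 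The first two orbits have equal counts, so I need an intrinsic subgroup to separate them. For $m\ge3$, $\{0\}\oplus U$ is intrinsic: it is exactly the set of $v$ with $\dim\beta(v,V)\le1$. Its preimage is elementary abelian precisely when $\delta=0$. This last separation is the step I expect to need the most care.

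\emph{Exhaustiveness.} Let $\mathcal N=\lambda\circ\beta$ range over $W^*$. In a stem group $\mathcal N$ is an $r$-dimensional space of constant rank two, so Lemma~\ref{lem:rank2-dichotomy-new} applies and there are two cases.
\begin{itemize}
\item \emph{The case $\mathcal N\le\bigwedge^2U$ with $\dim U=3$.} Lemma~\ref{lem:wedge3-obstruction-new} excludes equality. Every form then vanishes on $U^\perp$, of dimension $m-3$, so a zero common radical forces $m=3$. A subspace of $\bigwedge^2U$ of dimension at most two is of the form $\varphi\wedge U'$, by duality in dimension three, which returns us to the other case.
\item \emph{The case $\mathcal N\le\varphi\wedge V^*$.} Write $\mathcal N=\varphi\wedge\langle\psi_1,\dots,\psi_r\rangle$ with the $\psi_i$ independent modulo $\varphi$. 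The common radical is $\ker\varphi\cap\bigcap\ker\psi_i$, so it vanishes exactly when $r=m-1$. Then $\mathcal N=\varphi\wedge V^*$, and choosing coordinates with $\varphi=t$ gives the star map.
\end{itemize}
When $m=2$ we have $r=1$, and the group is extraspecial of order $8$, hence $D_8$ or $Q_8$.
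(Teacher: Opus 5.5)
Your proposal is correct and follows the paper's proof essentially step for step. You reduce reality to $q(0,u)\in\langle u\rangle$ to obtain $q(t,u)=tu+\delta u+tw$, use the same shear action $(t,u)\mapsto(t,gu+tw')$ on $(\delta,w)$ and the same zero counts, and settle exhaustiveness with Lemma~\ref{lem:rank2-dichotomy-new} and Lemma~\ref{lem:wedge3-obstruction-new}. The only small difference is that you separate the two orbits with equal involution counts by the intrinsic subspace $\{0\}\oplus U$, whose preimage is elementary abelian exactly when $\delta=0$; the paper instead reads the three orbits directly off the stabilizer action, tacitly assuming that the shear group is the full stabilizer, which the same intrinsic subspace justifies.
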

\begin{proof}
For $t=0$, reality requires $q(0,u)\in\langle u\rangle$ for every
$u$. A linear map on a space of dimension at least two with this
property is a scalar map, giving $q(0,u)=\delta u$. For $t=1$ the
commutator image is all of $W$, so there are no further restrictions.
The stabilizer of the star space acts by
$(t,u)\mapsto(t,Au+tb)$, with induced action $A$ on $W$; it acts
on the parameters by
\[
 \delta\mapsto\delta,
 \qquad w\mapsto Aw+(\delta+1)b.
\]
This gives one orbit for $\delta=0$ and two for $\delta=1$, according
as $w=0$ or $w\ne0$. Distinct scalar polar forms have rank two and
radical intersection dimension $m-3$, so the multiplicity equation
is automatic. The zero counts of $q$ are $2^{m-1}+1$ twice and one
once; multiplying by $|W|$ and subtracting one gives the involution
counts.

For the final assertion apply Lemma~\ref{lem:rank2-dichotomy-new}.
In the first alternative a dimension-$r$ space has common radical
of dimension at least $m-r-1$, with equality for independent
factors modulo $\langle\varphi\rangle$. Zero common radical thus
forces the full star space and $r=m-1$. In the second alternative,
zero common radical forces $m\le3$. The only case not already
of star type is $m=r=3$, excluded by
Lemma~\ref{lem:wedge3-obstruction-new}. If $m=2$, the scalar
nondegenerate form gives $D_8$ and $Q_8$.
\end{proof}

In particular, a stem constant-rank-two SR group has order an odd
power of two. This statement does not apply to arbitrary class-two
SR groups: adjoining elementary abelian direct factors gives such
groups also at orders $256$ and $1024$.

\subsection{Invariants and reconstructible representatives}\label{sec:class2-invariants}
The quadratic data also give the invariant columns of the catalogue.
For a special group associated with $q:V\to W$, every element over
$v$ has square $q(v)$, independently of its $W$-coordinate. Hence
\begin{equation}\label{eq:quadratic-involutions}
 \#\{g\in S:|g|=2\}=2^r|q^{-1}(0)|-1.
\end{equation}
For each $v\in V$, the conjugacy classes in the fibre over $v$ are
cosets of $\beta(v,V)$ in $W$. Therefore
\begin{equation}\label{eq:quadratic-classes}
 k(S)=\sum_{v\in V}2^{r-\dim\beta(v,V)}.
\end{equation}
The presentation in Proposition~\ref{prop:class2-dictionary-new}
constructs an actual group representative from each retained quadratic
orbit. Its order, centre, derived subgroup, involution count and class
number can then be checked against the data. These checks validate
representatives; the orbit classification proves their nonisomorphism.
Algorithm~\ref{alg:class2-export} describes the catalogue export.

\section{Completing the class-two census at order 1024}\label{sec:remaining}
The purpose of the following computations is to turn the quadratic-map
reduction into an explicit list of groups. After the $95$ non-special
classes have been obtained by central direct factors, the special
stratum has only five possible dimension pairs. We exclude $(4,6)$
and $(5,5)$; the pencil theorem gives $18$ classes at $(8,2)$.
Quotient extensions and exact graph orbits give $28$ and $80$ classes
at $(6,4)$ and $(7,3)$. Thus there are $126$ special groups and $221$
class-two groups. The output is the catalogue
\path{sr1024_class2.csv}, with reconstructible representatives.

\subsection{Separating central direct factors}
The central splitting lemma gives a bijection
\[
 H\longmapsto H\times C_2
\]
from class-two SR groups of order 512 to non-special class-two SR
groups of order 1024 (in both cases up to isomorphism).
Surjectivity follows by splitting off one central involution outside
the derived subgroup; injectivity is cancellation in the
Krull--Remak--Schmidt theorem. This is not an assertion that every special
group is directly indecomposable. For example $D_8\times D_8$ is special.

The class-two census at smaller orders, with structural checks on its identifiers,
gives
\begin{center}
\begin{tabular}{@{}r*{7}{r}@{}}\toprule
Order&8&16&32&64&128&256&512\\\midrule
Class two&2&2&7&10&20&42&95\\
Special&2&0&5&3&10&22&53\\\bottomrule
\end{tabular}
\end{center}
The difference of consecutive class-two counts equals the special count
at the larger order. Combining this with the class-at-least-three
census of Proposition~\ref{prop:census1024} gives
\begin{equation}\label{eq:final-count}
 N_2(1024)=95+s_{10},\qquad f(1024)=1+(95+s_{10})+581=677+s_{10}.
\end{equation}
Here $s_{10}$ counts all special SR groups of order 1024, including
direct products of nonabelian special factors. It is not a count of
groups with centre of order two: such a special group would have
$\dim(G/G')=9$, impossible for a nondegenerate scalar alternating form.

\subsection{The dimension pairs and two exclusions}
For a special target $S$, the dimensions
$m=\dim(S/S')=d(S)$ and $r=\dim S'$ measure its generator number and
derived-subgroup rank. Since $|S|=2^{m+r}=1024$, we have $m+r=10$.
The group is nonabelian, so $r\ge1$. Its commutators span $S'$, giving
$r\le\binom m2$; thus $m\ge4$ and $m\le9$.
For $r=1$, zero common radical means that the scalar alternating form
is nondegenerate, forcing $m$ even. This excludes $(9,1)$.
The entire special search is consequently partitioned into
\[
 (m,r)=(4,6),(5,5),(6,4),(7,3),(8,2).
\]
This is an exhaustive partition by intrinsic group invariants, so counts
from different pairs can be added without an isomorphism comparison.

\begin{prop}\label{prop:46-exclusion}
There is no special SR-group with $(m,r)=(4,6)$.
\end{prop}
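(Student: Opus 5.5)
Since $r=6=\binom{4}{2}$, the plan is to show that the commutator space of any candidate is all of $\bigwedge^2V^*$, and then to invoke the reality obstruction of Lemma~\ref{lem:wedge3-obstruction-new}. Only ambivalence will be used; the multiplicity-free condition \eqref{eq:class2-M-new} plays no role.

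\emph{Identify the commutator space.} Let $S$ be a special SR-group with $(m,r)=(4,6)$, and let $(\beta,q)$ be its data as in Proposition~\ref{prop:class2-dictionary-new}. The polar map $\beta:\bigwedge^2V\to W$ is surjective. Both spaces have dimension $\binom42=6=r$, so $\beta$ is a linear isomorphism. Dually, the map $\lambda\mapsto B_\lambda=\lambda\circ\beta$ from $W^*$ to $\bigwedge^2V^*$ is injective, hence bijective. So the commutator space $\{B_\lambda:\lambda\in W^*\}$ equals all of $\bigwedge^2V^*$.

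\emph{Apply the obstruction.} Pick any three-dimensional $U\le V^*$, for instance the span of three coordinate functionals $x,y,z$. Then $\bigwedge^2U\le\bigwedge^2V^*$ lies in the commutator space. Lemma~\ref{lem:wedge3-obstruction-new} then says that $\beta$ admits no ambivalent quadratic refinement $q$. Concretely, let $\lambda_{xy},\lambda_{xz},\lambda_{yz}\in W^*$ be the functionals with $B_\lambda$ equal to $x\wedge y$, $x\wedge z$, $y\wedge z$. The reality equations \eqref{eq:class2-R-new}, imposed along the radicals of the combinations $zB_{xy}+yB_{xz}+xB_{yz}$, would force a cubic identity on $\mathbb F_2^3$ that cannot hold. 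Hence condition \eqref{eq:class2-R-new} of Theorem~\ref{thm:class2-SR-new} fails for every $q$ with polar map $\beta$. So $S(\beta,q)\cong S$ is not ambivalent, a contradiction.

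\emph{Main point to check.} I expect the only real care to be needed in confirming that the lemma's hypotheses hold here. The lemma speaks of scalar components $\lambda\circ\beta$ of the actual $W$-valued map, with refinements $\lambda\circ q$. Since $\lambda\circ q$ refines $B_\lambda$ for every $\lambda$, and the three forms $x\wedge y$, $x\wedge z$, $y\wedge z$ are genuinely of this shape, the hypotheses are met. The common kernel of $x,y,z$ in $V$ is one-dimensional, so the coordinate complement used in the proof of the lemma exists. The degree argument then applies verbatim, and the exclusion needs no computation.
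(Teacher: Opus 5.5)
Your argument is correct, but it excludes $(4,6)$ using a different SR condition than the paper does.

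The paper uses the multiplicity criterion \eqref{eq:class2-M-new}. Once the commutator space is all of $\Alt(\F_2^4)$, it takes the two nondegenerate forms $e_{12}+e_{34}$ and $e_{13}+e_{24}$. Their radicals meet trivially, and their sum $(e_1^*+e_4^*)\wedge(e_2^*+e_3^*)$ has rank two, so the criterion would require $4+4+2=8$, which is false.

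You use the reality condition \eqref{eq:class2-R-new} instead, through Lemma~\ref{lem:wedge3-obstruction-new}. The lemma applies because the full space contains $\bigwedge^2U$ for every three-dimensional $U\le V^*$. Your checks of its hypotheses are right. In fact $\lambda_{xy}\circ q=xy+\ell_{xy}$ holds on all of $V$, so the coordinate complement is only a convenience.

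The two routes buy different things:
\begin{itemize}
\item The paper's route depends only on $\beta$. This matches the ``rank test'' label in Theorem~\ref{thm:class2-complete} and step~2 of Algorithm~\ref{alg:class2-filter}, which rejects a space before any affine system is solved. It also shows that no special group with these dimensions has multiplicity-free tensor products, whether or not it is ambivalent.
\item Your route needs no new computation and reuses a lemma the paper already needs for the star family. It shows instead that no such special group is even ambivalent. This is the same obstruction that eliminates the full space at $(3,3)$, where the multiplicity test alone does not suffice.
\end{itemize}
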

\begin{proof}
Its commutator space would be all of $\Alt(\F_2^4)$. Write
$e_{ij}=e_i^*\wedge e_j^*$. The forms
$A=e_{12}+e_{34}$ and $B=e_{13}+e_{24}$ have ranks four and four,
their sum has rank two, and their radicals intersect trivially.
The multiplicity criterion would require $4+4+2=2\cdot4$, a contradiction.
\end{proof}

\begin{lemma}[Full-star obstruction]\label{lem:star-extension}
Let $V=\F_2\oplus U$, $\dim U\ge2$. No proper extension of the full
star commutator space $t\wedge U^*$ admits an ambivalent quadratic
refinement.
\end{lemma}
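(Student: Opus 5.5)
The plan is to reduce to the configuration of Lemma~\ref{lem:wedge3-obstruction-new}: find three coordinate functions $t,x,y$ whose three pairwise wedges are, up to harmless extra terms, available as scalar components of the commutator map. Then I will run the same cubic-polynomial argument on the three-dimensional subspace of $V$ where those extra terms do not contribute.

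\emph{Step 1: a nonzero form on $U$ in the extension.} Let $\mathcal N'\supsetneq t\wedge U^*$ be the extended commutator space. Every element of $\bigwedge^2V^*$ is uniquely $t\wedge\varphi+\omega$ with $\varphi\in U^*$ and $\omega\in\bigwedge^2U^*$. Take an element of $\mathcal N'$ outside the star space and subtract $t\wedge\varphi$, which lies in $\mathcal N'$. This gives $0\neq\omega\in\mathcal N'\cap\bigwedge^2U^*$. By the symplectic normal form over $\F_2$, choose coordinates $x,y,z_3,\dots$ on $U$ with
\[
 \omega=x\wedge y+\omega',
\]
where $\omega'$ involves only coordinates other than $x,y$. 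This uses $\dim U\ge2$.

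\emph{Step 2: pass to a quotient.} Let $q$ be an ambivalent refinement. Choose a complement in $W$ so that the projection of $W$ onto a three-dimensional quotient $\bar W$ has scalar components with polar forms $t\wedge x$, $t\wedge y$ and $\omega$. This is possible because these three forms are linearly independent elements of $\mathcal N'$. The corresponding quotient of the group is ambivalent by Proposition~\ref{prop:closure}; the common radical is allowed to become nonzero here. Moreover, the proof of \eqref{eq:class2-R-new}, namely $q(v)\in\beta(v,V)$ for every $v$, uses only the class-two structure and not stemness. Hence the projected quadratic map $\bar q$ satisfies $\lambda(\bar q(v))=0$ whenever $v$ lies in the radical of $\lambda\circ\bar\beta$.

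\emph{Step 3: the cubic contradiction.} Restrict attention to vectors $v$ in the span $V_0$ of the basis vectors dual to $t,x,y$, so all other coordinates of $v$ vanish. For such $v$, the contraction $\iota_v\omega'$ vanishes, so on $V_0$ the three components behave exactly like $t\wedge x$, $t\wedge y$, $x\wedge y$. As in Lemma~\ref{lem:wedge3-obstruction-new}, the vector $v=(t,x,y)$ lies in the radical of
\[
 y\,(t\wedge x)+x\,(t\wedge y)+t\,\omega .
\]
The contraction of this form with $v$ is $0$ on the $t,x,y$ coordinates, and its other components come only from $\iota_v\omega'=0$. Reality then forces
\[
 y\,q_{tx}+x\,q_{ty}+t\,q_\omega=0\quad\text{on }V_0\cong\F_2^3 .
\]
Restricted to $V_0$, the components are $q_{tx}=tx+\ell$, $q_{ty}=ty+\ell'$ and $q_\omega=xy+\ell''$ with linear $\ell$'s, because $\omega'$ contributes no cross terms on $V_0$. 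The left side therefore has square-free cubic coefficient $txy$ equal to $3=1$ in $\F_2$, while all other terms have degree at most two. This is impossible.

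I expect the main obstacle to be Step 3's bookkeeping. One must verify that the radical membership holds for every $v\in V_0$ including degenerate ones; the contraction computation is uniform, so this should be routine. One must also check that projecting to a quotient of $W$ legitimately transfers the reality condition. This holds because ambivalence is inherited by quotient groups and the criterion $q(v)\in\beta(v,V)$ holds for any class-two group with central elementary abelian $W$.
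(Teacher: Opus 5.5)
Your argument is correct, but it takes a different route from the paper.

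\textbf{The paper's route.} The paper works globally on $U$. It first invokes the star-family calculation to fix the star components as $tu+\delta u+tw$, and writes the refinement of the extra form $\omega$ as $Q(u)+ct+\ell(u)$. Then, for every $u\in U$, it uses the radical vector $(1,u)$ of $\omega+t\wedge\omega(u,-)$. Reality gives $Q(u)+c+\ell(u)+\omega(u,w)=0$ identically, so $Q$ would be linear, contradicting $\omega\neq0$.

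\textbf{Your route.} You put $\omega$ in symplectic normal form $x\wedge y+\omega'$ and localize to $V_0=\bigcap_{i\ge3}\ker z_i$. There the three forms $t\wedge x$, $t\wedge y$, $\omega$ restrict to a basis of $\bigwedge^2V_0^*$. The key check is that $\iota_v\omega'=0$ for $v\in V_0$. This keeps every $v\in V_0$ in the radical of $y_0\,(t\wedge x)+x_0\,(t\wedge y)+t_0\,\omega$ on all of $V$, not merely on $V_0$. The cubic argument of Lemma~\ref{lem:wedge3-obstruction-new} then goes through verbatim.

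\textbf{Comparison.} Your version needs only three scalar components and no information about $q$ at $t=0$, so it does not depend on the star-family parametrization. It also shows that the full-star obstruction is the $\bigwedge^2$-of-a-three-space obstruction in disguise. The paper's version is shorter and needs no normal form, at the cost of relying on the earlier determination of $q_{\rm st}$.

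\textbf{Minor remarks.} Step~2 is unnecessary. You can apply \eqref{eq:class2-R-new} directly in the original group to $\lambda=y_0\lambda_1+x_0\lambda_2+t_0\lambda_3\in W^*$, where $\lambda_i\circ\beta$ are your three forms. If you do pass to a quotient, what you need is the trivial fact that quotients of ambivalent groups are ambivalent. Proposition~\ref{prop:closure} is about SR groups, so it is not the right citation.
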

\begin{proof}
The star components of an ambivalent refinement have the form
$q_{\rm st}(t,u)=tu+\delta u+tw$, by the star-family calculation.
An additional scalar alternating form, after adding a star form, is
a nonzero alternating form $\omega$ on $U$. Its quadratic refinement
has the form $Q(u)+ct+\ell(u)$ with polar form $\omega$.
For each $u$ set $a=\omega(u,-)$. The vector $(1,u)$ is in the radical
of $\omega+t\wedge a$. Reality at this vector forces
\[
 Q(u)+c+\ell(u)+\omega(u,w)=0\qquad(u\in U).
\]
At $u=0$ we get $c=0$; the remaining identity makes $Q$ linear,
contradicting its nonzero polar form.
\end{proof}

\begin{prop}[Finite linear-algebra certificate]\label{prop:55-exclusion}
There is no special SR-group with $(m,r)=(5,5)$.
\end{prop}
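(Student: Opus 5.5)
The plan is to reduce the statement to a finite orbit computation on five-dimensional subspaces of $\Alt(\F_2^5)$, in the same spirit as the $(4,6)$ exclusion. By Proposition~\ref{prop:class2-dictionary-new}, a special SR group with $(m,r)=(5,5)$ is given by a quadratic map $q:V\to W$ with $V=\F_2^5$ and $W=\F_2^5$. Its polar map $\beta$ is surjective and has zero common radical. Dually, the commutator space
\[
\mathcal N=\{\lambda\circ\beta:\lambda\in W^*\}\le\Alt(V)\cong\F_2^{10}
\]
is a five-dimensional subspace whose forms have zero common radical. Both conditions of Theorem~\ref{thm:class2-SR-new} depend only on the $\GL(V)$-orbit of $\mathcal N$. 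The multiplicity condition \eqref{eq:class2-M-new} is a statement about pairs in $\mathcal N$, and the existence of an ambivalent refinement is an affine linear consistency question. So I would enumerate $\GL_5(\F_2)$-orbits of five-dimensional subspaces of $\Alt(\F_2^5)$. Equivalently, since $\Alt(\F_2^5)$ has dimension $10$, I would enumerate orbits of their five-dimensional annihilators in $\bigwedge^2V$. There are only a modest number of such orbits.

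Before computing, I would prune structurally. Every nonzero alternating form on $\F_2^5$ has rank $2$ or $4$, with radical of dimension $3$ or $1$. Take two rank-four forms $B_\lambda,B_\mu$ with distinct one-dimensional radicals. Then \eqref{eq:class2-M-new} reads
\[
4+4+\rho_{\lambda+\mu}=2\cdot5,
\]
which forces $\rho_{\lambda+\mu}=2$. Two rank-four forms with the same radical line must have a sum of rank two. Pairs involving rank-two forms give analogous constraints, with $\dim(R_\lambda\cap R_\mu)$ equal to $1$ or $2$. These rules should severely restrict $\mathcal N$.

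From there, the case analysis would run as follows.
\begin{enumerate}
\item If $\mathcal N$ has constant rank two, Lemma~\ref{lem:rank2-dichotomy-new} and Proposition~\ref{prop:star-new} say the star family would need $r=m-1=4$. So this case is impossible.
\item Otherwise, I would look for either a full star $t\wedge U^*$ with $\dim U=4$ inside $\mathcal N$, which is killed by Lemma~\ref{lem:star-extension}, or a copy of $\bigwedge^2U$ with $\dim U=3$, which is killed by Lemma~\ref{lem:wedge3-obstruction-new}.
\end{enumerate}
Here $\mathcal N$ would be a proper extension of a star space of dimension $4$, which is exactly the setting of Lemma~\ref{lem:star-extension}. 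Any orbit surviving these tests would be handled by solving the affine reality system of Theorem~\ref{thm:class2-SR-new} directly: $mr=25$ unknowns, one block of equations for each $\lambda$, and a check for inconsistency.

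The main obstacle is exhaustiveness without a clean classification of five-dimensional subspaces of $\Alt(\F_2^5)$. I would handle it as a finite certificate. In GAP, enumerate all five-dimensional subspaces up to $\GL_5(\F_2)$, or simply all of them; there are roughly $10^6$ to $10^7$, which is feasible. For each, apply two exact filters:
\begin{enumerate}
\item the pairwise multiplicity test \eqref{eq:class2-M-new}, run once per two-dimensional subspace of $W^*$;
\item for the survivors, the rank comparison for the reality system.
\end{enumerate}
The expected output is that no subspace passes both tests. I would present this as a computation whose domain is the entire Grassmannian, so no orbit-representative argument is needed. The structural lemmas above would serve as a cross-check and should explain why the survivors of the multiplicity test all contain a star or a $\bigwedge^2U$ configuration.
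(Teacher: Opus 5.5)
Your plan matches the paper's proof: an exhaustive finite certificate that applies the exact multiplicity test \eqref{eq:class2-M-new} and then the affine reality system, with Lemma~\ref{lem:star-extension} explaining the survivors (the paper finds $84$ multiplicity-free spaces, each containing a full-star hyperplane and each with an inconsistent reality system); the only difference is that the paper first shrinks the search by noting that a multiplicity-free space must contain a rank-two form, since $4+4+4=12>10$, and normalizing it to $e_{12}$, so it enumerates $\binom94_2=3\,309\,747$ spaces rather than the full Grassmannian, which has $\binom{10}{5}_2=109\,221\,651$ members, not $10^6$--$10^7$. One correction to your pruning: two rank-four forms with the same radical line cannot occur together at all, because \eqref{eq:class2-M-new} would require $4+4+\rho_{\lambda+\mu}=2(5-1)$, i.e.\ $\rho_{\lambda+\mu}=0$, so the condition does not force rank two as you state.
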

\begin{proof}
Every nonzero alternating form on $\F_2^5$ has rank two or four.
A multiplicity-free space of dimension at least two contains a
rank-two form: otherwise the sum of ranks for any independent pair
would be twelve, exceeding twice the ambient dimension. Since
$\GL_5(2)$ is transitive on rank-two forms, normalize one to $e_{12}$.

Spaces of dimension five containing $e_{12}$ correspond bijectively
to four-dimensional subspaces of the nine-dimensional quotient
$\Alt(\F_2^5)/\langle e_{12}\rangle$. There are
$\binom94_2=3\,309\,747$ such spaces. The accompanying exact
certificate in Algorithm~\ref{alg:small-exclusion} enumerates their unique reduced
row echelon descriptions, rejecting an entire branch as soon as the
multiplicity equation fails for one triple. It accounts for all
completed subspaces by an exact branch-size sum. Precisely 84 spaces
survive this filter. Each contains a full-star hyperplane, as verified
by testing the common wedge factor of its fifteen rank-two forms.
Lemma~\ref{lem:star-extension} excludes all 84. Independently, solving
the affine reality system for each survivor returns inconsistency.
\end{proof}

The certificate includes the following control cases. These are
coordinate-subspace counts, not numbers of group isomorphism types.
\begin{center}\small
\begin{tabular}{@{}ccrrr@{}}\toprule
$(m,r)$&Normalization&Subspaces&MF&MF and real refinement\\\midrule
$(3,3)$&none&1&1&0\\
$(4,4)$&none&651&0&0\\
$(5,4)$&contains $e_{12}$&788035&1691&3\\
$(5,5)$&contains $e_{12}$&3309747&84&0\\\bottomrule
\end{tabular}
\end{center}
The three coordinate spaces in the $(5,4)$ control case are three
full stars in a single $\GL_5(2)$-orbit. Each has 32 ambivalent
refinements; the three group types arise only from the subsequent
stabilizer action on those refinements.

It follows that
\begin{equation}\label{eq:remaining-three}
 s_{10}=s_{6,4}+s_{7,3}+18.
\end{equation}
where $s_{m,r}$ counts special SR groups with these dimensions.
The pencil theorem completes the $(8,2)$ branch. We now enumerate
the admissible systems of dimensions $(6,4)$ and $(7,3)$.

\subsection{Quotient extensions and exact graph orbits}
The key reduction is to extend the polar spaces of known quotients,
rather than enumerate a full Grassmannian or cohomology space.

\begin{thm}[Coverage by polar extensions of order-$512$ quotients]
\label{thm:quotient-polar-complete}
Fix $m+r=10$ with $r\ge2$, and suppose that $\mathcal P$ contains
one representative of every class-two SR group of order $512$ with
$\dim(P/P')=m$. For $P\in\mathcal P$, extract its scalar commutator
space
\[
 \mathcal N_P\le\Alt(P/P'),\qquad \dim\mathcal N_P=r-1.
\]
Choose coordinates $P/P'\cong\mathbb F_2^m$ separately for each parent.
Then every special SR group of order $1024$ with dimensions $(m,r)$
is represented by a quadratic refinement of a space
\[
 \mathcal N=\mathcal N_P+\langle B\rangle,
 \qquad 0\ne B+\mathcal N_P\in\Alt(\mathbb F_2^m)/\mathcal N_P,
\]
for some $P\in\mathcal P$. It is sufficient to enumerate one
representative from each orbit of these nonzero quotient vectors under
any subgroup $H_P\le\operatorname{Stab}_{\GL(m,2)}(\mathcal N_P)$, and subsequently
remove equivalences of the resulting full spaces under $\GL(m,2)$.
\end{thm}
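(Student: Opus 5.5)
Let $S$ be a special SR group of order $1024$ with invariants $(m,r)$ and $r\ge2$. I take quotients of $S$ by central subgroups of order two. By Proposition~\ref{prop:class2-dictionary-new}, $S$ is $S(\beta,q)$ for some admissible quadratic map $q:V\to W$ with $\dim V=m$, $\dim W=r$. Its scalar commutator space is
\[
 \mathcal N=\{\lambda\circ\beta:\lambda\in W^*\}\le\Alt(V).
\]
Because the common radical of $\beta$ is zero and $\beta$ is surjective, $\lambda\mapsto\lambda\circ\beta$ is injective, so $\dim\mathcal N=r$.

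\textbf{Choosing the quotient.} I pick a nonzero central element $z\in W$ and put $P=S/\langle z\rangle$, which has order $512$. By Proposition~\ref{prop:closure}, $P$ is SR. Since $r\ge2$, the image $W/\langle z\rangle$ is nonzero, and it still contains $P'$ because $\beta$ maps onto $W$. So $P$ is nonabelian. It has class two, since $P'\le Z(P)$ is inherited from $S$.

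\textbf{Identifying $\dim(P/P')$.} Since $q$ and $\beta$ take values in $W$, we have $P'=W/\langle z\rangle$ and $P^2\le P'$. Hence $P/P'\cong V$ and $\dim(P/P')=m$, so $P$ is represented in $\mathcal P$.

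\textbf{The commutator space of $P$.} Let $\mathcal N_P$ be the scalar commutator space of $P$. It is the set of $\lambda\circ\beta$ with $\lambda(z)=0$, a hyperplane of $\mathcal N$ of dimension $r-1$. I do not need $P$ to be special. Because $Z(P)$ may be larger than $P'$, I read off $\mathcal N_P$ from the commutator map $V\times V\to P'$ itself, not through a stem factor. In particular $\mathcal N_P$ is determined by $P$ up to the choice of coordinates on $P/P'$.

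\textbf{Transporting to the representative.} Let $P_0\in\mathcal P$ be the chosen representative, with $\phi:P\to P_0$ an isomorphism. Then $\phi$ induces some $A\in\GL(m,2)$ relating the two sets of coordinates on $P/P'\cong V$, and $A$ carries $\mathcal N_P$ onto $\mathcal N_{P_0}$. Replacing $q$ by $q\circ A^{-1}$ gives a group isomorphic to $S$, by Proposition~\ref{prop:class2-dictionary-new}. We may therefore assume $\mathcal N\supseteq\mathcal N_{P_0}$ in the fixed coordinates. As $\dim\mathcal N=\dim\mathcal N_{P_0}+1$, we get
\[
 \mathcal N=\mathcal N_{P_0}+\langle B\rangle
\]
for any $B\in\mathcal N\setminus\mathcal N_{P_0}$, and its class $B+\mathcal N_{P_0}$ is nonzero. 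Under the dictionary, $S$ is then a quadratic refinement of this extended space: a basis of $\mathcal N$ determines $W$ up to $\GL(W)$, and the refinement $q$ lives on it.

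\textbf{Reducing by $H_P$.} Take $h\in H_P\le\operatorname{Stab}(\mathcal N_P)$. Then $h$ carries $\mathcal N_P+\langle B\rangle$ to $\mathcal N_P+\langle h\cdot B\rangle$, and this action is induced on the quotient vectors in $\Alt(\mathbb F_2^m)/\mathcal N_P$. Refinements of $g\mathcal N$ correspond bijectively to refinements of $\mathcal N$ through precomposition, producing isomorphic groups. So one $B$ per $H_P$-orbit suffices, and all remaining coincidences are $\GL(m,2)$-equivalences of the full spaces.

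\textbf{Main obstacle.} The delicate step is the dimension bookkeeping in the second and third steps: the commutator space of $P$ must have dimension exactly $r-1$ and sit inside $\mathcal N$ as the annihilator of $z$, and its generator count must remain $m$ even when $P$ acquires a central direct factor. I would handle this by working throughout with the commutator map of $P$ itself and never with its stem factor.
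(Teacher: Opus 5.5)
Your proposal is correct and follows essentially the same route as the paper: you quotient $S$ by a central line $L\le W$, note that $P=S/L$ is a possibly non-special class-two SR group of order $512$ with $P'=W/L$ and $P/P'\cong V$, identify $\mathcal N_P$ with the hyperplane of $\mathcal N$ annihilating $L$, transport coordinates through an isomorphism to the representative in $\mathcal P$, and reduce by orbits of a subgroup of the stabilizer. The one small slip is the phrase that $W/\langle z\rangle$ ``still contains $P'$''; what you need, and state in the next step, is the equality $P'=W/\langle z\rangle$.
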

\begin{proof}
Let $S$ be a target group, with $W=S'=Z(S)$ and $V=S/S'$.
Choose any line $L\le W$. Quotient closure gives an SR group
$P=S/L$ of order $512$, and
\[
 P'=W/L,\qquad P/P'\cong V.
\]
Since $r\ge2$, the group $P$ remains nonabelian of class two.
It may have a centre larger than $P'$, which is why all relevant
parents, rather than only special parents, must be included.
The dual injection $(W/L)^*\hookrightarrow W^*$ identifies the
scalar commutator space of $P$ with a hyperplane of that of $S$.
An isomorphism from $P$ to its representative in $\mathcal P$ induces
a change of basis on $V$, placing this hyperplane in the chosen
coordinates. The full target space is then a one-dimensional extension
of $\mathcal N_P$ as displayed.

Every subgroup of the stabilizer of $\mathcal N_P$ acts linearly on the quotient
$\Alt(V)/\mathcal N_P$ by pullback. Quotient vectors in the same
orbit give equivalent full spaces, so retaining one orbit
representative loses no target; using a proper subgroup can only leave
additional equivalent representatives. Over $\mathbb F_2$, a nonzero quotient
vector specifies its one-dimensional subspace uniquely.
Finally, different parents or parent orbits can lead to equivalent
full spaces; the stated global equivalence step removes these repeats.
For each retained full space, all quadratic refinements satisfying
the reality equations must still be considered.
\end{proof}

\begin{remark}[Coverage certificates for subgroup orbit reduction]
The subgroup $H_P$ need not be the full parent stabilizer. Each
supplied generator is verified to be invertible and to preserve
$\mathcal N_P$. Since the multiplicity, common-radical and reality
tests are invariant under this group, one may apply them first and
traverse only surviving orbits. Their orbit sizes must sum to the
number of successful extensions in the corresponding exhaustive
quotient-vector loop. Global canonicalization of full target spaces,
and the full target stabilizer action on quadratic refinements,
then remove all remaining equivalences.
\end{remark}

\begin{prop}[A faithful graph encoding of alternating tensors]
\label{prop:polar-graph}
Let $V=\mathbb F_2^m$, $W=\mathbb F_2^r$, and let
$\beta:\bigwedge^2V\to W$ be linear. Construct a graph
$\mathcal G(\beta)$ with the following five vertex colours:
\begin{enumerate}
\item the nonzero vectors of $V$;
\item the zero vector of $W$, as one distinguished vertex;
\item the nonzero vectors of $W$;
\item the two-dimensional subspaces of $V$;
\item the two-dimensional subspaces of $W$.
\end{enumerate}
A vertex $L=\langle x,y\rangle$ of the fourth colour is adjacent
to $x,y,x+y$ and to the vertex $\beta(x,y)\in W$.
A vertex $M$ of the fifth colour is adjacent to the three nonzero
vectors of $M$. There are no other edges.
Then colour-preserving graph isomorphisms
$\mathcal G(\beta)\to\mathcal G(\widetilde\beta)$ are in bijection
with the pairs $(A,D)\in\GL(V)\times\GL(W)$ satisfying
\[
 \widetilde\beta(Ax,Ay)=D\beta(x,y)
 \qquad(x,y\in V).
\]
If $q:V\to W$ is a quadratic refinement of $\beta$, add an edge
from each $x\ne0$ in $V$ to $q(x)\in W$. The resulting graph
$\mathcal G(\beta,q)$ classifies the pairs $(\beta,q)$ under the same
linear equivalence, with the additional condition
$\widetilde q(Ax)=Dq(x)$.
\end{prop}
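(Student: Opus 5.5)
The plan is to show that the combinatorial structure of $\mathcal G(\beta)$ recovers $V$ and $W$ together with their vector-space structures and the map $\beta$. First, any pair $(A,D)$ satisfying the displayed identity induces a graph isomorphism. We define it as follows: send a nonzero $x$ to $Ax$; send $0_W$ to $0_W$; send a nonzero $w$ to $Dw$; send a plane $L$ to $AL$; send a plane $M$ to $DM$. Adjacency is preserved: the three nonzero vectors of $L$ go to those of $AL$, and $\beta(x,y)$ goes to $D\beta(x,y)=\widetilde\beta(Ax,Ay)$. This assignment is injective, since $(A,D)$ can be read off from the action on the first and third colours.

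For surjectivity, take a colour-preserving isomorphism $\phi$. Let $A$ be its restriction to the first colour and $D$ its restriction to the second and third colours, with $D(0)=0$ forced because the second colour is a single vertex.

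\begin{itemize}
\item \textbf{$A$ is linear.} The fourth-colour vertices are exactly the planes of $V$. Their neighbours inside the first colour are the triples $\{x,y,x+y\}$, and these neighbours are never from the fifth colour. So $\phi$ carries each triple $\{x,y,x+y\}$ of nonzero vectors lying in a plane to such a triple. Hence for $x\ne y$ we get $A(x+y)=Ax+Ay$, with the planes forming the lines of the projective space $\mathrm{PG}(V)$. Over $\mathbb F_2$, a bijection of nonzero vectors that preserves all such triples, extended by $0\mapsto0$, is additive, and therefore linear.
\item \textbf{$D$ is linear.} The same argument applies in $W$ using the fifth colour.
\item \textbf{The tensor identity.} Each plane vertex $L=\langle x,y\rangle$ has exactly one neighbour among the second and third colours, namely $\beta(x,y)$, which may be $0_W$ (that is the reason for adding the distinguished zero vertex). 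The image vertex $\phi(L)$ is a plane of $V$ whose first-colour neighbours are $Ax,Ay,Ax+Ay$, so $\phi(L)=AL$. Its unique $W$-neighbour is $\widetilde\beta(Ax,Ay)$, so comparing neighbours gives $\widetilde\beta(Ax,Ay)=D\beta(x,y)$. For $x=y$ or $x=0$ both sides vanish, by alternation.
\end{itemize}

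This yields $(A,D)$, and the two constructions are mutually inverse. The inverse property is immediate, since $\phi$ is determined by its action on the first and third colours together with the forced images of the plane vertices.

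For $\mathcal G(\beta,q)$, the additional edges join first-colour vertices to second- or third-colour vertices. These do not interfere with recovering $A$ and $D$, because the plane vertices still recover the projective lines. Each nonzero $x$ has exactly one neighbour in $W$, namely $q(x)$, so preserving these edges is equivalent to $\widetilde q(Ax)=Dq(x)$ for $x\ne0$. At $x=0$ this identity is automatic, since $q(0)=0$.

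The main point needing care is the linearity step. One must also remember the degenerate situations: when $\dim V\le1$ there are no planes, but then any bijection of the nonzero vectors is trivially linear. For $V$ of dimension at least two, the argument above goes through.
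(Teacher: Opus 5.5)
Your proof is correct and follows the paper's argument. Both recover $A$ and $D$ as additive bijections from the line-incidence vertices of the fourth and fifth colours. Both then read off $\widetilde\beta(Ax,Ay)=D\beta(x,y)$ from the unique $W$-neighbour of each plane vertex, and observe that the extra $q$-edges are separated by their endpoint colours. Your explicit handling of the degenerate cases ($\dim V\le1$, $q(0)=0$) and of mutual inverseness only spells out details that the paper leaves implicit.
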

\begin{proof}
The construction is well defined: changing the ordered basis
$(x,y)$ of a two-dimensional subspace over $\mathbb F_2$ leaves
$\beta(x,y)$ unchanged. A compatible pair $(A,D)$ plainly induces
a colour-preserving graph isomorphism.

Conversely, a colour-preserving graph isomorphism permutes the
nonzero $V$-points while preserving every triple $\{x,y,x+y\}$.
Extend this permutation by fixing zero. The image of the third point
on the unique line through distinct nonzero $x,y$ is the sum of their
images. Thus the permutation is additive, including the cases
$x=y$ or one argument zero, and hence is an element $A\in\GL(V)$.
The same argument using the fifth colour, together with the
distinguished zero, gives $D\in\GL(W)$.
The fourth-colour vertex at $\langle x,y\rangle$ must go to that
at $\langle Ax,Ay\rangle$; its unique neighbour of a $W$-point
colour consequently yields
$\widetilde\beta(Ax,Ay)=D\beta(x,y)$.
All line vertices are determined by their point neighbours, so
there is no further freedom in the graph isomorphism.
For the augmented graph, the added point-to-point edges are
distinguished from the existing incidence edges by their endpoint
colours, and give exactly $\widetilde q(Ax)=Dq(x)$.
\end{proof}

\begin{corl}[Exact refinement orbits]\label{cor:graph-refinement-orbits}
Fix a surjective tensor $\beta$ with zero common radical and
scalar space $\mathcal N$ satisfying the multiplicity-free criterion,
and let $\mathcal Q_\beta$ be the affine solution space of its
reality equations. The isomorphism types of SR groups over this
polar space are the orbits of $\mathcal Q_\beta$ under
$\operatorname{Aut}(\mathcal G(\beta))$. For a graph automorphism
with point actions $(A,D)$, one may use the refinement action
\[
 q\longmapsto D^{-1}\circ q\circ A.
\]
Together with Theorem~\ref{thm:quotient-polar-complete} and exact
graph canonicalization, this yields a complete isomorphism census
from a complete parent list.
\end{corl}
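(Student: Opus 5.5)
The corollary is obtained by putting together Proposition~\ref{prop:class2-dictionary-new}, Theorem~\ref{thm:class2-SR-new} and Proposition~\ref{prop:polar-graph}. The work has three parts: fix the relevant isomorphism types, identify the acting group with the graph automorphism group, and check that the stated formula really is an action preserving $\mathcal Q_\beta$.

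\emph{Step one.} Fix $\beta$. Consider special SR groups whose polar tensor is equivalent to $\beta$. Any group isomorphism preserves the commutator map, so an isomorphism between two such groups induces a pair $(A,D)$ with $\widetilde\beta(Ax,Ay)=D\beta(x,y)$. After moving both groups to the fixed representative $\beta$, we may take $\widetilde\beta=\beta$. By Proposition~\ref{prop:class2-dictionary-new}, every such group is isomorphic to $S(\beta,q)$ for some refinement $q$ of this fixed $\beta$. Two groups $S(\beta,q)$ and $S(\beta,q')$ are isomorphic exactly when some $(A,D)\in\GL(V)\times\GL(W)$ satisfies $q'(Av)=Dq(v)$. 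Such a pair automatically preserves the polar map, so it lies in the stabilizer $\Gamma_\beta$ of $\beta$.

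By Theorem~\ref{thm:class2-SR-new}, the multiplicity condition \eqref{eq:class2-M-new} depends only on $\beta$ and holds by hypothesis. Hence $S(\beta,q)$ is SR exactly when $q$ satisfies the reality equations \eqref{eq:class2-R-new}, that is, when $q\in\mathcal Q_\beta$. So the isomorphism types in question are the $\Gamma_\beta$-orbits on $\mathcal Q_\beta$.

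\emph{Step two.} Apply Proposition~\ref{prop:polar-graph} with $\widetilde\beta=\beta$. It gives a bijection between $\operatorname{Aut}(\mathcal G(\beta))$ and $\Gamma_\beta$. This bijection is multiplicative: the pair attached to a composite automorphism is read off from the point actions, and those compose. So the two groups have the same orbits on $\mathcal Q_\beta$.

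\emph{Step three.} Show that $q\mapsto D^{-1}\circ q\circ A$ is an action of $\Gamma_\beta$ on refinements and that it preserves $\mathcal Q_\beta$.
\begin{itemize}
\item The image is a refinement of $\beta$. By \eqref{eq:polar-new}, its polar map is $D^{-1}\beta(Ax,Ay)=\beta(x,y)$.
\item Reality is preserved. If $v\in R_\lambda$ for $\beta$, then $Av\in R_{\lambda\circ D^{-1}}$, so the reality condition transports.
\item The formula lies in the correct equivalence class. The relation $q'(Av)=Dq(v)$ reads $q=D^{-1}q'A$, so $q$ and $q'$ are related by this map for the pair $(A,D)$.
\item Orbits are unaffected by direction. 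The formula is a right action; replacing $(A,D)$ by its inverse gives a left action with the same orbits.
\end{itemize}

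\emph{Step four.} Assemble the census. Theorem~\ref{thm:quotient-polar-complete} supplies every polar space that can occur for a target. Exact graph canonicalization of the graphs $\mathcal G(\beta)$ keeps one $\beta$ per $\GL(V)\times\GL(W)$-class, so the tensors kept are pairwise inequivalent. Groups over inequivalent tensors cannot be isomorphic, and within one tensor the orbit count is exact by the steps above. Together these give a complete census with no repeats.

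The main obstacle is bookkeeping rather than depth. One must make sure the orbit problem is posed over a single fixed representative $\beta$ and not over arbitrary equivalent tensors. One must also check that the correspondence in Proposition~\ref{prop:polar-graph} is a group isomorphism and not merely a bijection; I would verify this from the fact that the pair $(A,D)$ is recovered from the point permutations.
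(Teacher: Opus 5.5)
Your proposal is correct and follows the paper's own argument. Proposition~\ref{prop:polar-graph} identifies the graph automorphisms with the compatible pairs for $\beta$. The action $q\mapsto D^{-1}\circ q\circ A$ preserves the polar tensor and the reality equations. Proposition~\ref{prop:class2-dictionary-new} together with Theorem~\ref{thm:class2-SR-new} then identifies the orbits on $\mathcal Q_\beta$ with SR isomorphism types. Your additional checks (that the correspondence is a group isomorphism, and that the right action has the same orbits as the dictionary relation $q'(Av)=Dq(v)$) spell out what the paper's short proof leaves implicit.
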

\begin{proof}
By Proposition~\ref{prop:polar-graph}, graph automorphisms give
exactly the compatible linear pairs for $\beta$. The displayed
action preserves its polar tensor, and sends an ambivalent
refinement to another ambivalent refinement. It is generally
affine in the correction coordinates relative to a fixed
quadratic refinement. The quadratic-map classification of special
groups identifies its orbits with group isomorphism types.
\end{proof}

For the $(6,4)$ target, all 51 relevant order-512 groups are used,
including 14 non-special groups. They give nine distinct coordinate
polar spaces. The $(7,3)$ target uses 33 groups, including 22
non-special groups, and gives eight coordinate polar spaces.
For each parent, the numbers of nonzero vectors of the quotient
$\Alt(V)/\mathcal N_P$ are respectively
\[
 2^{15-3}-1=4095,\qquad 2^{21-2}-1=524287.
\]
Algorithm~\ref{alg:quotient-orbits} visits every such vector
and applies the exact rank test, the zero-common-radical condition,
and the affine reality test. Coordinate duplicates are removed by
reduced row echelon form. Its results are
\begin{center}\small
\begin{tabular}{@{}crrrr@{}}\toprule
$(m,r)$&Parent spaces&Extensions&Distinct surviving spaces&Refinements per space\\\midrule
$(6,4)$&9&36855&356&64\\
$(7,3)$&8&4194296&290488&128\\\bottomrule
\end{tabular}
\end{center}
These are intermediate counts of coordinate objects, not group counts.
To reduce the second list before global graph canonicalization,
the parent-orbit step of Algorithm~\ref{alg:quotient-orbits} uses verified invertible
transformations preserving each parent space. Any subgroup of its
stabilizer is sufficient for coverage. The resulting orbits partition
the successful quotient extensions with the exact per-parent sizes
\[
222208,\ 43008,\ 17984,\ 1008,\ 606,\ 1632,\ 300,\ 3744.
\]
Their sum is 290490, slightly larger than the coordinate-distinct
count because different parents can produce the same space.
One verified subgroup calculation gives 45 parent orbits and 44
distinct coordinate representatives; using the full parent stabilizers
gives 25 parent orbits and 24 coordinate representatives. Both covers
give the same eight global polar orbits.

Graph canonicalization and automorphism generators are computed with
nauty through Pynauty \cite{McKayPiperno2014,Pynauty}.
The polar graphs have 765 vertices for $(6,4)$ and 2809 vertices for
$(7,3)$. Every returned generator is checked on all $V$- and
$W$-points for linearity and against the original polar tensor.
For each polar representative the code solves the reality equations
again, computes orbits of the complete affine solution space, and
checks that the orbit sizes sum to 64 or 128 respectively. Canonical
forms of the augmented quadratic graphs independently separate the
retained refinement representatives. Exact canonical bytes are used
for equivalence; recorded SHA-256 digests identify the certificates
and input files, rather than serving as numerical group fingerprints.

Let $f(n)$ denote the number of isomorphism classes of SR groups of
order $n$. In particular, $f(1024)$ counts all SR groups of order
$1024$, including the elementary abelian group.

\begin{thm}[The order-1024 class-two completion]\label{thm:class2-complete}
Using the complete class-two SR census at order $512$ from
Section~\ref{sec:census}, the quotient-extension computation and the
pencil classification give the following special-group classification:
\[
\begin{array}{c|r|r|l}
(m,r)&\text{polar orbits}&\text{group classes}&
 \text{refinement orbits per polar orbit}\\\hline
(4,6)&0&0&\text{excluded by the rank test}\\
(5,5)&0&0&\text{excluded by the reality test}\\
(6,4)&5&28&6,7,6,6,3\\
(7,3)&8&80&7,6,10,6,18,12,12,9\\
(8,2)&4&18&4,3,6,5\\
(9,1)&0&0&\text{excluded by parity}.
\end{array}
\]
There are therefore $126$ special SR groups and $221$ class-two SR
groups of order $1024$. Together with the class-at-least-three census
of Proposition~\ref{prop:census1024}, this gives
\[
 \boxed{f(1024)=1+95+126+581=803.}
\]
The total number of SR groups at even orders from $2$ through $2000$
is $7086+803=7889$.
\end{thm}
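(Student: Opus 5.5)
The proof assembles results already established, plus the outputs of the quotient-extension and graph-orbit computations for the two remaining dimension pairs.

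\textbf{Partition.} Every class-two SR group of order $1024$ is $S\times C_2^j$ with $S$ special, and $S$ and $j$ are uniquely determined (Lemma~\ref{lem:class2-split-new}). So the class-two count splits into two disjoint parts. The non-special part is in bijection with the $95$ class-two SR groups of order $512$, via $H\mapsto H\times C_2$ and Krull--Remak--Schmidt cancellation. The special part is $s_{10}$. By the dimension analysis preceding Proposition~\ref{prop:46-exclusion}, $s_{10}$ is the sum of the counts over the intrinsic pairs $(m,r)\in\{(4,6),(5,5),(6,4),(7,3),(8,2),(9,1)\}$. Different pairs cannot contain isomorphic groups, since $m=d(S)$ and $r=\dim S'$.

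\textbf{Settled rows.} The $(4,6)$, $(5,5)$ and $(9,1)$ rows vanish by Proposition~\ref{prop:46-exclusion}, Proposition~\ref{prop:55-exclusion} and the parity argument for nondegenerate scalar forms. The $(8,2)$ row is Theorem~\ref{thm:pencil-eighteen-new}, which gives $4+3+6+5=18$. This leaves the identity \eqref{eq:remaining-three}.

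\textbf{The $(6,4)$ and $(7,3)$ rows.} For these pairs I invoke Theorem~\ref{thm:quotient-polar-complete}, with $\mathcal P$ the full list of class-two SR groups of order $512$ having the relevant $m$. This list has $51$ groups for $m=6$ and $33$ for $m=7$, and it is complete by the catalogue census of Section~\ref{sec:census}. Every target space then arises as $\mathcal N_P+\langle B\rangle$. The exhaustive loop applies the multiplicity criterion \eqref{eq:class2-M-new}, the zero-common-radical condition, and solvability of the affine reality system \eqref{eq:class2-R-new}. This yields $356$ and $290488$ coordinate spaces respectively.

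Next, global graph canonicalization (Proposition~\ref{prop:polar-graph}) reduces these to $5$ and $8$ polar orbits. Here the graph isomorphisms correspond bijectively to compatible pairs $(A,D)$, so canonical forms separate orbits exactly. For each orbit representative, Corollary~\ref{cor:graph-refinement-orbits} identifies group isomorphism types with orbits of $\operatorname{Aut}(\mathcal G(\beta))$ on the affine refinement space of size $64$ or $128$. Summing the tabulated orbit counts gives $6+7+6+6+3=28$ and $7+6+10+6+18+12+12+9=80$. Checking that each orbit-size sum equals $64$ or $128$ certifies exhaustiveness. Canonical forms of the augmented graphs $\mathcal G(\beta,q)$ certify that the refinement orbits are distinct.

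\textbf{Assembly and main obstacle.} Combining the rows gives $s_{10}=28+80+18=126$, so the class-two count is $95+126=221$. Adding the abelian group and the $581$ groups of Proposition~\ref{prop:census1024}, via \eqref{eq:final-count}, gives $f(1024)=803$. Adding the $7086$ catalogue types from the other orders gives $7889$.

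The delicate step is the coverage argument for the $(7,3)$ computation. Only a verified subgroup $H_P$ of each parent stabilizer is used, so the argument must confirm two things. First, the orbit sizes sum to the number of successful extensions. Second, the final equivalence is decided by exact nauty canonical bytes together with generators checked against the tensor, not by invariants. I would stress that using a smaller $H_P$ only creates redundancy, which the global canonicalization then removes. The cross-check with full stabilizers, which gives the same eight polar orbits, serves as confirmation.
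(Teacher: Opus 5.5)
Your proposal is correct and follows essentially the same route as the paper's proof. Both use the intrinsic $(m,r)$ partition with the three exclusions, the pencil count $18$ at $(8,2)$, quotient-extension coverage from the complete order-$512$ parent list followed by exact graph-orbit computations giving $28$ and $80$, and the central-splitting bijection contributing the $95$ non-special groups. Your point that a proper subgroup $H_P$ of the parent stabilizer only introduces redundancy, which global canonicalization removes, matches the paper's coverage remark.
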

\begin{proof}
We establish exhaustiveness of the special-group list, the absence
of repetitions, and then the count of the remaining strata.

First let $S$ be any special SR group of order $1024$. Its intrinsic
parameters satisfy
\[
 m+r=10,\qquad 1\le r\le\binom m2.
\]
The second inequality holds because its commutators span $S'$.
It follows that $4\le m\le9$, giving precisely the six rows of the
table. If $(m,r)=(9,1)$, the scalar commutator form would be
nondegenerate on a space of odd dimension, which is impossible.
At $(4,6)$ the scalar commutator space is the whole alternating-form
space on $\mathbb F_2^4$; Proposition~\ref{prop:46-exclusion}
exhibits a pair of forms violating the necessary tensor-product
multiplicity equation. At $(5,5)$,
Proposition~\ref{prop:55-exclusion} exhausts all normalized candidate
spaces, and its finite certificate shows that none of the spaces
passing the multiplicity test admits an ambivalent refinement.
These arguments exclude the first, second and sixth rows.

For $(8,2)$, Proposition~\ref{prop:MF-pencil-blocks-new} classifies
every possible commutator pencil compatible with the multiplicity
condition into four orbits. Each carries all of its $256$
ambivalent refinements. Theorem~\ref{thm:pencil-eighteen-new}
then determines the full refinement orbits over these pencils,
with counts $4,3,6,5$. The first three counts follow from the
scalar Arf invariants and the permitted permutations of equal
blocks. For the fourth, the verified tensor automorphisms partition
all $256$ refinements into five orbits, and the intrinsic invariants
in that proof distinguish these five orbits even under the full
stabilizer. Hence the count for this row is exactly $18$.

It remains to justify the computed rows $(6,4)$ and $(7,3)$.
Choose a central subgroup $L$ of order two in $S'=Z(S)$.
Then $P=S/L$ is SR by quotient closure, has order $512$, and
satisfies
\[
 P'=S'/L,\qquad \dim(P/P')=m,\qquad \dim P'=r-1.
\]
In particular, $P$ is nonabelian of class two because $r\ge2$.
It therefore occurs in the complete parent census used by the
algorithm. This statement requires the non-special parents too:
quotienting a special group by $L$ can enlarge its center relative
to its derived subgroup.

On dualizing $S'\to S'/L$, the scalar commutator space of $P$
becomes a hyperplane of that of $S$. Consequently, in coordinates
for the selected parent, the target commutator space is
$\mathcal N_P+\langle B\rangle$ for a nonzero vector of
$\Alt(\mathbb F_2^m)/\mathcal N_P$. This is the coverage assertion
of Theorem~\ref{thm:quotient-polar-complete}. The exhaustive loops
in Algorithm~\ref{alg:quotient-orbits} examine all $4095$ vectors
for each of the nine parent spaces at $(6,4)$ and all $524287$
vectors for each of the eight parent spaces at $(7,3)$.
Reduction by a verified subgroup of a parent stabilizer preserves
this coverage: it merely selects an equivalent representative of
each orbit of extension vectors.

Every surviving full space is required to have zero common radical
and to satisfy the multiplicity equation. For that space the code
solves the complete affine system for ambivalence. By
Theorem~\ref{thm:class2-SR-new}, these conditions are necessary and
sufficient for the constructed special group to be SR. Thus the
filtering discards no target and introduces no non-SR group.

The remaining issue is isomorphism, since extensions from different
parents may coincide. Proposition~\ref{prop:polar-graph} identifies
the colour-preserving isomorphisms of the polar graphs exactly
with the compatible linear maps on $V$ and $W$. Global graph
canonicalization therefore gives one representative for each full
polar orbit, regardless of its parent. Over each such representative,
Corollary~\ref{cor:graph-refinement-orbits} identifies group
isomorphisms exactly with the graph-automorphism orbits on the
entire affine refinement space. The stored execution certificates
give five and eight polar orbits, respectively. Their refinement
orbit counts are
\[
 \begin{aligned}
 (6,4):&\quad 6+7+6+6+3=28,\\
 (7,3):&\quad 7+6+10+6+18+12+12+9=80.
 \end{aligned}
\]
For every polar representative, the refinement orbit sizes sum to
the complete affine-space size, respectively $64$ and $128$.
Canonical forms of the augmented quadratic graphs also distinguish
the retained representatives. These are exact equivalence and
coverage checks, rather than comparisons of numerical group
invariants. The implementations and execution certificates are
specified in Appendix~\ref{app:implementation}.

The pairs $(m,r)$ are group invariants, so distinct rows cannot
overlap. Adding the three nonempty rows gives
\[
 s_{10}=28+80+18=126.
\]
By Lemma~\ref{lem:class2-split-new}, every non-special class-two
SR group $G$ of order $1024$ has a central involution outside $G'$
and hence splits as $K\times C_2$, with $K$ an SR group of order
$512$ and nilpotency class two. Conversely every such direct
product is SR and non-special. Direct-product cancellation makes
$K\mapsto K\times C_2$ injective on isomorphism classes, so the
$95$ class-two groups in the order-$512$ census contribute exactly
$95$ further classes. Thus the class-two total is $95+126=221$.

Finally, an abelian SR group has exponent at most two, so the only
abelian group in this order is $C_2^{10}$. Proposition~\ref{prop:census1024}
supplies all $581$ groups of nilpotency class at least three.
The abelian, class-two and higher-class strata are disjoint and
exhaust all groups of this order. Hence
$f(1024)=1+221+581=803$. The completed census at the other even
orders contributes $7086$ classes, giving the asserted total $7889$.
\end{proof}

The new representatives are supplied both as quadratic data and as
GAP polycyclic presentation codes. They are individually checked in
GAP for order, nilpotency class, derived subgroup, centre and
Wigner's equality, and each stored code is reconstructed and checked.
As a regression test for exact equivalence, the known groups
$[512,6249567]$, $[512,6249622]$ and $[512,6249623]$ have the same
polar graph type but three different quadratic graph types. This
specifically checks that the two groups sharing standard numerical
invariants are retained as distinct classes.

\subsection{Explicit representatives for the class-two census}
\label{sec:class2-catalogue}
The class-two classification supplies explicit groups, not only a count.
The file \path{sr1024_class2.csv} contains one representative of each of
its 221 isomorphism types: the 95 groups $K\times C_2$, where $K$ runs
through the class-two part of the order-512 census, and the 126 special
groups constructed from the classified quadratic maps.  The latter
contribute 28, 80 and 18 types for $(m,r)=(6,4),(7,3),(8,2)$, respectively.
The executable exporter \path{export_sr1024_class2.g}, summarized in
Appendix~\ref{app:export-class2}, reconstructs all these groups and computes
their metadata directly in GAP.

The first eleven CSV columns have the same names and order as in
\path{sr1024_class3plus.csv}.  The parent columns record the lower
exponent-$2$ central-series parent $G/G'\cong C_2^m$ and the step $r$;
they are distinct from the order-512 direct-product source, whose
identifier is recorded separately.  Further columns give $|G'|$, the
number of involutions, $(m,r)$, the special-group flag, and the
construction source.  Thus the two files together describe all 802
nonabelian representatives, and adjoining $C_2^{10}$ gives the complete
order-1024 catalogue.

In particular, a CSV row with presentation code $c$ reconstructs its
representative by \texttt{PcGroupCode($c$,1024)}.  The converse operation
\texttt{CodePcGroup} records a pc presentation, so the code must be
retained as an exact integer.  These codes make the representatives
reproducible; their pairwise nonisomorphism follows from the orbit
classification and direct-product cancellation, rather than from
inequality of codes or numerical invariants.  Every exported group
passes both the class-two form criterion and an independent evaluation
of the Wigner identities.

The full class-two distribution, including non-special groups, is
\begin{center}
\begin{tabular}{@{}crrr@{}}\toprule
$(m,r)$ & Special & Non-special & Total\\\midrule
$(6,4)$&28&3&31\\
$(7,3)$&80&51&131\\
$(8,2)$&18&33&51\\
$(9,1)$&0&8&8\\\midrule
Total&126&95&221\\\bottomrule
\end{tabular}
\end{center}
The parity exclusion for $(9,1)$ applies to special groups only:
non-special groups have a nonzero common radical and do occur in that row.

\section{Arithmetic consequences of the classification}\label{sec:count-patterns}
Let $f(n)$ be the number of isomorphism classes of finite SR-groups of
order $n$, with $f(1)=1$. By Proposition~\ref{prop:elementary}, $f(n)=0$ for odd $n>1$.
The complete count table is in Appendix~\ref{sec:counttable}. We derive exact formulas on infinite sets of orders, explicit counts
for structural subclasses, and identities that separate new directly
indecomposable types from inherited direct products. The three
exceptions at orders congruent to $4$ modulo $8$ in the census
are explained by infinite families.

For a positive integer $m$, let $a(m)$ be the number of abelian
groups of order $m$. The classification of finite abelian groups gives
\begin{equation}\label{eq:abelian-partitions}
 a(m)=\prod_{p^e\parallel m}\mathsf p(e),\qquad a(1)=1,
\end{equation}
where $\mathsf p(e)$ is the number of partitions of $e$; set
$\mathsf p(x)=0$ for nonintegral $x$.
For an abelian group $A$, write
$\operatorname{Dih}(A)=A\rtimes\langle t\rangle$, where $t^2=1$ and
$t$ acts by inversion; in particular $\operatorname{Dih}(1)=C_2$.

The generalized dihedral construction provides the basic factors in the
formulas below. We use the following known result.
\begin{prop}[{\cite[Theorems~4.4 and~4.7]{GPPRV25}}]
\label{prop:generalized-dihedral-sr}
For every finite abelian group $A$, the generalized dihedral group
$\operatorname{Dih}(A)$ is simply reducible.
\end{prop}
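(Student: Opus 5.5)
Write $G=\operatorname{Dih}(A)=A\rtimes\langle t\rangle$. We verify the two conditions of Definition~\ref{def:SR} directly from the irreducible characters, which are easy to list. For ambivalence, an element $a\in A$ satisfies $tat^{-1}=a^{-1}$, so it is conjugate to its inverse. An element $at$ outside $A$ is an involution, since $(at)^2=a\,tat^{-1}=aa^{-1}=1$. Hence every element is conjugate to its inverse, and condition (1) holds.

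For condition (2) we use Clifford theory with respect to the index-two abelian subgroup $A$. The group $\langle t\rangle$ acts on $\widehat A$ by $\lambda\mapsto\bar\lambda$.
\begin{enumerate}
\item If $\lambda=\bar\lambda$, i.e.\ $\lambda^2=1$, then $\lambda$ extends to two linear characters $\lambda^{\pm}$ of $G$, given by $\lambda^{\pm}(at)=\pm\lambda(a)$.
\item If $\lambda\ne\bar\lambda$, then $\chi_\lambda=\operatorname{Ind}_A^G\lambda$ is irreducible of degree two, with $\chi_\lambda|_A=\lambda+\bar\lambda$, and $\chi_\lambda$ vanishes off $A$.
\end{enumerate}
These exhaust $\Irr(G)$, because the squared degrees already sum to $|G|$.

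Next we compute the products case by case, using restriction to $A$ together with the values on the coset $At$.
\begin{enumerate}
\item A product of two linear characters is linear, so it is trivially multiplicity-free.
\item A linear character times $\chi_\mu$ is the single irreducible $\chi_{\lambda\mu}$.
\item For two degree-two characters, $\chi_\lambda\chi_\mu$ vanishes off $A$ and restricts to $\lambda\mu+\overline{\lambda\mu}+\lambda\bar\mu+\bar\lambda\mu$ on $A$. This decomposes as $\operatorname{Ind}(\lambda\mu)+\operatorname{Ind}(\lambda\bar\mu)$, where each induced character is either a degree-two irreducible or, when its inducing character is real, the sum $\nu^++\nu^-$.
\end{enumerate}

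The main obstacle is ruling out a repeated constituent in the third case. That would require the two pieces to share a constituent, i.e.\ $\lambda\mu\in\{\lambda\bar\mu,\bar\lambda\mu\}$. This forces $\mu^2=1$ or $\lambda^2=1$, which is excluded by hypothesis. Each piece is itself multiplicity-free: if $\nu$ is real, $\operatorname{Ind}\nu=\nu^++\nu^-$ has two distinct constituents, and otherwise $\operatorname{Ind}\nu$ is irreducible. Hence every multiplicity lies in $\{0,1\}$. The check is a short orbit count: the four characters $\lambda^{\pm1}\mu^{\pm1}$ of $A$ must form two distinct inversion orbits, and the exceptional coincidences are exactly the excluded cases $\lambda^2=1$ or $\mu^2=1$.
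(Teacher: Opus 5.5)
Your proof is correct and complete. Every element of $At$ is an involution, which gives ambivalence. The Clifford-theoretic list of $\Irr(G)$ and the identity $\chi_\lambda\chi_\mu=\operatorname{Ind}(\lambda\mu)+\operatorname{Ind}(\lambda\bar\mu)$ then reduce multiplicity-freeness to the excluded coincidences $\lambda^2=1$ or $\mu^2=1$.

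The paper does not prove this proposition itself; it imports it from \cite[Theorems~4.4 and~4.7]{GPPRV25}. Your route therefore differs in being self-contained. It is, however, exactly the computation the paper carries out for the dicyclic group in the proof of Theorem~\ref{thm:eight-prime}, where $\chi_\lambda\chi_\mu=I_{\lambda\mu}+I_{\lambda\mu^{-1}}$ is analysed in the same way.

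What your version buys is independence from the external reference and a transparent reason for the result. Your product computation uses only that $A$ is abelian of index two and inverted by the outer coset, not the splitting. The splitting enters only through the explicit formula $\lambda^\pm(at)=\pm\lambda(a)$ and through ambivalence. This is why the same argument also handles the nonsplit dicyclic case. The citation keeps the paper short, at the cost of leaving this mechanism implicit.

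Two facts are used silently and deserve a sentence each. First, the listed characters are pairwise distinct (clear from their restrictions to $A$), which the degree count needs. Second, each listed irreducible restricts to $A$ as a single inversion orbit; this, with Frobenius reciprocity, turns ``the two pieces share a constituent'' into your condition $\lambda\mu\in\{\lambda\bar\mu,\bar\lambda\mu\}$.
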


\subsection{Orders twice an odd integer}
\begin{thm}[Orders twice an odd integer]\label{thm:twice-odd}
For every odd $m\ge1$, the SR-groups of order $2m$ are precisely
$\operatorname{Dih}(A)$ with $A$ abelian of order $m$. Consequently
\begin{equation}\label{eq:twice-odd-count}
 \boxed{f(2m)=a(m)=\prod_{p^e\parallel m}\mathsf p(e).}
\end{equation}
\end{thm}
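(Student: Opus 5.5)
The argument splits into two directions: every $\operatorname{Dih}(A)$ is SR, and every SR-group of order $2m$ has this form. Once both are in hand, counting reduces to counting abelian groups of order $m$.

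\emph{Sufficiency.} For $A$ abelian of order $m$, Proposition~\ref{prop:generalized-dihedral-sr} shows directly that $\operatorname{Dih}(A)$ is SR. The groups $\operatorname{Dih}(A)$ and $\operatorname{Dih}(A')$ are isomorphic only when $A\cong A'$. Indeed, since $m$ is odd, $A$ is the unique Sylow-type normal subgroup of index two: it is the set of elements of odd order, because every element outside $A$ is an involution of the form $at$. Hence the map $A\mapsto\operatorname{Dih}(A)$ is injective on isomorphism classes, and it contributes exactly $a(m)$ classes. The product formula is then \eqref{eq:abelian-partitions}.

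\emph{Necessity.} Let $G$ be SR of order $2m$ with $m$ odd. A Sylow $2$-subgroup has order two, so the standard regular-representation argument (Cayley's normal $2$-complement) gives a normal subgroup $N$ of order $m$ with $G=N\rtimes\langle t\rangle$ and $t^2=1$. The key step is to show that $N$ is abelian and that $t$ inverts it. Take $g\in N$. By ambivalence there is $x\in G$ with $xgx^{-1}=g^{-1}$. If $x\in N$, then, as in the introduction's argument, conjugation by $x$ induces an automorphism of odd order on $\langle g\rangle$ sending $g$ to $g^{-1}$, which forces $g=g^{-1}$ and hence $g=1$. So for $g\ne1$ the conjugating element lies in the coset $Nt$.

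To get a global statement I would count. Every $g\in N$ is conjugate to $g^{-1}$, and $N$ has odd order. Consider $N$-classes inside $N$: the element $t$ acts on the set of $N$-conjugacy classes of $N$. A nontrivial $N$-class $C$ cannot equal $C^{-1}$, because that would require an element of $N$ inverting $g\ne 1$. Ambivalence in $G$ says that each $G$-class, which is the union of $C$ and $tCt^{-1}$, is closed under inversion. Therefore $tCt^{-1}=C^{-1}$ for each $N$-class $C$. This holds for every $N$-class but is not yet pointwise inversion.

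Next I would use the character side. Every element of $G\setminus N$ has the form $nt$ and is an involution, because $(nt)^2\in N$ has odd order and is conjugate to its own inverse within $\langle nt\rangle$. Thus $G$ has exactly $m$ involutions, all lying in $Nt$. Now $r_2(1)=1+m$, and elements of $N$ have unique square roots inside $N$. Plugging this into the Frobenius--Schur count of Corollary~\ref{cor:ambivalence-moment}, or better into Wigner's equality (Theorem~\ref{thm:wigner}), should force $N$ abelian. Concretely, the left side is
\[
\sum r_2^3=(m+1)^3+(m-1)\cdot 1,
\]
since for $g\in N$ the only square roots lie in $N$ (elements of $Nt$ square to elements $ntnt\in N$, and I would check that these lie only at $1$). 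The right side is $\sum c(g)^2$, which I would bound below using $k(N)$. Equality should force $k(N)=m$, so $N$ is abelian.

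Once $N$ is abelian, the fact that $tgt^{-1}$ is conjugate to $g^{-1}$ within $N$ means $tgt=g^{-1}$ exactly. Hence $G\cong\operatorname{Dih}(N)$.

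The main obstacle is this last step, proving $N$ abelian. If the Wigner computation is messy, I would fall back on a simpler route: $g\mapsto tgt$ is an automorphism $\alpha$ of $N$ with $(nt)^2=n\,\alpha(n)=1$ for all $n$, which is exactly the fact that every element of $Nt$ is an involution. Then $\alpha(n)=n^{-1}$, and an automorphism that inverts every element forces $N$ to be abelian. I expect this involution count to be the cleanest path. It requires verifying that every $nt$ is an involution, which follows from ambivalence: $(nt)^2=s\in N$ is conjugate to $s^{-1}$, and that conjugation can be taken inside $\langle nt\rangle$, whose odd part $\langle s\rangle$ is then centralized, forcing $s=1$. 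This is the delicate point to check carefully.
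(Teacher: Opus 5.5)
Your overall route works, but the step you flag as delicate is justified incorrectly as written. You say the conjugation inverting $s=(nt)^2$ ``can be taken inside $\langle nt\rangle$''. Since $\langle nt\rangle$ is abelian, an element of it inverts $s$ only if $s=s^{-1}$, which is exactly what you are trying to prove. The argument is therefore circular, and the same circularity appears in your earlier sentence that $(nt)^2$ ``is conjugate to its own inverse within $\langle nt\rangle$''.

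The correct argument uses the observation you already made: for $1\ne g\in N$, every element inverting $g$ lies in $Nt$. Suppose $s\ne1$ and $zsz^{-1}=s^{-1}$. Then $z\in Nt=N\,(nt)$, say $z=y\,(nt)$ with $y\in N$. Since $nt$ commutes with $s$, you get $ysy^{-1}=s^{-1}$ with $y\in N$, a contradiction. In other words, $C_N(g)=1$ for every $g\in G\setminus N$, and $g^2\in C_N(g)$ forces $g^2=1$.

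You should also drop the Wigner/Frobenius--Schur detour. The square-root counts you plug in already presuppose that every $nt$ is an involution. At that point $n\,\alpha(n)=(nt)^2=1$ finishes the proof, and the lower bound that is supposed to force $k(N)=m$ is never established. The paragraph on $tCt^{-1}=C^{-1}$ is likewise not needed.

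With that repair your proof is correct and differs from the paper's in a small but genuine way. Both rest on the same observation: an element of $N$ centralized by something outside $N$ cannot be inverted in $G$ unless it is trivial.
\begin{itemize}
\item The paper applies this only to the fixed involution $t$, obtaining $C_M(t)=1$. It then needs the classical lemma on fixed-point-free involutory automorphisms: $x\mapsto x^{-1}\alpha(x)$ is injective, hence surjective, and $\alpha$ inverts its image.
\item You apply the observation to every element $nt$ of the nontrivial coset and its square. This gives $\alpha(n)=n^{-1}$ pointwise with no bijectivity argument, and shows along the way that all $m$ elements outside $N$ are involutions.
\end{itemize}
Your sufficiency direction, the recovery of $A$ as the set of odd-order elements, and the final count agree with the paper.
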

\begin{proof}
First, any group $G$ of order $2m$, without any SR assumption, has a
normal subgroup $M$ of order $m$. Indeed, an involution acts in the
regular permutation representation as a product of $m$ transpositions.
The sign of this permutation is $-1$, so the sign homomorphism has
kernel of index two. Every element of odd order lies in this kernel;
conversely every element of the kernel has odd order. Thus $M$ is the
set of odd-order elements and is characteristic in $G$.
Choose an involution $t$; then $G=M\rtimes\langle t\rangle$.

Suppose now that $G$ is SR. If $x\in C_M(t)$ is nonidentity, an element
conjugating $x$ to $x^{-1}$ cannot belong to $M$, since a nonidentity
element of an odd-order group is not conjugate to its inverse in that
group. It therefore has the form $yt$, with $y\in M$. But $t$ centralizes
$x$, so conjugation by $yt$ has the same effect on $x$ as conjugation
by $y$, giving the same contradiction. Hence $C_M(t)=1$.
Let $\alpha$ be the automorphism induced by $t$. The map
\[
 M\longrightarrow M,\qquad x\longmapsto x^{-1}\alpha(x)
\]
is injective: equality of the images of $x$ and $y$ says that
$yx^{-1}$ is fixed by $\alpha$. It is therefore surjective.
The involution $\alpha$ inverts every element in its image, so it
inverts every element of $M$. Since inversion is now an automorphism,
$M$ is abelian.

Conversely, Proposition~\ref{prop:generalized-dihedral-sr} shows that
$\operatorname{Dih}(A)$ is SR for every abelian group $A$ of odd order.
Finally, its subgroup $A$ is the
characteristic set of odd-order elements, so distinct isomorphism types
of $A$ give distinct isomorphism types of $\operatorname{Dih}(A)$.
Equation~\eqref{eq:abelian-partitions} completes the count.
\end{proof}

For example, the theorem gives $f(18)=2$, $f(54)=3$, and $f(90)=2$.
The formula also checks all 500 entries with $n\equiv2\pmod4$
in Table~\ref{tab:all-counts}; the arithmetic verification is included
in Appendix~\ref{app:implementation}.

\subsection{Orders four times an odd integer}
\begin{thm}[Orders four times an odd integer]\label{thm:four-odd}
Let $G$ be an SR-group of order $4m$, where $m$ is odd. Then $G$ has a
unique normal Hall subgroup $M$ of odd order, and $G/M\cong C_2^2$.
Moreover, $M$ is abelian if and only if
\[
 G\cong\operatorname{Dih}(A)\times\operatorname{Dih}(B)
 \qquad (A,B\text{ abelian of odd order},\ |A||B|=m).
\]
The unordered pair of isomorphism types $\{A,B\}$ is uniquely determined
by $G$. Define
\begin{equation}\label{eq:four-odd-baseline}
 h(m)=\frac12\left(\sum_{d\mid m}a(d)a(m/d)+b(m)\right),\qquad
 b(m)=\begin{cases}
 a(\sqrt m),&m\text{ is a square},\\
 0,&m\text{ is not a square}.
 \end{cases}
\end{equation}
If $e(m)$ counts the SR-groups of order $4m$ whose normal Hall subgroup
of odd order is nonabelian, then
\begin{equation}\label{eq:four-odd-structural}
 f(4m)=h(m)+e(m),\qquad e(m)\ge0.
\end{equation}
\end{thm}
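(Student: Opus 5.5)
The plan is to first pin down the $2$-part of $G$ by reducing modulo the largest normal odd-order subgroup, then analyse the abelian case through the action of a Klein four-group on $M$, and finally count. Let $M=O_{2'}(G)$, the largest normal subgroup of odd order. It is characteristic, and $\bar G=G/M$ is SR by Proposition~\ref{prop:closure} and solvable by Theorem~\ref{thm:sr-solvable}. Since $O_{2'}(\bar G)=1$, the Fitting subgroup of $\bar G$ is $F=O_2(\bar G)$, and $C_{\bar G}(F)\le F$. The Sylow $2$-subgroups have order $4$, so $|F|\in\{2,4\}$ and $\bar G/F$ embeds in $\operatorname{Out}(F)$.

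The case analysis on $F$ runs as follows. If $|F|=2$, then $\bar G=F$, which contradicts $4\mid|\bar G|$. If $F$ is cyclic of order $4$, then $\bar G=F$, but $C_4$ is not ambivalent (Proposition~\ref{prop:elementary}). If $F\cong C_2^2$, then $\bar G/F\le S_3$ with odd order dividing $3$, so either $\bar G\cong A_4$, which is not ambivalent because a $3$-cycle is not conjugate to its inverse, or $\bar G=F\cong C_2^2$. Hence $M$ is a normal Hall subgroup with $G/M\cong C_2^2$. It is unique because any normal Hall $2'$-subgroup is contained in $O_{2'}(G)$ and has the same order. By Schur--Zassenhaus, $G=M\rtimes V$ with $V\cong C_2^2$.

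Now assume $M$ is abelian and write $V=\{1,v_1,v_2,v_3\}$. Coprime action of $V$ on the abelian group $M$ splits $M$ into eigen-components indexed by the four sign characters of $V$. The component on which $V$ acts trivially consists of central elements, which are involutions by Proposition~\ref{prop:elementary}, so it is trivial. This gives
\[
 M=A_1\times A_2\times A_3,\qquad A_i=C_M(v_i),
\]
where $v_i$ centralizes $A_i$ and inverts the other two components. Conjugation by $M$ fixes each element of $M$, so the $G$-conjugates of $x_1x_2x_3$ (with $x_i\in A_i$) are its images under $V$. These are obtained by flipping the signs of exactly two components, so none of them equals $(x_1x_2x_3)^{-1}$ unless some $x_i=1$. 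Ambivalence therefore forces one component, say $A_3$, to be trivial. Then $v_1$ centralizes $A_1$ and inverts $A_2$, while $v_2$ inverts $A_1$ and centralizes $A_2$. Consequently
\[
 G=(A_1\rtimes\langle v_2\rangle)\times(A_2\rtimes\langle v_1\rangle)\cong\operatorname{Dih}(A_1)\times\operatorname{Dih}(A_2).
\]
Conversely, a group $\operatorname{Dih}(A)\times\operatorname{Dih}(B)$ with $A,B$ of odd order is SR by Proposition~\ref{prop:generalized-dihedral-sr} and Proposition~\ref{prop:closure}, and its Hall subgroup $A\times B$ is abelian.

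For uniqueness, the multiset $\{C_M(x):x\in G/M\text{ an involution}\}$ is intrinsic to $G$ because $M$ is characteristic. Here $C_M(x)$ is the centralizer in $M$ of the conjugation action of $x$, which is well defined since $M$ is abelian. This multiset equals $\{A,B,1\}$. Deleting one copy of the trivial group recovers $\{A,B\}$; when one of $A,B$ is trivial, the multiset is $\{A,1,1\}$ and the pair $\{A,1\}$ is still recovered.

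The count $h(m)$ is the number of unordered pairs of isomorphism types $\{A,B\}$ of abelian groups with $|A||B|=m$. The sum $\sum_{d\mid m}a(d)a(m/d)$ counts ordered pairs. The diagonal pairs $A\cong B$ exist only when $m$ is a square, and there are $a(\sqrt m)$ of them. Adding $b(m)$ and halving therefore gives the number of unordered pairs. All remaining SR-groups of order $4m$ have nonabelian $M$, so they are counted by $e(m)\ge0$, which proves \eqref{eq:four-odd-structural}. The main obstacle is the first step: excluding $A_4$-type quotients and making the self-centralizing Fitting argument clean. After that, the eigenspace argument is routine.
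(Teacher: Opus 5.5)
Your proof is correct. Its middle part is the same as the paper's: the sign decomposition of the abelian $M$ under $C_2^2$, the exclusion of the trivial sign and of three nonzero signs, and the orbit count giving $h(m)$. The two ends differ.

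\textbf{Constructing $M$.} The paper uses solvability only to obtain a normal subgroup $H$ of index two. It then applies the regular-representation sign argument of Theorem~\ref{thm:twice-odd} to $|H|=2m$, so $M$ is characteristic in $H$ and hence normal in $G$. This reuses an earlier lemma and needs no case analysis. You instead take $M=O_{2'}(G)$ and use the self-centralizing Fitting subgroup of $G/M$, which is the device the paper itself uses later for Corollary~\ref{thm:dihedral-subclass}. Your case split on $F$ can be collapsed. The Sylow $2$-subgroup of $\bar G$ has order $4$, so it is abelian and centralizes $F$; hence it equals $F$. Then $\bar G/F$ is an odd-order SR quotient and therefore trivial.

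\textbf{Uniqueness of $\{A,B\}$.} The paper shows that each $\operatorname{Dih}(A)$ is directly indecomposable, since a nontrivial direct factor would be SR of even order while $|\operatorname{Dih}(A)|$ has $2$-part $2$. It then invokes Krull--Remak--Schmidt and recovers $A$ from $\operatorname{Dih}(A)$ as in Theorem~\ref{thm:twice-odd}. You instead read $\{A,B,1\}$ directly off the intrinsic multiset of fixed-point subgroups $C_M(x)$, with $x$ running over the three involutions of $G/M$. Your route is more elementary and self-contained. The paper's route fits the direct-factor framework it reuses for the dihedral-product subclass and the Euler product.
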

\begin{proof}
By Theorem~\ref{thm:sr-solvable}, the nontrivial group $G$ is solvable
and has nontrivial abelianization. Proposition~\ref{prop:elementary}
makes this abelianization elementary abelian, so $G$ has
a normal subgroup $H$ of index two. The first paragraph of the proof
of Theorem~\ref{thm:twice-odd}, which did not require $H$ to be SR,
applied to $|H|=2m$ gives a characteristic subgroup $M$ of $H$ of
order $m$. Thus $M$ is normal in $G$. The quotient $G/M$ is SR and
has order four; it is therefore $C_2^2$. A Sylow $2$-subgroup $P$ maps
isomorphically onto $G/M$, giving $G=M\rtimes P$.
Every odd-order element of $G$ belongs to $M$, which proves uniqueness.

Suppose that $M$ is abelian, and write it additively. Since $|M|$ is odd,
the commuting involutions in $P$ give a simultaneous sign decomposition
\[
 M=\bigoplus_{\lambda\in\operatorname{Hom}(P,\{1,-1\})}M_\lambda,
 \qquad M_\lambda=\{x:\ u(x)=\lambda(u)x\text{ for all }u\in P\}.
\]
For instance, this follows by applying the commuting projections
$(1+u)/2$ and $(1-u)/2$; division by two is defined on $M$.
The trivial sign subgroup lies in $Z(G)$ and must be zero.
Let $\lambda_1,\lambda_2,\lambda_3$ be the three nontrivial signs of $P$.
If their three subgroups were nonzero, choose
$0\ne x_i\in M_{\lambda_i}$. Conjugation on $M$ is the action of $P$,
so ambivalence of $x_1+x_2+x_3$ would require some $u\in P$ with
$\lambda_i(u)=-1$ for all three $i$. This contradicts
$\lambda_1\lambda_2\lambda_3=1$. At most two nontrivial sign subgroups
therefore occur. Any two distinct nontrivial signs form a basis of the
dual of $P$, so choosing the dual basis of $P$ gives precisely
$\operatorname{Dih}(A)\times\operatorname{Dih}(B)$, allowing a trivial
$A$ or $B$. The converse follows from Theorem~\ref{thm:twice-odd}
and Proposition~\ref{prop:closure}.

Each $\operatorname{Dih}(A)$ for $A$ of odd order is directly
indecomposable: in any decomposition into two nontrivial direct
factors, both factors would be SR, hence of even order, whereas
$|\operatorname{Dih}(A)|$ is divisible by two only once.
The Krull--Remak--Schmidt theorem therefore gives the asserted
uniqueness of the unordered pair.
There are $\sum_{d\mid m}a(d)a(m/d)$ ordered pairs of abelian-group
isomorphism types with product of orders $m$. Interchanging the two
entries fixes exactly $b(m)$ pairs. The orbit-counting formula for
this interchange gives~\eqref{eq:four-odd-baseline}; splitting according
to whether $M$ is abelian proves~\eqref{eq:four-odd-structural}.
\end{proof}

\begin{corl}[The squarefree odd part]\label{prop:four-squarefree}
Let $m$ be a squarefree odd positive integer. Every SR-group of
order $4m$ has cyclic odd Hall subgroup, and
\[
 f(4)=1,\qquad f(4m)=2^{\omega(m)-1}\quad(m>1),
\]
where $\omega(m)$ is the number of distinct prime divisors of $m$.
Equivalently, $e(m)=0$ for every squarefree odd $m$.
\end{corl}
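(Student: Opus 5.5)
We prove the corollary by showing $e(m)=0$ when $m$ is squarefree; the count then follows from $h(m)$ in Theorem~\ref{thm:four-odd}.

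\textbf{Step 1: the odd Hall subgroup is cyclic.} Let $G$ be SR of order $4m$ and let $M$ be its normal Hall subgroup of odd order, as in Theorem~\ref{thm:four-odd}. Since $m$ is squarefree, every Sylow subgroup of $M$ is cyclic of prime order. A group with all Sylow subgroups cyclic is metacyclic (Hölder--Burnside--Zassenhaus): $M=C_a\rtimes C_b$ with $\gcd(a,b)=1$ and $ab=m$. We must rule out a nontrivial action, i.e.\ show $M$ is abelian. Since $m$ is squarefree, this is the same as showing $M$ is cyclic.

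\textbf{Step 2: ambivalence forces $M$ abelian.} Suppose $M$ is nonabelian. Take a prime $p$ dividing $a$ with $C_p\le C_a=M'$ acted on nontrivially by some element $y$ of odd prime order $\ell$ in $C_b$. Then $\ell\mid p-1$, and $y$ acts on $C_p$ by an automorphism of odd order $\ell>1$.

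Now take the image $\bar y$ of $y$ in $\bar G=G/C_a$. This quotient is SR by Proposition~\ref{prop:closure} and has order $4b$. In $\bar G$, $\bar y$ is conjugate to $\bar y^{-1}$.

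Consider the normalizer action on $\langle y\rangle C_p$, or more cleanly, look at $N=C_G(C_p)$ and $G/N\hookrightarrow\operatorname{Aut}(C_p)\cong C_{p-1}$, which is cyclic. This quotient is SR, so by Proposition~\ref{prop:elementary} it is an elementary abelian $2$-group, since an abelian SR group is elementary abelian. Hence every odd-order element of $G$ centralizes $C_p$. This contradicts the choice of $y$.

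Applying this to every prime divisor of $a$ shows $M'=1$, so $M$ is abelian. Being of squarefree order, $M$ is cyclic. (Only the last argument is really needed; Step 1 just organises it. A quicker version skips the metacyclic structure: for each Sylow $C_p\trianglelefteq$? No—normality is not given, so we keep the metacyclic step to obtain a normal $C_p\le M'$.) This step is the main obstacle: one has to locate a normal cyclic $p$-subgroup of $G$ on which $M$ acts nontrivially, and the metacyclic structure supplies it, because $M'=C_a$ is characteristic in $M$ and hence normal in $G$.

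\textbf{Step 3: counting.} With $M$ abelian we have $e(m)=0$. Since $m$ is squarefree, $a(d)=1$ for every $d\mid m$. Hence $\sum_{d\mid m}a(d)a(m/d)=2^{\omega(m)}$, while $b(m)=0$ for $m>1$ and $b(1)=1$. Substituting into \eqref{eq:four-odd-baseline} gives
\[
f(4m)=h(m)=2^{\omega(m)-1}\quad(m>1),\qquad f(4)=\tfrac12(1+1)=1.
\]
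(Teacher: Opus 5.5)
Your proof is correct, but it reaches the key point by a different route from the paper.

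\textbf{The paper's route.} The paper argues by induction on $m$. It takes a minimal normal subgroup $N\le M$ of $G$, which has prime order by solvability and squarefreeness. It observes that $G'$ centralizes $N$ because $\operatorname{Aut}(N)$ is abelian, and that $M\le G'$ because $G/G'$ is an elementary abelian $2$-group, so $N\le Z(M)$. Induction applied to the SR quotient $G/N$ makes $M/N$ cyclic, and a central-by-cyclic group is abelian.

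\textbf{Your route.} You avoid induction. You invoke the H\"older--Burnside--Zassenhaus description of groups with cyclic Sylow subgroups to obtain the cyclic characteristic subgroup $M'=C_a$. Its prime-order subgroups are then normal in $G$, and you derive a contradiction directly.

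\textbf{What the two share.} The mechanism is the same in both. $G/C_G(C_p)$ embeds in the abelian group $\operatorname{Aut}(C_p)$, so as an SR quotient it is an elementary abelian $2$-group, and every odd-order element centralizes $C_p$. Your quotient formulation is equivalent to the paper's chain $M\le G'\le C_G(N)$.

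\textbf{Trade-offs.} The paper's argument needs only the solvability already used in Theorem~\ref{thm:four-odd} plus elementary facts. Yours replaces the induction by a heavier classical structure theorem. You could shorten it further: apply the same centralizer argument directly to the cyclic normal subgroup $M'$. That gives $M'\le Z(M)$, so $M$ is nilpotent with cyclic Sylow subgroups and hence cyclic.

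\textbf{Write-up fixes.} State explicitly that $C_p$ is normal in $G$ because it is characteristic in the cyclic group $M'$, which is characteristic in $M\trianglelefteq G$. Delete the unused remark about $\bar G=G/C_a$ and the self-correcting parenthetical aside in Step~2. Your count in Step~3 is correct.
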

\begin{proof}
Let $M$ be the normal odd Hall subgroup of $G$. We prove that
$M$ is cyclic by induction on $m$. There is nothing to prove when
$m=1$. Otherwise choose a minimal nontrivial $G$-normal subgroup
$N\le M$. Theorem~\ref{thm:sr-solvable} makes $N$ elementary abelian, and
squarefreeness gives $|N|=p$ for some odd prime $p$.
Since $\operatorname{Aut}(N)$ is abelian, $G'$ centralizes $N$.
Also $M\le G'$ because $G/G'$ is elementary abelian of order a
power of two. Hence $N\le Z(M)$. The quotient $G/N$ is SR of
order $4m/p$, so induction shows that its odd Hall subgroup
$M/N$ is cyclic. A group whose quotient by a central subgroup
is cyclic is abelian. Thus $M$ is abelian, and its squarefree
order makes it cyclic.

The established abelian-Hall classification gives
$G\simeq\operatorname{Dih}(A)\times\operatorname{Dih}(B)$,
where $|A||B|=m$. Each prime divisor of $m$ is allocated to
exactly one of the two cyclic groups $A,B$. For $m>1$ there are
$2^{\omega(m)}$ ordered allocations and swapping the factors
acts without fixed points. This yields the stated count.
\end{proof}

For nonsquarefree odd parts, the following structure theorem supplies a
second way to eliminate the correction $e(m)$.
\begin{thm}[Chankov {\cite[Chapter~XXVI, Theorem~4.4]{BKZ19}}]
\label{thm:chankov-supersolvable}
Let $G$ be a nontrivial supersolvable SR-group and let $S$ be a Sylow
$2$-subgroup of $G$. Then $\Phi(S)\lhd G$, and there are abelian
groups $A_1,\ldots,A_t$ of odd order such that
\[
 G/\Phi(S)\cong
 \operatorname{Dih}(A_1)\times\cdots\times\operatorname{Dih}(A_t).
\]
The factors $A_i$ are allowed to be trivial.
\end{thm}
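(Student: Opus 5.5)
The plan is to separate the odd part from the $2$-part, show that $\Phi(S)$ centralizes the odd part, and then analyse the quotient by $\Phi(S)$ as an elementary abelian $2$-group acting on an odd-order group, much as in the proofs of Theorems~\ref{thm:twice-odd} and~\ref{thm:four-odd}.

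\emph{Step 1 (odd Hall subgroup).} A supersolvable group has a Sylow tower ordered by decreasing primes. Since $2$ is the smallest prime, $G=M\rtimes S$ with $M$ the normal Hall $2'$-subgroup. Because $G/G'$ is an elementary abelian $2$-group (Proposition~\ref{prop:elementary}), we have $M\le G'$, and $G'$ is nilpotent since $G$ is supersolvable.

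\emph{Step 2 ($\Phi(S)$ centralizes $M$).} Refine a $G$-chief series through $M$; its factors inside $M$ are cyclic of odd prime order $p$. On such a factor $K/L$, $S$ acts through a $2$-subgroup of the cyclic group $\operatorname{Aut}(C_p)$, hence through a cyclic image. I would show this image has order at most $2$. A generator $x$ of $K/L$ is conjugate in $G/L$ to $x^{-1}$, so the image contains inversion. Ambivalence of elements of $K/L$ times suitable $2$-elements should rule out an element of order $4$ in the image; I would test this via the quotient $G/C_G(K/L)$, which is SR by Proposition~\ref{prop:closure} and has the form $C_p\rtimes C_{2^a}$. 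Granting order at most $2$, every square, and every commutator of $S$, acts trivially on each factor, so $\Phi(S)$ stabilizes a normal series of $M$. Coprime action then gives $[M,\Phi(S)]=1$. Since $\Phi(S)\lhd S$ and $G=MS$, it follows that $\Phi(S)\lhd G$.

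\emph{Step 3 (structure of $\bar G=G/\Phi(S)$).} $\bar G=M\rtimes E$ with $E=S/\Phi(S)$ elementary abelian, and $\bar G$ is SR by quotient closure. The argument of Theorem~\ref{thm:twice-odd} shows that $C_M(E)$ contains no nonidentity element conjugate to its inverse, so $C_M(E)=1$. I would then prove that $M$ is abelian. Each $e\in E$ acts on every chief factor of $M$ by $\pm1$, giving sign characters. For an abelian quotient such as $M/\Phi(M)$, the projections $(1\pm e)/2$ give a decomposition $\bigoplus_\lambda M_\lambda$. The claim to prove is that each $M_\lambda$ is inverted by some element and that distinct pieces commute, so $M'=1$. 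I expect this step, excluding nonabelian $M$ (note that $M\le G'$ nilpotent only gives nilpotence), to be the main obstacle. I would attack it by induction on $|M|$ using a minimal normal subgroup $N\le Z(M)$, as in Corollary~\ref{prop:four-squarefree}, combined with the sign-character bookkeeping.

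\emph{Step 4 (splitting).} Once $M$ is abelian, write $M=\bigoplus_\lambda M_\lambda$ over the characters $\lambda$ of $E$, with $M_1=0$. Generalizing the three-sign argument of Theorem~\ref{thm:four-odd}, ambivalence of $\sum x_\lambda$ over any set of nonzero pieces requires a $u\in E$ with $\lambda(u)=-1$ for all $\lambda$ in that set. Hence no subset of the occurring characters multiplies to $1$, so the occurring characters are linearly independent in $\widehat E$. Extend them to a basis of $\widehat E$ and take the dual basis of $E$. This gives $\bar G\cong\prod_i\operatorname{Dih}(A_i)$, where $A_i=M_{\lambda_i}$ and the added basis characters contribute factors with $A_i=1$.
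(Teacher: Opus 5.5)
There is a genuine gap, in Steps 3 and 4. Steps 1 and 2 are essentially right, with one slip in Step 2. $G/C_G(K/L)$ is not of the form $C_p\rtimes C_{2^a}$, since $K\le C_G(K/L)$. It is a cyclic quotient of $G$, hence a quotient of the elementary abelian group $G/G'$, so it has order at most $2$, which is exactly what you need.

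\textbf{Step 3.} Apart from quotient closure, which merely ambivalent groups also enjoy, your argument never uses the multiplicity condition (ii) of Definition~\ref{def:SR}. Without it, the claim that $M$ is abelian is false for ambivalent supersolvable groups. Let $P=3^{1+2}_+$, and let $E=\langle e_1,e_2\rangle\cong C_2^2$ act on $P/Z(P)\cong\mathbb F_3^2$ by $\operatorname{diag}(-1,1)$ and $\operatorname{diag}(1,-1)$, and on $Z(P)$ by inversion. Then $G=P\rtimes E$, of order $108$, has the following properties:
\begin{itemize}
\item it is supersolvable, with $\Phi(S)=1$ and $C_P(E)=1$;
\item it is ambivalent: $e_1e_2$ inverts $P/Z(P)$ and conjugation in $P$ then fixes the central coordinate; $e_1$ inverts $Z(P)$; an element of a coset $Pe$ is $P$-conjugate to some $ec$ with $c\in C_P(e)$, which is inverted by $e_1$ or $e_2$;
\item every quotient by a nontrivial normal subgroup inside $P$ is a quotient of $G/Z(P)\cong S_3\times S_3$;
\item yet $P$ is nonabelian.
\end{itemize}
So no induction over a central minimal normal subgroup, combined with sign bookkeeping, can prove $M'=1$.

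What excludes this group is a tensor product. A degree-$6$ character $\chi$ lies over both $\theta$ and $\bar\theta$, so $(\chi^2)_P\supseteq 2\theta\bar\theta$. Hence $\operatorname{Ind}_P^G\mu$, for a linear $\mu$ of $P/Z(P)$ in a regular $E$-orbit, occurs twice in $\chi^2$. In general you would take a minimal counterexample and reduce to $M=P$ a $p$-group with $|P'|=p$, $Z(P)$ cyclic, and $E$ faithful on $P/Z(P)$. The reductions are quotients by the other Sylow subgroups, by $G$-normal subgroups of $P$, and by the central subgroup $C_E(P)$. Then run exactly this computation, using that every nonlinear $\theta$ is fully ramified over $Z(P)$.

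\textbf{Step 4.} This step contains a logical error, and the conclusion again needs (ii). Requiring a $u$ with $\lambda(u)=-1$ on every subset $T$ rules out only relations $\prod_{\lambda\in T}\lambda=1$ with $|T|$ odd. It is equivalent to the existence of a single element of $E$ inverting all of $M$. Even relations survive.

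For example, take the index-two subgroup of $S_3^4$ on which the four sign components multiply to $1$. It is ambivalent and supersolvable, with abelian $M=C_3^4$ and sign characters $\lambda_1,\lambda_2,\lambda_3,\lambda_1\lambda_2\lambda_3$. It is not a product of generalized dihedral groups, since such a product with Sylow subgroup $C_2^3$ has at most three nonzero sign eigenspaces. It is not SR. Let $\rho$ be the degree-two character of $S_3$. The restrictions of $\rho\otimes\rho\otimes\rho\otimes1$ and $\rho\otimes1\otimes1\otimes\rho$ are irreducible. Their product contains the irreducible restriction of $1\otimes\rho\otimes\rho\otimes\rho$ twice, because $\operatorname{sgn}\otimes\rho\otimes\rho\otimes\rho$ restricts to the same character.

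To complete Step 4 you need this argument in general:
\begin{itemize}
\item take a minimal relation $T$, which must have even size at least four;
\item pass by quotients (other eigenspaces, index-$p_\lambda$ subgroups of each $M_\lambda$, and the kernel of the $E$-action) to the even-sign subgroup of $\prod_{\lambda\in T}\operatorname{Dih}(C_{p_\lambda})$;
\item apply the analogous product of two characters there.
\end{itemize}
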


\begin{corl}[Nilpotent odd Hall subgroups]\label{thm:nilpotent-odd-hall}
Let $G$ be an SR-group of order $4m$, with $m$ odd. If its normal Hall
subgroup $M$ of odd order is nilpotent, then $M$ is abelian.
\end{corl}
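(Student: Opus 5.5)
The plan is to show that $G$ is supersolvable and then apply Chankov's Theorem~\ref{thm:chankov-supersolvable}. Let $S$ be a Sylow $2$-subgroup, so $|S|=4$. Theorem~\ref{thm:four-odd} gives $S\cong G/M\cong C_2^2$, so $\Phi(S)=1$. Once supersolvability is established, Theorem~\ref{thm:chankov-supersolvable} yields $G\cong\operatorname{Dih}(A_1)\times\cdots\times\operatorname{Dih}(A_t)$ with each $A_i$ abelian of odd order. Each factor has Sylow $2$-subgroup of order two, so $t=2$. The set of odd-order elements of this direct product is $A_1\times A_2$, which is abelian. Since $M$ consists exactly of the odd-order elements of $G$, the group $M$ is abelian.

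For supersolvability, I will use the standard criterion that a group is supersolvable when every chief factor has prime order. Below $M$, take a chief series of $G$ passing through $M$. The factors above $M$ lie in $G/M\cong C_2^2$, which is abelian, so those chief factors have order $2$. Every chief factor inside $M$ is a chief factor $N/K$ of $G$ with $N\le M$, and it is an elementary abelian $p$-group for some odd prime $p$. It is irreducible as a module for $G/C_G(N/K)$. Because $M$ is nilpotent, the Sylow $p$-subgroup $M_p$ acts on the $p$-group $N/K$ unipotently. The subgroup of fixed points is $G$-invariant, so by irreducibility $M_p$ acts trivially. The Hall $p'$-part of $M$ is normal in $G$ and is a $p'$-group inside $M$, and it commutes elementwise with $M_p$. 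Coprime action of that part plus irreducibility is the point that needs care. The cleanest route is to observe that $O_p(G/C)=1$ for the centralizer $C$ of an irreducible module. The image of $M$ in $G/C$ is a nilpotent normal subgroup whose Sylow $p$ is trivial, so that image is a $p'$-group acting faithfully.

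The main obstacle is to show that $N/K$ is one-dimensional over $\F_p$. I intend to reduce to $G/C$, a quotient of $G/K$ and hence SR by Proposition~\ref{prop:closure}. Its odd Hall subgroup $\bar M$ is nilpotent of order prime to $p$ and acts faithfully and irreducibly together with the Klein four-group. By Clifford theory over the Klein group, the module restricted to $\bar M$ splits into homogeneous components permuted by $C_2^2$. I would first try ambivalence of the semidirect product $(N/K)\rtimes(G/C)$, which is a section of an SR group. Elements of $N/K$ must be inverted by conjugation, and this forces the following: for each $v$ there is $g$ with $gv=-v$, and nilpotent elements of $\bar M$ combined with the Klein involutions must realize $-1$ on every vector. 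Counting, the union over $g$ of the $(-1)$-eigenspaces must cover the module. Odd-order elements of $\bar M$ cannot have eigenvalue $-1$ in odd characteristic unless they act with even order, which they do not, so only the cosets of the three involutions contribute. I expect this to force $\bar M$ to act by scalars of order dividing $2$, hence trivially. The module is then irreducible for $C_2^2$ and therefore one-dimensional.

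Hence every chief factor has prime order, and $G$ is supersolvable. The argument in the first paragraph then finishes the proof.
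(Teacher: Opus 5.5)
There is a genuine gap. Your outer structure matches the paper's: prove $G$ supersolvable, then apply Theorem~\ref{thm:chankov-supersolvable} with $\Phi(S)=1$ and read off $M\cong A_1\times A_2$. Your chief-factor setup, and your argument that $M_p$ acts trivially on $N/K$, are also fine.

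The gap is the step you call the main obstacle. You never show that the image $\bar M$ of $M$ in $G/C$ is trivial, and the route you sketch cannot show it. That route uses only four facts: $G/C$ is SR; $\bar M$ is a nilpotent $p'$-group; it acts faithfully and irreducibly on $N/K$; and every vector of $N/K$ is negated by some element of $G/C$.

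All four facts hold for the affine norm-circle group of order $300$ (Theorem~\ref{thm:arithmetic-norm-circle} with $q=5$):
\begin{itemize}
\item the translation subgroup $\F_{25}^+\cong\F_5^2$ is a minimal normal subgroup inside the odd Hall subgroup $M=\F_{25}^+\rtimes U_o$;
\item $G/C_G(\F_{25}^+)\cong D_{12}$ is SR;
\item the image $\bar M\cong C_3$ acts irreducibly and nontrivially;
\item $-1\in U$ negates every vector.
\end{itemize}
So eigenspace covering and ambivalence cannot force $\bar M=1$. Your ``I expect'' is exactly where the nilpotency of $M$ itself, and not merely of its image, has to be used.

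Separately, ambivalence does not pass to sections (e.g.\ $A_4\le S_4$). What you actually have is that ambivalence of the quotient $G/K$ lets some element of $G$ negate each $v\in N/K$.

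The missing step is short, and you had its ingredients when you noted that $M_{p'}$ commutes with $M_p$.
\begin{enumerate}
\item Since $N,K\le M$ are nilpotent, $N=O_p(N)\times O_{p'}(N)$, and likewise for $K$.
\item Because $|N:K|$ is a power of $p$, $O_{p'}(N)=O_{p'}(K)$. Hence $N=O_p(N)K$ with $O_p(N)\le M_p$.
\item Therefore $M_{p'}$ centralizes $N/K$. Combined with your unipotence argument, this gives $M\le C_G(N/K)$.
\item So $N/K$ is an irreducible $\F_p[G/M]$-module with $G/M\cong C_2^2$ and $p$ odd. Commuting involutions are simultaneously diagonalizable, so $|N/K|=p$.
\end{enumerate}

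This is the paper's proof in chief-factor language. The paper takes the lower exponent-$p$ central series of each Sylow subgroup of $M$, on whose factors $M$ acts trivially, and refines it by $C_2^2$-eigenspace flags. No SR input beyond $G/M\cong C_2^2$ is needed for supersolvability.
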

\begin{proof}
Write $G=M\rtimes A$, where $A\cong C_2^2$, as in
Theorem~\ref{thm:four-odd}. We first show that $G$ is supersolvable.
For each Sylow $p$-subgroup $P$ of $M$, use its lower exponent-$p$
central series $P_1=P$ and $P_{i+1}=P_i^p[P_i,P]$.
Each factor $P_i/P_{i+1}$ is an $\mathbf F_p$-vector space on which
$M$ acts trivially. Since the commuting involutions in $A$ act
diagonally over $\mathbf F_p$, this factor has a complete flag of
$G$-invariant subspaces. The Sylow subgroups of $M$ are characteristic
and commute with one another. Refining their series by these flags
and appending a cyclic-factor series for $G/M\cong C_2^2$ gives a
$G$-normal series with cyclic prime-order factors.

Here a Sylow $2$-subgroup is $A\cong C_2^2$, so $\Phi(A)=1$.
Theorem~\ref{thm:chankov-supersolvable} therefore expresses $G$
itself as a product of generalized dihedral groups on abelian groups
of odd order. Their odd Hall subgroups form the subgroup $M$, which
is consequently abelian.
\end{proof}

\begin{corl}[Orders four times an odd prime power]
\label{cor:four-prime-power}
For every odd prime $p$ and every $e\ge0$,
\[
 \boxed{f(4p^e)=\frac12\left(
 \sum_{i=0}^e\mathsf p(i)\mathsf p(e-i)
 +\mathbf1_{2\mid e}\mathsf p(e/2)\right).}
\]
The term involving $\mathsf p(e/2)$ is zero when $e$ is odd.
In particular, this count is independent of the odd prime $p$, and
\[
 \sum_{e\ge0}f(4p^e)t^e
 =\frac12\left(\prod_{j\ge1}(1-t^j)^{-2}
              +\prod_{j\ge1}(1-t^{2j})^{-1}\right).
\]
\end{corl}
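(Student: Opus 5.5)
We apply Theorem~\ref{thm:four-odd} with $m=p^e$, which gives $f(4p^e)=h(p^e)+e(p^e)$. The plan is to show that the correction $e(p^e)$ vanishes, and then to evaluate $h(p^e)$ explicitly.

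For the vanishing, let $G$ be an SR-group of order $4p^e$. By Theorem~\ref{thm:four-odd}, its normal odd Hall subgroup $M$ has order $p^e$, so $M$ is a $p$-group and in particular nilpotent. Corollary~\ref{thm:nilpotent-odd-hall} then says that $M$ is abelian. Hence no SR-group of this order has a nonabelian odd Hall subgroup, and $e(p^e)=0$. This step is immediate given the earlier corollaries. The only point to check is that the case $e=0$ is covered: there $M=1$ and $f(4)=1$, in agreement with Corollary~\ref{prop:four-squarefree}.

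Next we evaluate $h(p^e)$ using \eqref{eq:abelian-partitions}. We have $a(p^i)=\mathsf p(i)$, and the divisors of $p^e$ are $p^i$ for $0\le i\le e$. Therefore
\[
\sum_{d\mid p^e}a(d)a(p^e/d)=\sum_{i=0}^e\mathsf p(i)\mathsf p(e-i).
\]
The number $p^e$ is a square exactly when $e$ is even, and in that case $b(p^e)=a(p^{e/2})=\mathsf p(e/2)$. Substituting these into \eqref{eq:four-odd-baseline} gives the boxed formula. The right-hand side involves only partition numbers, so the count does not depend on $p$.

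For the generating function, multiply by $t^e$ and sum over $e\ge0$. The convolution sum gives $\bigl(\sum_i\mathsf p(i)t^i\bigr)^2=\prod_{j\ge1}(1-t^j)^{-2}$. The term $\mathbf 1_{2\mid e}\mathsf p(e/2)$ contributes $\sum_k\mathsf p(k)t^{2k}=\prod_{j\ge1}(1-t^{2j})^{-1}$. Multiplying by $\tfrac12$ yields the stated identity.

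The only step that needs more than bookkeeping is the vanishing of $e(p^e)$. That step reduces entirely to Corollary~\ref{thm:nilpotent-odd-hall}, which rests on the supersolvability argument and Chankov's theorem, so in this proof it should be unobstructed.
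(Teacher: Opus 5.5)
Your proposal is correct and follows the paper's proof. The odd Hall subgroup is a $p$-group, so Corollary~\ref{thm:nilpotent-odd-hall} makes it abelian and gives $e(p^e)=0$. The count is then $h(p^e)$ from Theorem~\ref{thm:four-odd}, evaluated with $a(p^i)=\mathsf p(i)$ and the partition generating function, and your separate check of $e=0$ and your generating-function bookkeeping are both right.
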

\begin{proof}
The normal Hall subgroup of odd order is a $p$-group, so it is
abelian by Corollary~\ref{thm:nilpotent-odd-hall}. The result follows
from Theorem~\ref{thm:four-odd} and the partition generating function.
\end{proof}

An integer is called a \emph{nilpotent number} if every group of that
order is nilpotent. The following criterion turns the group-theoretic
hypothesis of Corollary~\ref{thm:nilpotent-odd-hall} into a condition on
the prime factorization of the order.
\begin{thm}[{\cite[Theorem~1]{PS00}; see also \cite[Introduction]{Just23}}]
\label{thm:nilpotent-number-criterion}
A positive integer $m$ is a nilpotent number if and only if
\[
 \gcd\!\left(m,\prod_{p^a\parallel m}\prod_{i=1}^{a}(p^i-1)\right)=1.
\]
\end{thm}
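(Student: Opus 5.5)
The plan is to prove the two implications separately. The \emph{necessity} direction is by an explicit construction of a non-nilpotent group. The \emph{sufficiency} direction is by a minimal-counterexample argument resting on Schmidt's structure theorem for minimal non-nilpotent groups.

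For necessity, suppose the gcd is not $1$. Then some prime $q\mid m$ divides $p^i-1$ for a prime $p$ with $p^a\parallel m$ and some $1\le i\le a$. Necessarily $q\ne p$. The multiplicative group $\mathbb F_{p^i}^\times$ is cyclic of order $p^i-1$, so it contains an element $\zeta$ of order $q$. I would form $H=\mathbb F_{p^i}^{+}\rtimes\langle\zeta\rangle$, with $\zeta$ acting by multiplication. This action is fixed-point-free on nonzero vectors, so $H$ is a Frobenius group of order $p^iq$. In particular $H$ is not nilpotent, since its Sylow $q$-subgroup is not normal. Because $p^i\mid m$ and $q\mid m$ with $p\ne q$, we have $p^iq\mid m$. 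Then $H\times C_{m/(p^iq)}$ is a non-nilpotent group of order $m$, so $m$ is not a nilpotent number.

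For sufficiency, assume the gcd condition holds for $m$. First observe that the condition passes to every divisor $d\mid m$. Indeed, if $p^b\parallel d$ then $b\le a$, so the product for $d$ divides the product for $m$, and $d\mid m$. Now let $G$ be a non-nilpotent group of order $m$ with $m$ minimal among counterexamples. Every proper subgroup has order a proper divisor satisfying the condition, so by minimality every proper subgroup is nilpotent. Thus $G$ is a minimal non-nilpotent (Schmidt) group.

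Schmidt's theorem then gives $|G|=p^\alpha q^\beta$, a normal Sylow $p$-subgroup $P$, and a cyclic Sylow $q$-subgroup $Q=\langle y\rangle$ acting nontrivially on $P/\Phi(P)$. Moreover, $y^q$ centralizes $P$, and $P/\Phi(P)$ is an irreducible $\mathbb F_p\langle y\rangle$-module on which $y$ acts with order $q$. Such an irreducible faithful module for $C_q$ has dimension $k$ equal to the multiplicative order of $p$ modulo $q$. Hence $q\mid p^k-1$ with $k\le\alpha\le a$, where $p^a\parallel m$. Since $q\mid m$, this contradicts the gcd condition.

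The main obstacle is the appeal to Schmidt's theorem, specifically the facts that $|G|$ has exactly two prime divisors and that $P/\Phi(P)$ is irreducible of dimension the order of $p$ mod $q$. I would cite it as standard (Huppert, III.5.2). If a self-contained route is wanted, an alternative is to use Burnside's normal $p$-complement theorem at the smallest prime where a Sylow subgroup fails to be normal. Either way, the only arithmetic input needed is that a nontrivial coprime action of $C_q$ on an elementary abelian $p$-group of rank $\le a$ forces $q\mid p^k-1$ for some $k\le a$.
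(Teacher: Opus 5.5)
The paper does not prove this criterion; it imports it from \cite{PS00}. Your proposal therefore has to stand on its own, and it does. The necessity construction $\mathbb F_{p^i}^+\rtimes C_q$, times a cyclic factor, is right. The inheritance of the gcd condition by divisors is correctly checked. The reduction to a minimal non-nilpotent group is sound, with the minimum taken over all orders that satisfy the condition and carry a non-nilpotent group.

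You use more of Schmidt's theorem than necessary. Once $G=P\rtimes Q$ with $Q$ acting nontrivially on $P$, Burnside's theorem (a $p'$-automorphism acting trivially on $P/\Phi(P)$ is trivial) produces a nontrivial $q$-subgroup of $\GL_d(\mathbb F_p)$, where $d=\dim P/\Phi(P)\le a$. Lagrange, together with $q\ne p$, already gives $q\mid\prod_{i=1}^{d}(p^i-1)$. Irreducibility of $P/\Phi(P)$ and the multiplicative order of $p$ modulo $q$ are not needed.

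Your fallback remark needs correcting. Burnside's normal $p$-complement theorem requires $P\le Z(N_G(P))$, which forces $P$ to be abelian. The gcd condition only yields $N_G(P)=P\,C_G(P)$, so as phrased that route fails for nonabelian Sylow subgroups. What does work is Frobenius's criterion. Take any nontrivial $p$-subgroup $H$ of order $p^b$, $b\le a$. Then $N_G(H)/C_G(H)$ embeds in $\operatorname{Aut}(H)$, whose $p'$-part divides $\prod_{i=1}^{b}(p^i-1)$, so the gcd condition makes it a $p$-group. Hence $G$ has a normal $p$-complement for every prime $p$, and is nilpotent.

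This avoids both the minimal counterexample and the structure of Schmidt groups, at the cost of the transfer-theoretic Frobenius theorem. Your route instead relies on Schmidt's classification. Either gives a complete proof.
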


\begin{corl}[An arithmetic domain for the four-times-odd formula]
\label{cor:nilpotent-numbers}
Let $m=\prod_i p_i^{a_i}$ be odd. If
\[
 p_i^k\not\equiv1\pmod{p_j}
 \quad\text{for all }i\ne j\text{ and }1\le k\le a_i,
\]
then $f(4m)=h(m)$.
Equivalently, the sufficient condition is
$\gcd(m,\prod_{p^a\parallel m}\prod_{i=1}^a(p^i-1))=1$.
\end{corl}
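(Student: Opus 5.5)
The plan is to reduce the statement to Corollary~\ref{thm:nilpotent-odd-hall} and Theorem~\ref{thm:four-odd}. Under the hypothesis, I will show that every group of order $m$ is nilpotent, so $e(m)=0$.

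\emph{Step 1: the two forms of the condition agree.} Since every prime divisor of $m$ is some $p_j$, the gcd equals $1$ exactly when no $p_j$ divides any factor $p_i^k-1$ with $1\le k\le a_i$. For $i=j$ this never happens, because $p_j\nmid p_j^k-1$. For $i\ne j$ it is precisely the excluded congruence $p_i^k\equiv1\pmod{p_j}$. Hence the displayed congruence condition and the gcd condition are equivalent. By Theorem~\ref{thm:nilpotent-number-criterion}, either one says that $m$ is a nilpotent number.

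\emph{Step 2: apply the structure theory.} Let $G$ be any SR-group of order $4m$. By Theorem~\ref{thm:four-odd}, $G$ has a normal Hall subgroup $M$ of odd order $m$. Because $m$ is a nilpotent number, $M$ is nilpotent, and Corollary~\ref{thm:nilpotent-odd-hall} then makes $M$ abelian. So no SR-group of order $4m$ has a nonabelian odd Hall subgroup, which gives $e(m)=0$. Equation~\eqref{eq:four-odd-structural} then yields $f(4m)=h(m)$.

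There is no serious obstacle here. The only point needing care is Step 1: one must check that the diagonal terms $i=j$ contribute nothing to the gcd. Only then does the stated condition on pairs $i\ne j$ match the criterion in Theorem~\ref{thm:nilpotent-number-criterion} exactly.
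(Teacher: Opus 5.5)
Your proposal is correct and follows the paper's proof: Theorem~\ref{thm:nilpotent-number-criterion} makes the odd Hall subgroup nilpotent, and Corollary~\ref{thm:nilpotent-odd-hall} with Theorem~\ref{thm:four-odd} then gives $e(m)=0$ and $f(4m)=h(m)$. Your Step~1 is also correct: it checks that the diagonal terms $i=j$ contribute nothing, so the congruence condition matches the gcd criterion, a detail the paper leaves implicit.
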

\begin{proof}
By Theorem~\ref{thm:nilpotent-number-criterion}, every group of
order $m$, including the odd Hall subgroup, is nilpotent. Apply
Corollary~\ref{thm:nilpotent-odd-hall} and
Theorem~\ref{thm:four-odd}.
\end{proof}

\begin{corl}[The progression $n\equiv4\pmod8$ in the census]
\label{cor:four-odd-census}
For odd $m$ with $4m\le2000$, the classification of
Theorem~\ref{thm:main-classification} gives
\begin{equation}\label{eq:four-odd-finite}
 \boxed{f(4m)=h(m)+
 \begin{cases}1,&m\in\{75,375,405\},\\0,&\text{otherwise}.
 \end{cases}}
\end{equation}
The three exceptions to the abelian-Hall-subgroup classification are
as follows.
\begin{center}
\begin{tabular}{rrrrl}
\toprule
$4m$&$h(m)$&$e(m)$&$f(4m)$&SmallGroups identifier\\\midrule
300&5&1&6&$[300,25]$\\
1500&10&1&11&$[1500,37]$\\
1620&20&1&21&$[1620,422]$\\\bottomrule
\end{tabular}
\end{center}
\end{corl}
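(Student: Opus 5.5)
The plan is to combine the structural count of Theorem~\ref{thm:four-odd} with the census itself. By \eqref{eq:four-odd-structural}, every odd $m$ with $4m\le2000$ satisfies $f(4m)=h(m)+e(m)$. Since $f(4m)$ is read off from the completed classification (these orders are catalogue orders, none equals $1024$), the identity reduces to computing $h(m)$ from \eqref{eq:four-odd-baseline} and showing $e(m)=f(4m)-h(m)$ equals $1$ exactly for $m\in\{75,375,405\}$ and $0$ otherwise.

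First I would shrink the set of $m$ needing any computation. Corollary~\ref{prop:four-squarefree} gives $e(m)=0$ for squarefree $m$, and Corollary~\ref{cor:four-prime-power} gives $e(m)=0$ for prime powers. Corollary~\ref{cor:nilpotent-numbers} gives $e(m)=0$ whenever $m$ is a nilpotent number. The remaining $m\le500$ are odd, non-squarefree, divisible by at least two primes, and satisfy $p_i^k\equiv1\pmod{p_j}$ for some $i\neq j$ and $k\le a_i$. Examples are $m=75=3\cdot5^2$ (since $5^2\equiv1\pmod3$), $m=63$ (since $3\equiv 1\pmod{\cdot}$ fails but $7\equiv1\pmod3$), and $m=405=3^4\cdot5$ (since $3^4\equiv1\pmod5$). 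This residual list is short, and for each such $m$ I would evaluate $h(m)$ exactly and compare with the tabulated $f(4m)$ in Table~\ref{tab:all-counts}.

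For each residual $m$, I would also identify the SR groups of order $4m$ whose odd Hall subgroup is nonabelian, directly from the catalogue records. For the three exceptions, I would check in GAP that $[300,25]$, $[1500,37]$ and $[1620,422]$ pass the Wigner test and have nonabelian normal Hall subgroup $M$. This confirms $e(m)\ge1$ independently of the count difference, and I would verify that no other accepted group at these orders has nonabelian $M$. For the other residual $m$, the equality $f(4m)=h(m)$ forces $e(m)=0$ by the nonnegativity in \eqref{eq:four-odd-structural}. As an arithmetic cross-check, I would run the formula over all $250$ odd $m\le 500$, which mirrors the check done for Theorem~\ref{thm:twice-odd}.

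The main obstacle is that the statement is fundamentally a finite computational verification resting on completeness of the census. The proof can do no better than cite Theorem~\ref{thm:main-classification} for the values $f(4m)$. The conceptual content is confined to the reductions showing $e(m)=0$ on most of the range, so that the exceptional set is plausibly small. An explanation of the exceptions (for instance, $M$ a nonabelian group of order $75$ or $5^3\cdot3$, $3^4\cdot 5$ with $C_2^2$ acting) is deferred to the infinite families announced at the start of the section, and is not needed for the boxed identity.
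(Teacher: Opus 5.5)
Your proof is correct, but it obtains $e(m)$ differently from the paper.

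The paper audits every one of the $654$ catalogue records at the $250$ orders $4m$. For each record it computes the odd Hall subgroup, verifies normality and the quotient $C_2^2$, and tests abelianness. Exactly three records, $[300,25]$, $[1500,37]$ and $[1620,422]$, have a nonabelian Hall subgroup. So the paper gets $e(m)$ by directly counting groups, and Theorem~\ref{thm:four-odd} supplies $h(m)$ for the abelian part.

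You instead treat $f(4m)=h(m)+e(m)$ as an exact identity and read off $e(m)=f(4m)-h(m)$ from Table~\ref{tab:all-counts}. You use Corollaries~\ref{prop:four-squarefree}, \ref{cor:four-prime-power} and~\ref{cor:nilpotent-numbers} to make $e(m)=0$ a theorem for every $m$ outside a short residual list, independently of the census. Only the three exceptional groups are then checked in GAP.

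Your route has two advantages. It is computationally much lighter, needing only arithmetic on the count table and three group checks. It also shows that most of the boxed formula does not depend on the catalogue at all.

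The paper's audit has a different advantage. Because it examines the records themselves, it gives a record-level consistency check of the catalogue against the structural statement of Theorem~\ref{thm:four-odd} at every order. It also finds the exceptional identifiers directly rather than inferring their number by subtraction. Both arguments rest equally on completeness of the census.

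Two cosmetic points. Your parenthetical for $m=63$ is garbled; the relevant congruence is simply $7\equiv1\pmod3$. You also do not need the nonnegativity of $e(m)$: the equality $f(4m)=h(m)$ gives $e(m)=0$ directly.
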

\begin{proof}
There are 654 records in the catalogues of Section~\ref{sec:census}
at the 250 orders congruent to four modulo eight. The program
\path{audit_count_patterns.g}, documented in
Appendix~\ref{app:implementation}, computes each odd Hall subgroup
and tests whether it is abelian. Exactly 651 records have an abelian
Hall subgroup; the three remaining records have the identifiers in
the table. Their Hall subgroups have orders $75$, $375$, and $405$,
respectively. The audit verifies normality and the quotient $C_2^2$
for every record and checks the three exceptions directly using
ordinary irreducible characters. Theorem~\ref{thm:four-odd} supplies
the exact count of the abelian-Hall-subgroup part, yielding the stated
formula. This proves the assertion throughout the stated finite range.
The infinite correction families below show why the exception list
does not extend unchanged beyond this range.
\end{proof}

\subsection{Orders eight times an odd prime}
\begin{thm}[Orders eight times an odd prime]\label{thm:eight-prime}
For every odd prime $p\ge5$, there are exactly three SR-groups of
order $8p$. They are
\[
 C_2^2\times\operatorname{Dih}(C_p),\qquad
 \operatorname{Dih}(C_{4p}),\qquad
 \langle a,t\mid a^{4p}=1,\ t^2=a^{2p},\ tat^{-1}=a^{-1}\rangle.
\]
In particular, $f(8p)=3$. The last group is the dicyclic group of
order $8p$.
\end{thm}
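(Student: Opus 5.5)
The plan is to show that the Sylow $p$-subgroup $P\cong C_p$ is normal, reduce to a split extension $G=P\rtimes S$ with $|S|=8$, and then run a short case analysis on $S$ and on the kernel of its action on $P$.

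\emph{Step 1: normality of $P$.} Let $G$ be SR of order $8p$ with $p\ge5$. By Sylow's theorem the number of Sylow $p$-subgroups divides $8$ and is congruent to $1$ modulo $p$. For $p\ge5$ this number is $1$, except possibly $8$ when $p=7$. In that exceptional case there are $8\cdot6=48$ elements of order $7$. The remaining $8$ elements form a normal Sylow $2$-subgroup $N$, and $G/N\cong C_7$. This contradicts Proposition~\ref{prop:elementary}, because $G/G'$ must be a $2$-group. Hence $P\lhd G$, and by Schur--Zassenhaus $G=P\rtimes S$ with $|S|=8$. The quotient $S\cong G/P$ is SR by Proposition~\ref{prop:closure}. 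Among groups of order $8$, the groups $C_8$ and $C_4\times C_2$ are excluded by Proposition~\ref{prop:elementary}. So $S\in\{C_2^3,D_8,Q_8\}$.

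\emph{Step 2: the action on $P$.} The group $G/C_G(P)$ embeds in the cyclic group $\operatorname{Aut}(P)$. Being an abelian quotient of $G$, it is elementary abelian by Proposition~\ref{prop:elementary}, so it has order at most $2$. It cannot be trivial, since then $P\le Z(G)$, whereas central elements have order at most two. Hence $K=C_S(P)$ has index $2$ in $S$, and every $t\in S\setminus K$ inverts $P$.

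\emph{Step 3: case analysis.} I would go through the index-two subgroups $K$ of each $S$.
\begin{itemize}
\item If $S=C_2^3$, all choices of $K$ are equivalent under $\operatorname{Aut}(S)$, and $G\cong C_2^2\times\operatorname{Dih}(C_p)$.
\item If $S=Q_8$, then $K\cong C_4$ (all choices equivalent), and $G$ is the displayed dicyclic group, with $a$ a generator of $P\times K\cong C_{4p}$.
\item If $S=D_8$ and $K=C_4$, then $t$ inverts the cyclic group $P\times K\cong C_{4p}$, so $G\cong\operatorname{Dih}(C_{4p})$.
\item If $S=D_8$ and $K\cong C_2^2$, the two Klein subgroups are swapped by an automorphism of $D_8$, so one case suffices. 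Write $z$ for the central involution of $D_8$ and pick $k\in K\setminus\langle z\rangle$, so that $tkt^{-1}=kz$. For $1\ne x\in P$, conjugation by $P$, $K$ and $z$ fixes $xk$, while $t$ sends it to $x^{-1}kz$. Thus the class of $xk$ is $\{xk,x^{-1}kz\}$, which does not contain $(xk)^{-1}=x^{-1}k$. This group is not ambivalent and is excluded.
\end{itemize}
The three surviving groups are pairwise nonisomorphic because their Sylow $2$-subgroups $C_2^3$, $D_8$ and $Q_8$ differ.

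\emph{Step 4: the survivors are SR.} The group $C_2^2\times\operatorname{Dih}(C_p)$ is SR by Propositions~\ref{prop:closure} and~\ref{prop:generalized-dihedral-sr}. The group $\operatorname{Dih}(C_{4p})$ is SR by Proposition~\ref{prop:generalized-dihedral-sr}. I expect the dicyclic group to be the main obstacle, since it is not produced directly by the earlier closure results. The first thing I would try is Wigner's equality (Theorem~\ref{thm:wigner}).
\begin{itemize}
\item The elements of $\langle a\rangle$ form the central classes $\{1\}$ and $\{a^{2p}\}$, together with the pairs $\{a^i,a^{-i}\}$.
\item The elements $a^it$ all have order $4$ and fall into two classes of size $2p$.
\end{itemize}
From these classes both $\sum_g c(g)^2$ and the square-root counts $r_2$ are explicit, and $r_2$ is supported on $\langle a^2\rangle$. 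The check then reduces to an elementary identity in $p$. As a cross-check I would verify directly that the $2p-1$ two-dimensional characters, all induced from $\langle a\rangle$, and the four linear characters are real and have multiplicity-free products.
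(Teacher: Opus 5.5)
Your proof is correct. Its middle part follows the paper's argument closely: the index-two kernel acting by inversion, the reduction to $S\in\{C_2^3,D_8,Q_8\}$, and the exclusion of the $D_8$ case with Klein kernel. Your computation of the class $\{xk,x^{-1}kz\}$ is an explicit version of the paper's remark that $C_{D_8}(s)=K$.

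You depart from the paper in three places.
\begin{enumerate}
\item \emph{Normality of $P$.} The paper assumes $O_p(G)=1$ and invokes the Kazarin--Chankov solvability theorem to get a self-centralizing Fitting subgroup $F=O_2(G)$ with $p\mid|\operatorname{Aut}(F)|$. This forces $F\cong C_2^3$, $p=7$, and an odd quotient of order $7$. Your Sylow count reaches the same $p=7$, $G/N\cong C_7$ contradiction without any appeal to solvability. It is more elementary and fully adequate at this order.
\item \emph{SR property of the dicyclic group.} The paper checks ambivalence by hand. It proves multiplicity-freeness from the decomposition $\chi_\lambda\chi_\mu=I_{\lambda\mu}+I_{\lambda\mu^{-1}}$ of products of characters induced from $\langle a\rangle$, which exhibits the constituents explicitly. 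Your route through Theorem~\ref{thm:wigner} needs only the class list and the square map, and it does close. One has $r_2(a^{2p})=4p+2$, $r_2=2$ on the other $2p-1$ elements of $\langle a^2\rangle$, and $r_2=0$ elsewhere. Both $\sum_g r_2(g)^3$ and $\sum_g c(g)^2$ then equal $64p^3+96p^2+64p$. Since equality characterizes SR groups, this also gives ambivalence without a separate check. You should write this computation out rather than leave it as an expectation.
\item \emph{Non-isomorphism.} You distinguish the three groups by their Sylow $2$-subgroups $C_2^3$, $D_8$, $Q_8$, whereas the paper uses the involution counts $4p+3$, $4p+1$, $1$. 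Either invariant suffices.
\end{enumerate}
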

\begin{proof}
Let $G$ be an SR-group of order $8p$. We first show that its Sylow
$p$-subgroup is normal. Otherwise $O_p(G)=1$, and Theorem~\ref{thm:sr-solvable} implies
that the Fitting subgroup $F=O_2(G)$ is nontrivial and satisfies
$C_G(F)\le F$. Consequently $G/F$ embeds in $\operatorname{Out}(F)$,
so $p$ divides $|\operatorname{Aut}(F)|$. Among $2$-groups of order at
most eight, the only automorphism groups having prime divisors at
least five arise from $F\cong C_2^3$, with the sole such prime being
seven. Thus the only remaining possibility is $p=7$ and $|F|=8$.
But then $G/F$ has order seven, contradicting quotient closure and
the nonexistence of nontrivial odd-order SR-groups.

Hence $G=C_p\rtimes S$, where the Sylow $2$-subgroup $S$ is an
SR quotient of order eight. Therefore $S$ is $C_2^3$, $D_8$, or $Q_8$.
The action on $C_p$ is nontrivial, since otherwise $C_p\le Z(G)$.
As $S/S'$ is elementary abelian and $\operatorname{Aut}(C_p)$ is
cyclic, the action has image of order two and acts by inversion.
Up to automorphisms of $S$, there is one index-two kernel for $C_2^3$,
two for $D_8$ (a cyclic subgroup of order four and a subgroup $C_2^2$),
and one for $Q_8$.

The $D_8$ action with kernel $K\cong C_2^2$ is impossible. Choose
$1\ne x\in C_p$ and a noncentral involution $s\in K$. Since $s$
centralizes $C_p$, an element inverting $xs$ would need an $S$-component
outside $K$ that centralizes $s$. This contradicts
$C_{D_8}(s)=K$. The remaining three actions give precisely the groups
in the statement.

The dihedral case is SR by
Proposition~\ref{prop:generalized-dihedral-sr}. For the dicyclic
case, let $A=\langle a\rangle$. The element $t$ inverts $A$,
while conjugation by $a^p$ sends every element of $At$ to its inverse:
each such element has square $a^{2p}$, and
$a^p(at)a^{-p}=a^{2p}(at)$, with the same identity for any element
of that coset. Its nonlinear irreducible characters are
$\chi_\lambda=\operatorname{Ind}_A^G\lambda$, where
$\lambda\ne\lambda^{-1}$, indexed by inverse pairs. If
$I_\eta=\operatorname{Ind}_A^G\eta$, then
\[
 \chi_\lambda\chi_\mu=I_{\lambda\mu}+I_{\lambda\mu^{-1}}.
\]
The two terms cannot have a common irreducible constituent, since
equality of their inverse pairs would force $\lambda^2=1$ or
$\mu^2=1$. Each term is irreducible unless its indexing character
is self-inverse, in which case it is the sum of its two distinct
linear extensions. Thus these tensor products are multiplicity-free;
products involving a linear character are irreducible.
The first group in the statement is SR by
Propositions~\ref{prop:generalized-dihedral-sr} and~\ref{prop:closure}.
Finally, the respective numbers of involutions are $4p+3$, $4p+1$,
and one, so the three groups are pairwise nonisomorphic.
\end{proof}

The restriction $p\ge5$ is essential: Table~\ref{tab:all-counts}
has $f(24)=4$.
In addition to the same three families at $p=3$, the group $S_4$
is SR and has nonnormal Sylow $3$-subgroups.

\subsection{Removing elementary abelian direct factors}
\begin{prop}[Removing a central direct factor]\label{prop:count-c2}
Let $c(n)$ count the SR-groups of order $n$ having no direct factor
isomorphic to $C_2$, and put $f(x)=0$ when $x$ is not a positive integer.
Then, for every positive integer $n$,
\begin{equation}\label{eq:c2-count-recurrence}
 c(n)=f(n)-f(n/2),\qquad
 f(2^k m)=\sum_{j=0}^{k}c(2^j m)\quad(m\text{ odd}).
\end{equation}
In particular, $f(2n)\ge f(n)$.
\end{prop}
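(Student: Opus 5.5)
The plan is to show that $H\mapsto H\times C_2$ is a bijection from isomorphism classes of SR-groups of order $n/2$ onto isomorphism classes of SR-groups of order $n$ having a direct factor isomorphic to $C_2$. When $n$ is odd or $n=1$, both sides are empty, so this case is trivial. Granting the bijection, the SR-groups of order $n$ split into those with a $C_2$ direct factor, numbering $f(n/2)$, and those without, numbering $c(n)$. This gives $c(n)=f(n)-f(n/2)$ directly.

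The bijection rests on three facts.

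1. \emph{The map is well defined and lands in the right set.} If $H$ is SR, then $H\times C_2$ is SR by Proposition~\ref{prop:closure}, since $C_2$ is elementary abelian and hence SR by Proposition~\ref{prop:elementary}. It visibly has a $C_2$ direct factor.
2. \emph{Surjectivity.} If $G$ is SR of order $n$ and $G\cong K\times C_2$, the converse part of Proposition~\ref{prop:closure} makes $K$ SR, and $|K|=n/2$.
3. \emph{Injectivity.} If $H\times C_2\cong H'\times C_2$, then $H\cong H'$ by Krull--Remak--Schmidt cancellation, as in the central splitting discussion of Section~\ref{sec:remaining}. Cancellation of a direct factor holds for all finite groups, not only $2$-groups, so no nilpotency assumption is needed.

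Note that Lemma~\ref{lem:c2-factor} is not needed here, because the statement is phrased in terms of direct factors.

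For the second identity, fix odd $m$ and telescope. Summing $c(2^jm)=f(2^jm)-f(2^{j-1}m)$ for $j=0,\dots,k$ gives $f(2^km)-f(m/2)$. The last term vanishes because $m/2$ is not an integer, consistent with the convention $f(x)=0$. Finally, $c(2n)\ge0$ gives $f(2n)\ge f(n)$.

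The only step needing care is injectivity, that is, cancellation of $C_2$ for arbitrary finite groups. I would cite Krull--Remak--Schmidt: decompose $H$ and $H'$ into indecomposables, and the multisets of factors of $H\times C_2$ and $H'\times C_2$ agree up to isomorphism. Removing one copy of $C_2$, which is indecomposable, leaves the multisets for $H$ and $H'$ equal. Everything else is bookkeeping.
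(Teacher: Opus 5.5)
Your proposal is correct and is essentially the paper's proof. Both establish the bijection $[K]\mapsto[K\times C_2]$ using Proposition~\ref{prop:closure} in both directions and Krull--Remak--Schmidt cancellation for injectivity, then subtract and telescope. Your explicit treatment of odd $n$ and of the vanishing term $f(m/2)$ spells out the paper's brief ``iteration gives the second'' step.
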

\begin{proof}
The map $[K]\mapsto[K\times C_2]$ is a bijection from the SR-groups
of order $n/2$ onto those SR-groups of order $n$ possessing a $C_2$
direct factor. Proposition~\ref{prop:closure} gives both directions, and injectivity follows
from cancellation in the Krull--Remak--Schmidt theorem. Subtracting
this count proves the first identity, and iteration gives the second.
\end{proof}

\begin{corl}[The class-two increment]\label{cor:class2-count-increment}
Let $N_2(2^k)$ count the SR-groups of nilpotency class exactly two and
order $2^k$, and let $s_k$ count the special ones. Then
\begin{equation}\label{eq:class2-count-increment}
 N_2(2^k)-N_2(2^{k-1})=s_k.
\end{equation}
The catalogues of Section~\ref{sec:census} and
Theorem~\ref{thm:class2-complete} give the following values.
\begin{center}
\begin{tabular}{c|rrrrrrrr}
\toprule
$2^k$&8&16&32&64&128&256&512&1024\\\midrule
$N_2(2^k)$&2&2&7&10&20&42&95&221\\
$s_k$&2&0&5&3&10&22&53&126\\\bottomrule
\end{tabular}
\end{center}
\end{corl}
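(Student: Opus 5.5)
The identity \eqref{eq:class2-count-increment} follows from a bijection between the non-special class-two SR-groups of order $2^k$ and all class-two SR-groups of order $2^{k-1}$. The table then follows by reading off the catalogue counts.

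First I will partition the class-two SR-groups of order $2^k$ into special and non-special ones. By Lemma~\ref{lem:class2-split-new}, any class-two SR $2$-group $G$ has $Z(G)$ and $G/G'$ elementary abelian and $\Phi(G)=G'$. If $G$ is non-special, then $Z(G)\ne G'$, so some central involution $z$ lies outside $\Phi(G)$. Lemma~\ref{lem:c2-factor} then gives $G=K\times\langle z\rangle$. The factor $K$ is SR by Proposition~\ref{prop:closure}. It has nilpotency class two, because $K'=G'\ne1$ and $K$ embeds in $G$, so its class is at most two.

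Conversely, if $K$ is class-two SR of order $2^{k-1}$, then $K\times C_2$ is SR by Proposition~\ref{prop:closure}. It has class two. It is non-special, because its centre contains the factor $C_2$, which lies outside the derived subgroup $K'\times1$. The map $[K]\mapsto[K\times C_2]$ is injective by Krull--Remak--Schmidt cancellation, exactly as in Proposition~\ref{prop:count-c2}. Hence the non-special class-two groups of order $2^k$ number $N_2(2^{k-1})$, and subtracting gives $N_2(2^k)-N_2(2^{k-1})=s_k$.

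The one point to be careful about is the definition of special used here, namely $Z=G'=\Phi$. For class two this is exactly the absence of a $C_2$ direct factor: Lemma~\ref{lem:c2-factor} together with $\Phi(G)=G'\le Z(G)$ shows that a $C_2$ factor exists if and only if $\Omega_1(Z(G))=Z(G)\not\le G'$.

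For the table, the values of $N_2(2^k)$ for $k\le9$ come from the catalogue census of Section~\ref{sec:census}, where the nilpotency class is computed for each SmallGroups record. The value $N_2(1024)=221$ and $s_{10}=126$ come from Theorem~\ref{thm:class2-complete}. The values of $s_k$ for $k\le9$ can then be obtained as differences, or computed independently by testing $Z=G'$ on each catalogue record. Agreement between the two serves as a consistency check. At $k=3$ the convention $N_2(4)=0$ gives $s_3=2$, corresponding to $D_8$ and $Q_8$.

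There is no real obstacle here. The only care needed is making sure that the catalogue class-two counts are exact, and that $N_2(2^2)=0$ is used as the base case.
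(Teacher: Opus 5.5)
Your proposal is correct and follows essentially the same route as the paper. For class-two SR groups, being special is equivalent to having no $C_2$ direct factor (via Lemma~\ref{lem:class2-split-new}, with $\Phi(G)=G'$). The bijection $[K]\mapsto[K\times C_2]$ from Proposition~\ref{prop:count-c2} preserves nilpotency class two, which gives the increment identity; the table is then read off from the catalogues and Theorem~\ref{thm:class2-complete}.
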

\begin{proof}
By Lemma~\ref{lem:class2-split-new}, a class-two SR-group has no $C_2$
direct factor precisely when it is special. The bijection in the proof
of Proposition~\ref{prop:count-c2} preserves nilpotency class for every
nonabelian group, giving the stated difference.
\end{proof}

\subsection{A uniform counting formula for a structural subclass}
The first two progressions belong to a larger counting problem.
Theorem~\ref{thm:chankov-supersolvable} determines a structural
subclass whose count can be expressed entirely in terms of partitions.

\begin{corl}[The dihedral-product subclass]\label{thm:dihedral-subclass}
For a finite SR-group $G$, the following are equivalent:
\begin{enumerate}
\item $G$ is supersolvable and its Sylow $2$-subgroups are abelian;
\item its Sylow $2$-subgroups are abelian and its normal Hall subgroup
of odd order is abelian;
\item $G\cong\prod_{i=1}^k\operatorname{Dih}(A_i)$ for abelian
groups $A_i$ of odd order, allowing $A_i=1$.
\end{enumerate}
The multiset of isomorphism types $\{A_1,\ldots,A_k\}$ is unique,
and $k=v_2(|G|)$.
\end{corl}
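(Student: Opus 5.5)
I plan to prove the cycle (3)$\Rightarrow$(1), (3)$\Rightarrow$(2), (1)$\Rightarrow$(3) and (2)$\Rightarrow$(3), and then the uniqueness statement.

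\emph{(3)$\Rightarrow$(1),(2).} A Sylow $2$-subgroup of $\prod_i\operatorname{Dih}(A_i)$ is $\prod_i\langle t_i\rangle\cong C_2^k$, which is abelian. The odd Hall subgroup is $\prod_i A_i$, which is abelian and normal. Each $\operatorname{Dih}(A_i)$ is supersolvable: take a chief series of $A_i$ through subgroups, each invariant under inversion, with cyclic factors, and append the top $C_2$. A direct product of supersolvable groups is supersolvable.

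\emph{(1)$\Rightarrow$(3).} Apply Theorem~\ref{thm:chankov-supersolvable}. Its Sylow $2$-subgroup $S$ is an abelian SR quotient-closed piece of structure. Every element of $S$ is an element of $G$, but we need $\Phi(S)=1$. Since $S$ is abelian, $\Phi(S)=S^2$. Theorem~\ref{thm:chankov-supersolvable} gives $G/\Phi(S)$ as a dihedral product and $\Phi(S)\lhd G$. To show $S^2=1$, I would argue that an element $x\in S$ is conjugate in $G$ to $x^{-1}$. Because $S$ is abelian, Burnside's fusion theorem says $G$-conjugacy on $S$ is controlled by $N_G(S)$. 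Hence $x^{-1}=x^n$ for some $n\in N_G(S)$. Then $n$ acts on $S$, and modulo $C_G(S)$ it has odd order. Indeed, $N_G(S)/C_G(S)$ has odd order, since $S\le C_G(S)$ and $S$ is a Sylow subgroup. The argument from the introduction then forces $x=x^{-1}$. This gives $S$ elementary abelian, so $\Phi(S)=1$ and $G$ itself is the dihedral product. Fusion control in an abelian Sylow subgroup is the step I expect to need care, but Burnside's theorem handles it exactly.

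\emph{(2)$\Rightarrow$(3).} Let $M$ be the abelian normal odd Hall subgroup and $S$ an abelian Sylow $2$-subgroup. The same fusion argument makes $S$ elementary abelian, so $G=M\rtimes S$ with $S\cong C_2^k$. Decompose $M=\bigoplus_\lambda M_\lambda$ into simultaneous sign eigenspaces over $\lambda\in\operatorname{Hom}(S,\{\pm1\})$, exactly as in the proof of Theorem~\ref{thm:four-odd}. The trivial eigenspace is central, hence zero. The key step is to show that the set $\Lambda$ of nontrivial signs with $M_\lambda\neq0$ is linearly independent in the $\mathbb F_2$-dual of $S$.

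I expect this independence to be the main obstacle. Suppose there is a minimal dependence $\lambda_1\cdots\lambda_s=1$, and pick nonzero $x_i\in M_{\lambda_i}$. The element $x=x_1+\cdots+x_s$ must be inverted by some $ms\in G$. Since $M$ is abelian, this requires $u\in S$ with $\lambda_i(u)=-1$ for every $i$. Then $\prod\lambda_i(u)=(-1)^s$, so $s$ must be even. This contradicts only the odd-length dependences, so it is not enough by itself.

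For even $s$, I would instead use an element $y=x+u_0$ with $u_0\in S$ chosen so that $\lambda_1(u_0)=-1$. Conjugating $y$ to $y^{-1}$ then requires $u$ centralizing $u_0$ (automatic, as $S$ is abelian) and satisfying $\lambda_i(u)=-1$ on the components moved appropriately. An alternative fallback is to prove supersolvability directly, since $S$ acts diagonally on $M/\Phi$ layers as in Corollary~\ref{thm:nilpotent-odd-hall}. That would reduce (2) to (1), and this is the route I would actually take.

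Given independence of $\Lambda$, extend $\Lambda$ to a basis of $S^*$ and take the dual basis $t_i$ of $S$. This yields $G\cong\prod\operatorname{Dih}(M_{\lambda_i})\times C_2^{k-|\Lambda|}$, with $C_2=\operatorname{Dih}(1)$.

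\emph{Uniqueness.} Each $\operatorname{Dih}(A)$ with $|A|$ odd is directly indecomposable, as shown in Theorem~\ref{thm:four-odd}. Krull--Remak--Schmidt then gives the multiset, and $|G|_2=2^k$ gives $k=v_2(|G|)$.
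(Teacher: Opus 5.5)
Your argument is correct once you actually take the fallback you name for (2)$\Rightarrow$(3), and its skeleton is then the paper's. (2)$\Rightarrow$(1) holds because every subgroup of each sign subgroup $M_\lambda$ is $G$-invariant, so a prime-order refinement of $M$ followed by a series of the abelian quotient $G/M$ gives supersolvability. (1)$\Rightarrow$(3) follows from Theorem~\ref{thm:chankov-supersolvable} once $\Phi(S)=1$. (3)$\Rightarrow$(1),(2) is direct, and uniqueness is Krull--Remak--Schmidt; there you should add that $A_i$ is recovered from $\operatorname{Dih}(A_i)$ as its characteristic subgroup of odd-order elements.

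The genuine difference is how you obtain $\Phi(S)=1$. The paper passes to $H=G/O_{2'}(G)$ and uses solvability (Theorem~\ref{thm:sr-solvable}) to get $F(H)=O_2(H)$. An abelian Sylow $2$-subgroup centralizes $F(H)$, so $H/C_H(F(H))$ is an odd-order SR quotient and hence trivial. Therefore $H\cong S$ is an abelian SR group, hence elementary abelian. Your Burnside-fusion argument is shorter and free of solvability: it works in any ambivalent group with abelian Sylow $2$-subgroup. The paper's argument, however, also shows that every SR group with abelian Sylow $2$-subgroups has a normal odd Hall subgroup, which is what lets the phrase in (2) stand without presupposing existence. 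Under (1) or (2) you could equally read off $\Phi(S)=1$ from $S\cong G/O_{2'}(G)$ being an abelian SR quotient.

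You should delete the independence paragraph and the paragraph beginning ``Given independence of $\Lambda$'': that route cannot be completed from ambivalence alone. For $M\rtimes S$ with diagonal sign action, an element $mu$ is $M$-conjugate to one whose $M$-part is supported on signs trivial on $u$. Hence ambivalence is exactly the condition that no odd-size subset of $\Lambda$ sums to zero, and your elements $xu_0$ impose nothing beyond what $x$ does.

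For instance, take $C_p^4\rtimes C_2^3$ with signs $e_1^*,e_2^*,e_3^*,e_1^*+e_2^*+e_3^*$. It is ambivalent: the involution $(1,1,1)$ inverts $M$, and it inverts every $mu$ with $m$ fixed by $u$. But it is not SR. It is supersolvable with elementary abelian Sylow $2$-subgroup, so Theorem~\ref{thm:chankov-supersolvable} would make it a product of three generalized dihedral groups. Such a product has at most three nonzero sign subgroups, whereas this group has four. Independence of $\Lambda$ therefore genuinely needs the tensor-product condition, and in the paper that condition enters through Chankov's theorem.
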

\begin{proof}
The trivial group corresponds to the empty product. For a nontrivial
group, we first justify normality of the odd Hall subgroup whenever the
Sylow $2$-subgroups of an SR-group are abelian. Put
$H=G/O_{2'}(G)$. Theorem~\ref{thm:sr-solvable} gives
$F(H)=O_2(H)$, since
$O_{2'}(H)=1$. An abelian Sylow $2$-subgroup of $H$ centralizes
$F(H)$, so $H/C_H(F(H))$ is an odd-order SR quotient and is trivial.
The self-centralizing property of the Fitting subgroup of a solvable
group now gives $H=F(H)$. Thus $O_{2'}(G)$ is a Hall subgroup;
the quotient is the Sylow $2$-subgroup and is elementary abelian.

Under (1), Theorem~\ref{thm:chankov-supersolvable} applies and
$\Phi(S)=1$, giving (3).
Under (2), write $G=A\rtimes C_2^k$. The action on the odd abelian
group $A$ splits into simultaneous sign subgroups. Every subgroup of
each sign subgroup is invariant under the action; a refinement by
cyclic prime-order factors proves supersolvability. This gives (1).
Every group in (3) satisfies (2) and is SR by
Propositions~\ref{prop:generalized-dihedral-sr} and~\ref{prop:closure}.
Finally, each $\operatorname{Dih}(A_i)$ is directly indecomposable:
two nontrivial direct factors would both be SR of even order,
contradicting $v_2(|\operatorname{Dih}(A_i)|)=1$.
Remak--Krull--Schmidt uniqueness, followed by recovery of each
characteristic odd subgroup, proves uniqueness of the multiset.
\end{proof}

Let $B_k(m)$ count this subclass at order $2^k m$, with $m$ odd,
$B_0(m)=\mathbf1_{m=1}$ and $B_{-1}(m)=0$. In formal Dirichlet
series, multiplication is defined by $d^{-s}e^{-s}=(de)^{-s}$;
no assertion of analytic convergence is needed.
\begin{corl}[Explicit multiset formula]\label{prop:multiset-formula}
One has
\begin{equation}\label{eq:dihedral-gf}
 \sum_{k\ge0}\ \sum_{m\text{ odd}}B_k(m)t^k m^{-s}
 =\prod_{d\text{ odd}}(1-t d^{-s})^{-a(d)}.
\end{equation}
Equivalently, for $k\ge1$,
\begin{equation}\label{eq:dihedral-recurrence}
 kB_k(m)=\sum_{i=1}^k\ \sum_{d^i\mid m}
                  a(d)B_{k-i}(m/d^i).
\end{equation}
Thus every $B_k(m)$ is computable using only integer partitions and
the divisors of $m$.
\end{corl}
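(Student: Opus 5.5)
By Corollary~\ref{thm:dihedral-subclass}, the groups in the subclass are in bijection with finite multisets of isomorphism types of odd-order abelian groups. Each element of a multiset contributes one factor $\operatorname{Dih}(A)$. A multiset $\{A_1,\ldots,A_k\}$ yields a group of order $2^k\prod_i|A_i|$. Hence $B_k(m)$ is the number of multisets of size $k$ of odd abelian isomorphism types whose orders multiply to $m$. The trivial group $A=1$ is allowed and is counted by $a(1)=1$; it gives the factor $C_2$. So the plan is to prove a pure multiset-counting identity with weights $t$ for size and $d^{-s}$ for order.

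For \eqref{eq:dihedral-gf}, I will group the types by their order. For each odd $d$ there are $a(d)$ types of order $d$. A multiset is an independent choice, for each such type $\tau$, of a multiplicity $n_\tau\ge0$, with only finitely many $n_\tau$ nonzero. The contribution of a single type is the geometric series
\[
\sum_{n\ge0}(td^{-s})^n=(1-td^{-s})^{-1}.
\]
Taking the product over all types gives $\prod_{d\text{ odd}}(1-td^{-s})^{-a(d)}$.

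This product is infinite, so I have to check that it is well defined formally. For a fixed coefficient $t^km^{-s}$, only the finitely many factors with $d\mid m$ and $d>1$ can contribute a nonconstant term. The exception is $d=1$, whose factor is $(1-t)^{-1}$, and this is a legitimate series in $t$. So every coefficient is a finite sum, and the product makes sense in the ring of formal series in $t$ with formal Dirichlet coefficients.

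For \eqref{eq:dihedral-recurrence}, I will take the logarithmic derivative in $t$. Let $F$ be the series. Then
\[
\log F=\sum_{d}a(d)\sum_{i\ge1}\frac{t^i d^{-is}}{i},
\]
so
\[
t\,\partial_tF=F\cdot\sum_{d}\sum_{i\ge1}a(d)t^i d^{-is}.
\]
Comparing coefficients of $t^km^{-s}$ gives
\[
kB_k(m)=\sum_{i=1}^{k}\sum_{d^i\mid m}a(d)B_{k-i}(m/d^i).
\]
Terms with $i>k$ vanish because $B_j=0$ for $j<0$. If I want to avoid the formal logarithm, I will differentiate each factor directly: the $t$-derivative of $(1-td^{-s})^{-a(d)}$ equals the factor times $a(d)d^{-s}/(1-td^{-s})$, and expanding $1/(1-td^{-s})$ as a geometric series gives the same identity.

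The main point needing care is the formal legitimacy of the infinite product and its derivative, in particular the $d=1$ factor. The cleanest way to handle it is to work in the completed ring graded by $k$. In each $t$-degree, only finitely many Dirichlet terms of a given $m$ occur, and everything reduces to finite computations. The last sentence of the statement is then immediate: the recurrence determines each $B_k(m)$ by induction on $k$ from $B_0(m)=\mathbf1_{m=1}$. Its only inputs are $a(d)$, which is computed from partitions by \eqref{eq:abelian-partitions}, and the divisors of $m$.
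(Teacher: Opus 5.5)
Your proposal is correct and follows the paper's proof. Both use the multiset bijection from Corollary~\ref{thm:dihedral-subclass}, a product of per-type geometric series (which the paper groups by order as $\binom{a(d)+r-1}{r}$), and logarithmic differentiation in $t$ followed by comparison of coefficients; your check that each coefficient of $t^km^{-s}$ involves only finitely many factors makes explicit the formal legitimacy that the paper leaves implicit.
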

\begin{proof}
By Corollary~\ref{thm:dihedral-subclass}, the groups are classified
by their multisets of factors. For each odd $d$, there are $a(d)$
possible types of a factor
$\operatorname{Dih}(A)$ with $|A|=d$. A multiset containing $r$
of these factors has $\binom{a(d)+r-1}{r}$ possibilities.
Multiplying the resulting generating series gives
\eqref{eq:dihedral-gf}. Logarithmic differentiation with respect to
$t$ and comparison of coefficients gives
\eqref{eq:dihedral-recurrence}.
\end{proof}

For an explicit finite expression, put
$v_j(d)=a(d^{1/j})$ if $d$ is a perfect $j$th power, and put
$v_j(d)=0$ otherwise. If $*$ denotes Dirichlet convolution, then
Burnside's lemma for permuting $k$ factors gives
\[
 B_k=\sum_{c_1+2c_2+\cdots=k}
 \frac{v_1^{*c_1}*v_2^{*c_2}*\cdots}
 {\prod_{j\ge1}j^{c_j}c_j!}.
\]
In particular,
\[
 B_1=a,\qquad B_2=h,\qquad
 B_3=\frac{a*a*a+3a*v_2+2v_3}{6}.
\]
Here a zeroth convolution power is the function supported at $1$.
These identities explain the symmetry corrections in $h(m)$:
equal isomorphism types of factors must be counted as a multiset.

If $m$ is squarefree with $r$ prime divisors, its primes are
partitioned among the nontrivial factors. Consequently
\begin{equation}\label{eq:stirling-count}
 B_k(m)=\sum_{j=0}^{\min(k,r)}
       \left\{\begin{matrix}r\\j\end{matrix}\right\},
\end{equation}
where the braces are Stirling numbers of the second kind.
For $k\ge r$ this is the Bell number of $r$.
More generally, $B_k(m)$ stabilizes once $k\ge\Omega(m)$, because
each nontrivial odd factor consumes at least one prime factor,
counted with multiplicity. This is stabilization of the subclass
count $B_k(m)$, not of $f(2^k m)$.
For a fixed odd prime $p$ there is also the ordinary generating
function
\[
 \sum_{k,e\ge0}B_k(p^e)t^k x^e
       =\prod_{j\ge0}(1-tx^j)^{-\mathsf p(j)}.
\]

\begin{corl}[Bounds from direct products]\label{prop:arithmetic-bounds}
For $k\ge1$ and odd $m$, put $T_j(m)=B_j(m)-B_{j-1}(m)$. Then
\begin{equation}\label{eq:strengthened-bound}
 f(2^k m)\ \ge\
 L_k(m):=\sum_{j=0}^k f(2^{k-j})T_j(m)
 \ \ge\ B_k(m).
\end{equation}
In particular, for $m>1$,
$f(2^k m)\ge a(m)f(2^{k-1})$.
For every positive integer $n$, one also has
\begin{equation}\label{eq:general-dihedral-bound}
 f(2n)\ge a(n),\qquad
 f(2^k m)\ge\mathsf p(k-1)a(m).
\end{equation}
\end{corl}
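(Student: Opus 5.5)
The approach is to exhibit enough pairwise nonisomorphic SR direct products $P\times D$, with $P$ a $2$-group and $D$ in the dihedral-product subclass, to realize $L_k(m)$. Consider $P\times D$ with $P$ an SR group of order $2^{k-j}$ and $D$ a dihedral-product group of order $2^j m$ having no $C_2$ direct factor. Such a $D$ is $\prod_{i=1}^{j}\operatorname{Dih}(A_i)$ with every $A_i\ne1$, by the uniqueness in Corollary~\ref{thm:dihedral-subclass}. The groups of this kind are counted by $B_j(m)-B_{j-1}(m)=T_j(m)$: the map $D'\mapsto D'\times C_2$ sends the subclass at order $2^{j-1}m$ bijectively onto the members at order $2^jm$ that contain a trivial factor. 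Every $P\times D$ is SR by Proposition~\ref{prop:closure} and has order $2^km$. This yields $\sum_j f(2^{k-j})T_j(m)$ groups, provided they are pairwise nonisomorphic.

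The main step is the nonisomorphism argument, for which I would use Krull--Remak--Schmidt. Decompose $P$ into indecomposables. Each factor $\operatorname{Dih}(A_i)$ with $A_i\ne1$ is indecomposable and is not a $2$-group. Hence from the isomorphism type of $G=P\times D$ we recover two pieces of data. The first is the multiset of indecomposable non-$2$-group factors, which recovers $D$ and hence $j$. The second is the product of the remaining $2$-group factors, which recovers $P$ up to isomorphism.

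For this to work, an indecomposable factor of $P$ must never coincide with some $\operatorname{Dih}(A)$, $A\ne1$; this is clear since the one is a $2$-group and the other is not. A second worry is whether a direct factor of $G$ that is not a $2$-group could arise in some other way. This is not an issue, because KRS uniqueness applies to $G$ itself: its indecomposable factors are exactly those of $P$ together with the $\operatorname{Dih}(A_i)$.

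The second inequality, $L_k(m)\ge B_k(m)$, follows from $f(2^{k-j})\ge1$ by telescoping: $\sum_j T_j=B_k-B_{-1}=B_k$, and each $T_j\ge0$ since $B_{j-1}\le B_j$ by the injection just described. For $m>1$ we keep only the terms $j=0,1$. Here $T_0(m)=B_0(m)=0$ and $T_1(m)=B_1(m)=a(m)$, and all other terms are nonnegative, so $f(2^km)\ge f(2^{k-1})a(m)$.

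For (\ref{eq:general-dihedral-bound}), apply the previous bound to $f(2n)$: write $n=2^{k-1}m$ with $m$ odd, so $2n=2^km$. Counting the groups $P\times\operatorname{Dih}(A)$ with $|P|=2^{k-1}$ and $|A|=m$ gives $f(2n)\ge a(m)f(2^{k-1})$. It then remains to compare $f(2^{k-1})$ with $a(2^{k-1})=\mathsf p(k-1)$. An honest bound uses only $f(2^{k-1})\ge1$, so I expect the intended argument to use a different family. For $f(2n)\ge a(n)$, take $\operatorname{Dih}(A)$ with $A$ an arbitrary abelian group of order $n$; this is SR by Proposition~\ref{prop:generalized-dihedral-sr}.

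The obstacle is recovering $A$ from $\operatorname{Dih}(A)$ when $A$ has even order. I would argue that for $A$ not elementary abelian, $A$ is characteristic. The elements outside $A$ all square to $1$ and conjugate $A$ by inversion. Then, except in the degenerate case where $\operatorname{Dih}(A)$ is an elementary abelian $2$-group, any isomorphism carries $A$ to an abelian subgroup of index two containing all elements of order greater than $2$. Such a subgroup is uniquely determined up to isomorphism, which handles the degenerate and small ($D_8$-type) cases by hand.

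The bound $f(2^km)\ge\mathsf p(k-1)a(m)$ then follows from the groups $\operatorname{Dih}(A\times B)$ with $|A|=2^{k-1}$ and $|B|=m$. These realize all $a(2^{k-1}m)=\mathsf p(k-1)a(m)$ abelian types, and distinctness follows from the same recovery of the abelian index-two subgroup.
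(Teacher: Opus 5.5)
Your proof of \eqref{eq:strengthened-bound} and of $f(2^km)\ge a(m)f(2^{k-1})$ is the paper's argument. You form the products $Q\times\prod_{i=1}^j\operatorname{Dih}(A_i)$ with nontrivial odd $A_i$, count them by $T_j(m)$ via removal of a trivial factor, and separate them by Remak--Krull--Schmidt. You then use $f(2^{k-j})\ge1$ and $\sum_jT_j(m)=B_k(m)$, and for the particular case you keep the $j=1$ term.

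For \eqref{eq:general-dihedral-bound} you reach the paper's family $\operatorname{Dih}(A)$ with $A$ abelian of order $n$. However, the step that carries the weight is asserted rather than proved: that $\operatorname{Dih}(A)\cong\operatorname{Dih}(A')$ forces $A\cong A'$. Your sentence ``such a subgroup is uniquely determined up to isomorphism'' is exactly the claim to be shown. The hand check of ``small ($D_8$-type) cases'' is never specified.

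The missing line is short. Suppose $A$ contains an element $x$ of order greater than two, and let $y\in A$ satisfy $y^2=1$. Then $(xy)^2=x^2\ne1$ and $y=x^{-1}(xy)$. Hence $A$ is generated by its elements of order greater than two. Since everything outside $A$ is an involution, these are exactly the elements of order greater than two in $\operatorname{Dih}(A)$. So $A$ is characteristic and is the unique index-two subgroup containing them. Consequently every isomorphism $\operatorname{Dih}(A)\to\operatorname{Dih}(A')$ maps $A$ onto $A'$.

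The only remaining case is $A$ of exponent at most two. This is precisely when $\operatorname{Dih}(A)$ is abelian, so it is separated from all other cases automatically. No separate $D_8$ case arises.

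The paper avoids characteristic subgroups altogether. Since all elements outside $A$ are involutions, $|\operatorname{Dih}(A)|$ together with the numbers of elements of each order greater than two recovers all element-order counts of $A$. For an abelian $p$-group with cyclic exponents $e_i$, the counts $p^{\sum_i\min(j,e_i)}$ of elements killed by $p^j$ recover the partition $(e_i)$.

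That argument is uniform, with no case split. It is also exactly what would justify your ``determined up to isomorphism'' wording, because any abelian index-two subgroup containing all elements of order greater than two has the same element-order statistics as $A$. Your generation argument gives the stronger conclusion that $A$ is characteristic whenever $\operatorname{Dih}(A)$ is nonabelian, at the cost of the exponent-two case distinction. Either way, specializing to $n=2^{k-1}m$ with $a(2^{k-1}m)=\mathsf p(k-1)a(m)$ gives the last inequality, as you say.
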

\begin{proof}
The number $T_j(m)$ counts multisets of exactly $j$ nontrivial
odd abelian groups: removing one trivial factor bijects the other
multisets with those counted by $B_{j-1}(m)$. Form the products
$Q\times\prod_{i=1}^j\operatorname{Dih}(A_i)$, where $Q$ is
any SR-group of order $2^{k-j}$. Their isomorphism types are distinct
by Remak--Krull--Schmidt: the mixed-order dihedral factors cannot be
absorbed into a $2$-group. This proves the first bound; taking only
elementary abelian $Q$ gives the second.

For the last bound, apply
Proposition~\ref{prop:generalized-dihedral-sr}.
The isomorphism type of $\operatorname{Dih}(A)$ determines $A$ up to
isomorphism, even when $A$ is not characteristic: all elements outside
$A$ are involutions, so the group order and the counts of elements
of orders greater than two recover all element-order counts of $A$.
For a finite abelian $p$-group with cyclic exponents $e_i$, the number
of elements killed by $p^j$ is $p^{\sum_i\min(j,e_i)}$; these numbers
recover its partition $(e_i)$. Hence different abelian groups give
different generalized dihedral groups, proving $f(2n)\ge a(n)$.
\end{proof}

Applying these formulas to Table~\ref{tab:all-counts}, the subclass
counted by $B_k(m)$ contains 1880 nontrivial groups of order at most
2000, and the subclass counted by $L_k(m)$ contains 4292. Thus
$7889-4292=3597$ types lie outside the latter product class.
The programs \path{abelian_hall_counts.py} and
\path{counting_structures.py} evaluate these formulas; their inputs
and verification records are described in
Appendix~\ref{app:implementation}.

\subsection{Directly indecomposable groups and an exact reconstruction}
Let $d(n)$ denote the number of nontrivial directly indecomposable
SR-groups of order $n$, and put $d(1)=0$.
It is essential to distinguish $d(n)$ from $c(n)$: a group with no
$C_2$ direct factor can still decompose, as $D_8\times D_8$ does.

\begin{prop}[Formal reconstruction]\label{prop:dirichlet-euler}
The counting functions satisfy the coefficientwise identity
\begin{equation}\label{eq:full-euler}
 \sum_{n\ge1}f(n)n^{-s}
      =\prod_{n\ge2}(1-n^{-s})^{-d(n)}.
\end{equation}
Equivalently,
\[
 f(n)=\sum_{\prod_{r=2}^{n}r^{u_r}=n}
          \prod_{r=2}^{n}\binom{d(r)+u_r-1}{u_r}.
\]
For $d(r)=0$, the factor is understood to be $1$ for $u_r=0$
and $0$ otherwise. The values $d(n)$ are determined successively
by the values $f(n)$.
\end{prop}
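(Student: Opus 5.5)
The plan is to use the Krull--Remak--Schmidt theorem together with the closure properties of Proposition~\ref{prop:closure}. Every nontrivial finite group is a direct product of nontrivial directly indecomposable factors. By Krull--Remak--Schmidt, the multiset of isomorphism types of these factors is uniquely determined by the group. For SR-groups we need both directions: a direct product of SR-groups is SR, and each direct factor of an SR-group is SR (Proposition~\ref{prop:closure}, applied inductively to $A\times B$). Hence the map sending an SR-group of order $n$ to the multiset of its indecomposable factors is a bijection. Its image is the set of finite multisets of nontrivial directly indecomposable SR-groups whose orders multiply to $n$. The trivial group corresponds to the empty multiset, consistent with $f(1)=1$ and $d(1)=0$.

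Next comes the counting. Fix a factorization type, that is, exponents $u_r\ge0$ with $\prod_r r^{u_r}=n$, where $u_r$ records how many factors have order $r$. The number of multisets of $u_r$ elements drawn from the $d(r)$ indecomposable types of order $r$ is $\binom{d(r)+u_r-1}{u_r}$. This equals $1$ when $u_r=0$, and equals $0$ when $d(r)=0<u_r$, which matches the stated convention. Choices at different orders $r$ are independent, so multiplying over $r$ and summing over factorization types gives the explicit formula. Only $r\le n$ can occur, since each factor with $u_r>0$ has $r\mid n$.

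The formal identity \eqref{eq:full-euler} is the same statement in Dirichlet-series form. Expand each factor as $(1-r^{-s})^{-d(r)}=\sum_{u\ge0}\binom{d(r)+u-1}{u}r^{-us}$. Only finitely many factors contribute to the coefficient of any given $n^{-s}$, namely $r\le n$, so the infinite product is coefficientwise well defined. Reading off the coefficient of $n^{-s}$ gives exactly the sum above.

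For the final claim, I would argue by induction on $n$. In the formula for $f(n)$, the single factorization type with $u_n=1$ and all other $u_r=0$ contributes $d(n)$. Every other factorization type involves only orders $r<n$ with $r\mid n$. Hence $d(n)=f(n)-(\text{a polynomial expression in } d(r),\ r<n)$, and induction determines every $d(n)$ from the values of $f$.

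The only step needing care is the bijection. In particular, we need that a direct factor of an SR-group is SR, which is precisely the converse clause of Proposition~\ref{prop:closure}. We also need that the multiset of indecomposable factors is an isomorphism invariant, which is Krull--Remak--Schmidt. The rest is routine generating-function bookkeeping.
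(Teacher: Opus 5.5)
Your proposal is correct and follows the paper's proof essentially step for step. Closure of the SR property under direct products and direct factors, together with Krull--Remak--Schmidt, gives the bijection with multisets of indecomposable factors; the multiset count $\binom{d(r)+u_r-1}{u_r}$ gives the explicit formula; and isolating the unique term with $u_n=1$ gives the triangular recovery of $d(n)$, while your remarks on coefficientwise well-definedness and the binomial convention only make explicit what the paper leaves implicit.
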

\begin{proof}
Direct factors and products preserve the SR conditions by
Proposition~\ref{prop:closure}. The Remak--Krull--Schmidt theorem
identifies every group with one unique
multiset of nontrivial directly indecomposable factors. There are
$\binom{d(r)+u_r-1}{u_r}$ multisets of $u_r$ factors of order $r$.
This proves the formulas. At order $n$, the only contribution using
a factor of order $n$ is $d(n)$ itself; all other contributions use
proper divisors of $n$. This proves triangular recoverability.
\end{proof}

Formula~\eqref{eq:full-euler} is an exact reconstruction identity.
It is not a closed solution of the classification problem, since
the numbers $d(n)$ are themselves unknown outside the classified
range. In contrast, every exponent $a(d)$ in
\eqref{eq:dihedral-gf} is explicitly known.

For $2$-groups, put $F_k=f(2^k)$, $D_k=d(2^k)$, $F_0=1$.
Then
\[
 \sum_{k\ge0}F_kx^k=\prod_{j\ge1}(1-x^j)^{-D_j}.
\]
An effective inversion is
\[
 b_k=kF_k-\sum_{i=1}^{k-1}b_iF_{k-i},\qquad
 D_k=\frac1k\sum_{r\mid k}\mu(k/r)b_r,
\]
where $\mu$ is the arithmetic M\"obius function. Indeed,
$xF'(x)/F(x)=\sum_{k\ge1}b_kx^k$ and
$b_k=\sum_{r\mid k}rD_r$.

\begin{center}\small
\begin{tabular}{c|rrrrrrrrrr}
\toprule
$k$&1&2&3&4&5&6&7&8&9&10\\\midrule
$f(2^k)$&1&1&3&5&12&25&54&130&317&803\\
$d(2^k)$&1&0&2&2&7&10&25&59&149&382\\
\bottomrule
\end{tabular}
\end{center}
Applying the same inversion at all orders through 2000 gives 2806
nontrivial directly indecomposable types and 5083 decomposable types.
The program \path{counting_structures.py} reconstructs every entry
of Table~\ref{tab:all-counts} from these counts and independently
checks the formal logarithm; see Appendix~\ref{app:implementation}.

There is a parallel calculation within groups of nilpotency class
at most two, because this class is closed under direct products and
direct factors. Its directly indecomposable counts at orders
$2^k$, $k=1,\ldots,10$, are
\[
 1,0,2,0,5,0,10,12,49,91.
\]
Thus the 126 special class-two groups of order 1024 include exactly
91 directly indecomposable ones. The 35 decomposable special groups
are accounted for by exponent partitions $10=3+7=5+5$:
\[
 2\cdot10+\binom{5+1}{2}=35.
\]
There is no special factor of order $2^4$, and a three-factor
partition of 10 with each part at least three would require such
a part. Consequently the order-1024 census decomposes as follows.
\begin{center}
\begin{tabular}{lrrr}
\toprule
Class&with $C_2$ factor&decomposable, no $C_2$&indecomposable\\\midrule
Abelian&1&0&0\\
Class two&95&35&91\\
Class at least three&221&69&291\\\midrule
Total&317&104&382\\\bottomrule
\end{tabular}
\end{center}
In particular, the $803-317=486$ groups without a $C_2$ direct
factor consist of 382 directly indecomposable and 104 decomposable
types. Among the 126 special class-two groups, exactly 91 are
directly indecomposable.

\subsection{The correction at four times an odd integer is infinite}
The correction in Corollary~\ref{cor:four-odd-census} has infinite
support. We use the affine and Heisenberg constructions of
Theorems~\ref{thm:arithmetic-norm-circle}
and~\ref{thm:arithmetic-heisenberg-circle}, proved in
Appendix~\ref{app:arithmetic-families}. Related constructions appear
in \cite[Chapter~XXVI, pp.~453--454]{BKZ19}.

Let $q$ be an odd prime power, $K=\mathbb F_{q^2}$, and
$U=\{u\in K^\times:u^{q+1}=1\}$. Multiplication by $U$ and
Frobenius $\sigma(x)=x^q$ generate
$H_q=U\rtimes\langle\sigma\rangle\cong D_{2(q+1)}$.
The affine group $G_q=K^+\rtimes H_q$ is SR and has order
$2q^2(q+1)$. There is also an SR group
$\widetilde G_q=P_q\rtimes H_q$ of order $2q^3(q+1)$, where
\[
 P_q=\mathbb F_q^2\times\mathbb F_q,\qquad
 (v,z)(w,t)=(v+w,z+t+\tfrac12\omega(v,w))
\]
for a nondegenerate alternating form $\omega$, and
$h(v,z)=(hv,\det(h)z)$. These actions determine the semidirect products uniquely.

\begin{corl}[Infinite support]\label{cor:infinite-correction-main}
For every odd prime power $q\equiv1\pmod4$,
\[
 e\!\left(\frac{q^2(q+1)}2\right)\ge1,
 \qquad e\!\left(\frac{q^3(q+1)}2\right)\ge1.
\]
In particular, $e(m)$ has infinite support.
\end{corl}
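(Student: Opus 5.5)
We will exhibit, for each prime power $q\equiv1\pmod4$, explicit SR-groups of orders $4\cdot q^2(q+1)/2$ and $4\cdot q^3(q+1)/2$ whose normal Hall subgroup of odd order is nonabelian. The candidates are the groups $G_q$ and $\widetilde G_q$ above, of orders $2q^2(q+1)$ and $2q^3(q+1)$. We take as given, from Theorems~\ref{thm:arithmetic-norm-circle} and~\ref{thm:arithmetic-heisenberg-circle} in Appendix~\ref{app:arithmetic-families}, that both groups are SR. The remaining work is arithmetic and structural.

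\emph{Orders.} Since $q\equiv1\pmod4$, we have $q+1\equiv2\pmod4$, so $v_2(q+1)=1$. Hence
\[
|G_q|=2q^2(q+1)=4\cdot\tfrac{q^2(q+1)}2
\qquad\text{and}\qquad
|\widetilde G_q|=4\cdot\tfrac{q^3(q+1)}2,
\]
and in both cases the displayed argument of $e$ is odd. Thus Theorem~\ref{thm:four-odd} applies to both groups. Each therefore has a unique normal Hall subgroup $M$ of odd order with quotient $C_2^2$. The odd cofactor $(q+1)/2$ will also matter below.

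\emph{Identifying $M$ in $G_q$.} The cyclic group $U$ has order $q+1$. Let $U_{\rm odd}$ be its subgroup of odd order $(q+1)/2$. Then $M=K^+\rtimes U_{\rm odd}$, because this subgroup has the correct odd order and Theorem~\ref{thm:four-odd} gives uniqueness. It remains to show that $M$ is nonabelian, which requires $U_{\rm odd}$ to act nontrivially on $K^+$ by multiplication. Multiplication by a nonidentity $u\in K^\times$ is a nontrivial linear map, so the action is nontrivial as soon as $U_{\rm odd}\ne1$, that is, as soon as $(q+1)/2>1$. This holds for all $q\ge5$, and $q\equiv1\pmod4$ forces $q\ge5$.

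\emph{Identifying $M$ in $\widetilde G_q$.} Here the odd Hall subgroup is $M=P_q\rtimes U_{\rm odd}$. It is nonabelian for a simpler reason: $P_q$ is a Heisenberg group and is already nonabelian, since $\omega$ is nondegenerate and $q$ is odd. No condition on $(q+1)/2$ is needed in this case.

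\emph{Conclusion.} In both cases $M$ is nonabelian. Therefore these groups are counted by $e$ and not by $h$, which gives $e\ge1$ at the two stated arguments. Distinct $q$ give distinct orders, and there are infinitely many prime powers $q\equiv1\pmod4$ (for example, all primes $q\equiv1\pmod4$). Hence the support of $e$ is infinite.

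The only potential obstacle is verifying that $(q+1)/2$ is odd, so that the orders have the required form $4m$ with $m$ odd; the hypothesis $q\equiv1\pmod4$ is exactly what guarantees this. The SR property itself is deferred to the appendix theorems.
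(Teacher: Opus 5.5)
Your proposal is correct and follows the paper's proof essentially step for step. You identify the odd Hall subgroups as $K^+\rtimes U_o$ and $P_q\rtimes U_o$, where $|U_o|=(q+1)/2$ is odd. The first is nonabelian because $U_o\ne1$ acts nontrivially by multiplication, and the second because $P_q$ is nonabelian; the SR property comes from the two appendix theorems. The only cosmetic difference is that the paper takes $q=5^j$ to obtain infinitely many arguments, which avoids even the classical fact that there are infinitely many primes $q\equiv1\pmod4$.
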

\begin{proof}
The unique odd-order subgroup $U_o$ of $U$ has order $(q+1)/2$.
The normal odd Hall subgroups of the two constructions are
$K^+\rtimes U_o$ and $P_q\rtimes U_o$. They are nonabelian:
the first action is nontrivial for $q\ge5$, and $P_q$ itself
is nonabelian. Take $q=5^j$, $j\ge1$, for infinitely many distinct
arguments of $e$.
\end{proof}

The affine family at $q=5,9$ gives the exceptions of orders 300
and 1620, while the Heisenberg family at $q=5$ gives order 1500.
Corollary~\ref{cor:four-odd-census} gives a unique nonabelian-Hall
SR type at each of these orders, so the constructions identify the
three exceptional types listed there. Outside the census,
for example, they prove $f(4732)\ge6$ and $f(10404)\ge14$;
they do not prove equality at these orders.

\subsection{Consequences for further classification}
The exact progressions determine $f(n)$ on infinite sets of orders.
The coefficients $B_k(m)$ and $L_k(m)$ count known direct-product
contributions, while \eqref{eq:full-euler} extracts the number of
directly indecomposable construction targets from a completed census.

The count is not determined solely by $v_2(n)$: $f(6)=1$ and
$f(18)=2$. Even with fixed $v_2(n)$ it need not be multiplicative in
the odd part, since $f(12)=f(20)=1$ but $f(60)=2$.
Nor does the prime-exponent pattern determine the unrestricted count:
$300=4\cdot3\cdot5^2$ and $588=4\cdot3\cdot7^2$ have counts
6 and 5. The additional types depend on the available nontrivial
actions between Sylow subgroups, as the norm-circle constructions
demonstrate.

At order
$4m$, a nonzero correction can occur only when $m$ is neither
squarefree nor a nilpotent number, and the odd Hall subgroup must be
non-nilpotent. These are necessary restrictions, not sufficient
conditions for $e(m)>0$.
An explicit formula for the unrestricted correction $e(m)$, or for
the sequence $f(2^k)$, remains open.

\clearpage
\appendix
\section{The complete count table}\label{sec:counttable}
Table~\ref{tab:all-counts} records every even order from $2$ to $2000$;
odd orders greater than one have count zero. Its entries sum to $7889$.
The accompanying GitHub repository supplies the individual group
representatives underlying these counts. Reconstruction and verification
procedures are described in Appendix~\ref{app:implementation}.

{\small\setlength{\tabcolsep}{5pt}
\begin{longtable}{*{5}{rr}}
\caption{The number $f(n)$ of SR-groups of each even order $n\le2000$.}\label{tab:all-counts}\\
\toprule
$n$&$f(n)$&$n$&$f(n)$&$n$&$f(n)$&$n$&$f(n)$&$n$&$f(n)$\\\midrule
\endfirsthead
\multicolumn{10}{c}{Counts by even order, continued}\\\toprule
$n$&$f(n)$&$n$&$f(n)$&$n$&$f(n)$&$n$&$f(n)$&$n$&$f(n)$\\\midrule
\endhead
\midrule\multicolumn{10}{r}{Continued on next page}\\\endfoot
\bottomrule\endlastfoot
\input{f_table_body.tex}
\end{longtable}}

\clearpage
\section{Pseudocode for the census algorithms}\label{app:algorithms}
The following algorithms describe the mathematical operations of
the supplied GAP programs. They omit formatting and resource
management. All counts and presentation codes use exact integers.

\subsection{The exact SR filter}
This is the filter used by \path{sr_search2_2000.g},
\path{sr_search512.g}, and \path{sr_search1536.g}.
\par\noindent\begin{minipage}{\linewidth}
\begin{algorithm}[The exact SR filter]\label{alg:srtest}
\leavevmode\par
\begin{lstlisting}
Algorithm SR-Test(G)
Input: a finite group G.
Output: true exactly when G is simply reducible.
1. C := the conjugacy classes of G.
2. For each K in C:
       choose g in K;
       if g^(-1) is not in K, return false.
3. Initialise a[x] := 0 for every x in G.
4. For each y in G: increase a[y^2] by 1.
5. S3 := sum over x in G of a[x]^3.
6. T  := sum over K in C of |K| * (|G|/|K|)^2.
7. Return (S3 = T).
\end{lstlisting}
\end{algorithm}
\end{minipage}\par\medskip
By Corollary~\ref{cor:ambivalence-moment}, step 2 may equivalently be replaced
by the test $\sum_x a[x]^2=|G|\,|\mathcal C|$ after step 4.
The programs \path{scriptG.g} and \path{makeCSV.g} use
this equivalent form, preceded by the necessary condition that
$G/G'$ be elementary abelian. Neither implementation requires
floating-point character calculations.

\subsection{Catalogue traversal}
The separate large-order runs change only the set of orders.
\par\noindent\begin{minipage}{\linewidth}
\begin{algorithm}[Catalogue traversal]\label{alg:catalogue}
\leavevmode\par
\begin{lstlisting}
Algorithm Catalogue-Census(I)
Input: a set I of catalogue orders at most 2000,
       with 1024 excluded.
Output: one row for every SR isomorphism type at those orders.
1. For each even n in I:
2.     N := NumberSmallGroups(n).
3.     For i := 1,...,N:
4.         G := SmallGroup(n,i).
5.         If SR-Test(G):
6.             write (n,i,StructureDescription(G),|Z(G)|).
7. When partial runs are combined, merge by the key (n,i).
\end{lstlisting}
\end{algorithm}
\end{minipage}\par\medskip
For \path{sr_search2_2000.g}, take
$I=\{2,\ldots,2000\}\setminus\{512,1024,1536\}$.
For the two separate runs, take $I=\{512\}$ and $I=\{1536\}$.
The data files contain accepted groups only; completion of a
catalogue traversal is part of the computational record.

\subsection{Canonical-parent generation at order 1024}
This describes \path{scriptG.g}. The inputs at orders at most
$256$ are recomputed by catalogue traversal; the order-$512$
input is read from \path{SR_groups_results512.csv}.
\par\noindent\begin{minipage}{\linewidth}
\begin{algorithm}[Canonical descendants]\label{alg:descendants}
\leavevmode\par
\begin{lstlisting}
Algorithm Descendant-Census-1024
Input: complete SR lists for orders 8,16,...,512.
Output: one SRHIT record for each SR group of order 1024
        and nilpotency class at least three;
        a completion record for every parent-step job.
1. J := {(k,i,10-k) : 3 <= k <= 9,
         SmallGroup(2^k,i) is an SR group that is not abelian}.
2. Read completed job keys from the existing checkpoint.
3. For every unfinished (k,i,s) in J:
4.     P := SmallGroup(2^k,i).
5.     D := all immediate descendants of P of step size s,
           one representative of each isomorphism type.
6.     accepted := 0.
7.     For each H in D:
8.         If |H| = 1024 and SR-Test(H):
9.             write SRHIT(k,i,s,CodePcGroup(H));
10.            increase accepted by 1.
11.    Write DONE(k,i,s,accepted,|D|).
12. Check that the DONE job keys are exactly J.
13. Check each DONE accepted count against its SRHIT records.
14. Check that every SRHIT belongs to a completed job.
\end{lstlisting}
\end{algorithm}
\end{minipage}\par\medskip
Here step 5 is performed by \texttt{PqDescendants} with
\texttt{StepSize:=s} and the default bound of one additional
$2$-class. All descendants, including terminal ones, are retained.
If the initial presentation causes a recoverable call failure,
the implementation retries with a standard presentation of the
same parent. A failed job is not marked complete.

The validated checkpoint passes all three completion checks and contains no repeated job or positive record.

\subsection{Export of canonical-descendant representatives}
This is the mathematical content of \path{makeCSV.g}.
\par\noindent\begin{minipage}{\linewidth}
\begin{algorithm}[Export of canonical descendants]\label{alg:exportdescendants}
\leavevmode\par
\begin{lstlisting}
Algorithm Export-Class3plus(checkpoint)
Input: the validated completed descendant checkpoint.
Output: a CSV of reconstructible representatives and invariants.
1. For each accepted record (k,i,s,a):
2.     G := PcGroupCode(a,1024).
3.     Assert |G| = 1024 and NilpotencyClass(G) >= 3.
4.     Assert SR-Test(G).
5.     flag := Omega_1(Z(G)) is not contained in Phi(G).
6.     Write the parent (2^k,i), step s, nilpotency class,
       exponent, centre order, derived length, number of
       conjugacy classes, flag, and the exact integer a.
7. Check the final row count and the per-step totals.
\end{lstlisting}
\end{algorithm}
\end{minipage}\par\medskip
For a finite $2$-group, the flag in step 5 is true exactly when
a central involution lies outside the Frattini subgroup, which
is equivalent to having a direct factor isomorphic to $C_2$.

\section{Algorithms for the class-two classification}\label{app:class2-algorithms}
All vector spaces and matrices in this appendix are over $\mathbb F_2$.
The data convention is a list of independent alternating matrices
$B_1,\ldots,B_r$ and a matrix $Q$ with $i$th row $q(e_i)$; thus
\[
 q(v)_a=\sum_i v_iQ_{ia}+\sum_{i<j}v_iv_j(B_a)_{ij}.
\]
The affine equations below must be solved before their solutions are
identified under a stabilizer. Counting solutions alone does not count groups.

\par\noindent\begin{minipage}{\linewidth}
\begin{algorithm}[Polar filter and all SR refinements]\label{alg:class2-filter}
\leavevmode\par
\textbf{Input:} independent alternating matrices $B_1,\ldots,B_r$ on
$V=\mathbb F_2^m$. \textbf{Output:} all SR quadratic refinements of
this polar tensor, or an empty set.
\begin{enumerate}
\item For each $\lambda\in\mathbb F_2^r$, compute
$B_\lambda=\sum_a\lambda_aB_a$, $R_\lambda=\ker B_\lambda$, and
$\rho_\lambda=\rank B_\lambda$.
\item For each two-dimensional subspace
$\langle\lambda,\mu\rangle\le\mathbb F_2^r$, reject if
$\rho_\lambda+\rho_\mu+\rho_{\lambda+\mu}
\ne2(m-\dim(R_\lambda\cap R_\mu))$.
\item Let $q_0$ have the prescribed polar matrices and zero basis
diagonal. For every nonzero $\lambda$ and every basis vector
$v$ of $R_\lambda$, impose
$\sum_{i,a}v_i\lambda_a Q_{ia}=\lambda(q_0(v))$.
\item Solve this affine linear system exactly. If inconsistent, return
an empty set; otherwise return its complete affine solution space.
For special targets also require $\bigcap_\lambda R_\lambda=0$.
\end{enumerate}
\end{algorithm}
\end{minipage}\par\medskip
This is the implementation of Theorem~\ref{thm:class2-SR-new} in
\path{sr_class2.g}; it applies to the non-special quotient inputs too,
provided the last, special-target condition is omitted.

\par\noindent\begin{minipage}{\linewidth}
\begin{algorithm}[Five-dimensional exclusion]\label{alg:small-exclusion}
\leavevmode\par
\textbf{Input:} the quotient
$E=\Alt(\mathbb F_2^5)/\langle e_{12}\rangle$ of dimension nine.
\textbf{Output:} an exhaustive test of the possible $(5,5)$ spaces.
\begin{enumerate}
\item Enumerate four-dimensional subspaces of $E$ by their unique
reduced row echelon matrices. Lift each growing row space and adjoin
$e_{12}$.
\item After adjoining a row, test all newly determined triples by
Algorithm~\ref{alg:class2-filter}, step~2. On a failure, reject this
entire branch. Count its completions from the unassigned echelon
entries, so rejected and retained branches partition all
$\binom94_2=3309747$ subspaces.
\item For each completed surviving space, solve the reality equations
in steps~3--4. Independently check whether its rank-two forms contain
a full-star hyperplane, to which Lemma~\ref{lem:star-extension} applies.
\item Check the partition sum, retain the $84$ multiplicity-free
survivors as a certificate, and check that all $84$ reality systems are
inconsistent. Return zero SR spaces.
\end{enumerate}
\end{algorithm}
\end{minipage}\par\medskip
The source is \path{mf_small.cpp}; the retained spaces are checked
independently by \path{review_small_forms.py}.

\par\noindent\begin{minipage}{\linewidth}
\begin{algorithm}[The last pencil]\label{alg:last-pencil}
\leavevmode\par
\textbf{Input:} the explicit pencil $K\perp K\perp H_{w_1}$ in the
proof of Theorem~\ref{thm:pencil-eighteen-new}.
\textbf{Output:} five inequivalent quadratic representatives.
\begin{enumerate}
\item Use Algorithm~\ref{alg:class2-filter} to construct the entire
affine set $\mathcal Q$ of $256$ refinements.
\item Construct the $15$ linear transformation pairs listed below.
For every pair $(T,C)$ check that both matrices are invertible and
$\sum_b C_{ba}TB_bT^{\mathsf T}=B_a$ for $a=1,2$.
\item Traverse the orbits of $\mathcal Q$ under
$q(v)\mapsto q(vT)C$, retaining one representative per orbit.
\item Check that the orbit sizes are $48,48,48,96,16$ and sum to
$256$. For each representative compute the pair
$(q|_X=0,\,4|q^{-1}(0)|-1)$, where $X$ is given in
\eqref{eq:lastpencil-X-new}. Check that these five pairs are distinct.
\end{enumerate}
\end{algorithm}
\end{minipage}\par\medskip
For precision, use the ordered basis $a_1,b_1,c_1,a_2,b_2,c_2,u,v$;
$u,v$ are the regular-block basis vectors denoted $p,q$ in the proof.
In the following table, an assignment specifies the image of a basis
vector, and unlisted vectors are fixed. All target matrices $C$ are
identity except in the last row.
\begin{center}\small
\begin{tabular}{@{}p{.83\linewidth}r@{}}\toprule
Transformations & Number\\\midrule
Interchange $(a_1,b_1,c_1)$ and $(a_2,b_2,c_2)$ &1\\
$a_1\mapsto a_1+a_2$, $b_2\mapsto b_2+b_1$, $c_2\mapsto c_2+c_1$ &1\\
For each $i=1,2$: $a_i\mapsto a_i+b_i$, or $a_i\mapsto a_i+c_i$ &4\\
$a_1\mapsto a_1+b_2$, $a_2\mapsto a_2+b_1$; and the analogous pair with $c$ &2\\
Interchange $u,v$; or $v\mapsto v+u$ &2\\
For each $i$: $(a_i,v)\mapsto(a_i+u,v+b_i)$; or $(a_i,u)\mapsto(a_i+v,u+b_i)$ &4\\
$c_i\mapsto c_i+b_i$ for both $i$; $C=\left(\begin{smallmatrix}1&0\\1&1\end{smallmatrix}\right)$ &1\\\bottomrule
\end{tabular}
\end{center}
The source \path{classify_last_pencil.g} checks these identities
and writes \path{last_pencil_orbits.csv}. The intrinsic-invariant
check proves that no further tensor automorphism merges its five orbits.

\par\noindent\begin{minipage}{\linewidth}
\begin{algorithm}[Quotient extensions and quadratic orbits]\label{alg:quotient-orbits}
\leavevmode\par
\textbf{Input:} $(m,r)=(6,4)$ or $(7,3)$ and the complete order-512 SR
catalogue. \textbf{Output:} one quadratic representative of every special
SR-group of order $1024$ with these dimensions.
\begin{enumerate}
\item Retain all class-two groups $P$ in the input with
$\dim(P/P')=m$. Include non-special groups. Extract their commutator
spaces $\mathcal N_P$ in $\Alt(\mathbb F_2^m)$ and remove identical
coordinate spaces by reduced row echelon form.
\item For each retained $\mathcal N_P$, visit every nonzero vector
of $\Alt(V)/\mathcal N_P$. For each extension
$\mathcal N=\mathcal N_P+\langle B\rangle$, require zero common
radical and a nonempty result from Algorithm~\ref{alg:class2-filter}.
Record all successful extensions and the exact per-parent counts.
\item Reduce the successful extensions by a verified subgroup of the
parent stabilizer. Check every generator for invertibility and
preservation of $\mathcal N_P$, and check that its orbit sizes sum
to the successful-extension count for that parent.
\item Construct the coloured polar graph of
Proposition~\ref{prop:polar-graph} for each retained extension. Use
exact graph canonical forms to retain one representative per global
polar orbit, including comparisons between different parents.
\item For each polar representative, obtain the full graph
automorphism group. Verify its point actions $(A,D)$ for linearity
and the polar identity. Solve the reality equations again and take
orbits under $q\mapsto D^{-1}\circ q\circ A$ on the entire solution space.
\item Check that the refinement-orbit sizes exhaust that space.
Retain one representative per orbit and verify their inequivalence
using canonical forms of the augmented quadratic graphs.
\end{enumerate}
\end{algorithm}
\end{minipage}\par\medskip
Theorem~\ref{thm:quotient-polar-complete} proves coverage, while
Proposition~\ref{prop:polar-graph} proves exact isomorphism control.
The implementations are \path{extend_parents.cpp},
\path{parent_subgroup_orbits.cpp}, and
\path{canonical_class2.py}. The first is an exhaustive finite loop,
and the latter steps reduce its output without discarding any orbit.

\subsection{Exporting the class-two representatives of order 1024}
\label{app:export-class2}
The following procedure constructs and checks every retained representative.

\par\noindent\begin{minipage}{\linewidth}
\begin{algorithm}[Exporting the class-two catalogue]\label{alg:class2-export}
\leavevmode\par
\noindent\textbf{Input:} the complete order-512 SR census $\mathcal C_{512}$
and one quadratic-map representative $(\beta,q)$ for each of the 126
special isomorphism types.\\
\textbf{Output:} \path{sr1024_class2.csv}, with a reconstructible pc
presentation and the stated group invariants for every class-two type.
\begin{enumerate}
\item Form the list $\mathcal P$ of class-two groups in
$\mathcal C_{512}$; check that $|\mathcal P|=95$.
\item Construct $K\times C_2$ for each $K\in\mathcal P$ and construct
$G(\beta,q)$ for each of the 126 supplied special representatives.
Retain the construction source with each group.
\item For every constructed group $G$:
  \begin{enumerate}
  \item verify $|G|=1024$, class two, the special/direct-factor flag,
  and the exact SR criterion on its extracted quadratic data;
  \item count squares of actual group elements and verify the two
  Wigner identities;
  \item compute the exponent, center and derived subgroup orders,
  conjugacy-class and involution counts, and dimensions $(m,r)$;
  \item set the parent to $C_2^m$ and the step to $r$; encode a pc
  presentation, reconstruct it, and check the reconstructed group.
  \end{enumerate}
\item Check that there are 95 non-special groups and 126 special
  groups, with the special split $28+80+18$.
\item Write all 221 CSV rows and a completion record after every check
  succeeds.
\end{enumerate}
\end{algorithm}
\end{minipage}\par\medskip
This export uses previously classified orbit representatives.  It does
not perform a new orbit enumeration or infer isomorphism from the
exported invariants.

\clearpage
\section{Character proofs for the two exceptional families}\label{app:arithmetic-families}
This appendix proves the two constructions used in
Corollary~\ref{cor:infinite-correction-main}. Related affine and
extraspecial examples appear in \cite[Chapter~XXVI, pp.~453--454]{BKZ19};
the explicit actions below fix the groups for every odd prime power $q$.

\begin{thm}[The affine norm-circle family]\label{thm:arithmetic-norm-circle}
Let $q$ be an odd prime power, let $K=\mathbb F_{q^2}$, and put
$U=\{u\in K^\times:u^{q+1}=1\}$.
Let $H=U\rtimes\langle\sigma\rangle$ act on $V=K^+$ by
$u(x)=ux$ and $\sigma(x)=x^q$.
Then $G=V\rtimes H$, of order $2q^2(q+1)$, is simply reducible.
\end{thm}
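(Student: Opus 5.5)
We will compute the full character table of $G=V\rtimes H$ by Clifford theory and then check the two conditions of Definition~\ref{def:SR} directly. The group $H\cong D_{2(q+1)}$ is dihedral, hence SR by Proposition~\ref{prop:generalized-dihedral-sr}; its irreducible characters are the linear ones and the two-dimensional induced characters $\operatorname{Ind}_U^H\phi$ for $\phi\ne\phi^{-1}$. These inflate to $G$ and account for the characters trivial on $V$. For the remaining characters we use the dual $\widehat V$. Identify it with $K$ via the trace pairing $x\mapsto\psi(\operatorname{Tr}(ax))$. Then $U$ acts on $\widehat V$ by $a\mapsto u^{-1}a$ and $\sigma$ by $a\mapsto a^q$. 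The $U$-orbits on $K^\times$ are the $q-1$ cosets of $U$, indexed by the norm $N(a)=a^{q+1}\in\mathbb F_q^\times$, and $\sigma$ preserves the norm. Hence $H$ has exactly $q-1$ nonzero orbits, each of size $q+1$. The stabilizer of $a$ in $H$ has order two, generated by the reflection $u\sigma$ with $ua^q=a$, that is, $u=a^{1-q}$. Each nonzero orbit therefore yields two irreducible characters of degree $q+1$, namely $\operatorname{Ind}_{V\rtimes\langle s_a\rangle}^G(\lambda_a\otimes\varepsilon)$ with $\varepsilon=\pm1$. There are $2(q-1)$ such characters in total, and the sum of squared degrees checks to $|G|$.

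Ambivalence is elementary. Every $H$-element is conjugate to its inverse within $H$. For $(v,h)$ with $h\in U$ and $h\ne1$, the element $h$ acts without fixed points, so $(v,h)$ is $V$-conjugate to $(0,h)$, which is conjugate to its inverse by $\sigma$. For $h=1$, the element $v$ is sent to $-v$ by $u=-1\in U$. For $h$ a reflection $s$, we reduce modulo $[V,s]$ to $(v_0,s)$ with $v_0$ fixed by $s$; such an element is an involution up to conjugacy, since $s$ fixes $v_0$ and so $(v_0,s)^{-1}=(-v_0,s)$, and $-v_0$ is conjugate to $v_0$ modulo $[V,s]$. Alternatively, the characters computed above are visibly real-valued, because $\lambda_a$ and $\lambda_{-a}$ lie in the same orbit ($-1\in U$) and the induced characters are therefore real.

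The main work, and the expected obstacle, is the multiplicity-freeness of products $\chi\psi$ with $\chi,\psi$ both of degree $q+1$; products involving inflated characters are easier and can be handled by Mackey's formula. We will compute $\langle\chi_{a,\varepsilon}\chi_{b,\delta},\theta\rangle$ by restricting to $V$ and organising the computation through Mackey decomposition over $H$-orbits of pairs. We have $\chi_{a,\varepsilon}\chi_{b,\delta}|_V=\sum_{u,u'}\lambda_{ua+u'b}$, summed over orbit representatives. For $\theta$ of degree $q+1$ over the orbit $\mathcal O_c$, the multiplicity is bounded by the number of solutions of $c=x+y$ with $x\in\mathcal O_a$ and $y\in\mathcal O_b$, counted modulo the order-two stabilizer and then split by the sign character. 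Geometrically, this is the intersection of two norm circles, which is a conic intersection over $\mathbb F_q$ of size at most $2$; the two solutions, when they exist, are swapped by the reflection stabilizing $c$. The sign splitting should then give multiplicity at most one for each $\varepsilon$. We will verify this by writing the conjugation-orbit sum explicitly and using the norm equations $N(x)=N(a)$ and $N(c-x)=N(b)$, which reduce to a linear condition on $\operatorname{Tr}(\bar c x)$ intersected with a circle. The degenerate cases $c=0$ (constituents trivial on $V$) and $c$ tangent to the circle need separate treatment. As a cross-check, we would verify Wigner's equality of Theorem~\ref{thm:wigner} via the square-root counts, which may in fact be the cleaner route if the character bookkeeping becomes heavy.
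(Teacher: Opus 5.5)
Your plan follows the paper's proof: Clifford theory over the $q-1$ norm circles, each with a reflection stabilizer of order two. For a product of two characters of degree $q+1$, the $\lambda_c$-weight space ($c\ne0$) is indexed by the at most two points of $\{(x,y)\in\mathcal O_a\times\mathcal O_b:x+y=c\}$. The reflection fixing $c$ interchanges these points, so each sign character of $H_c$ occurs at most once. Some steps need correcting or completing.

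\textbf{Ambivalence for reflection elements.} Your direct argument is wrong as written. We have $(v_0,s)^2=(2v_0,1)$, so $(v_0,s)$ is not an involution when $v_0\ne0$. Translations change $v$ only by elements of $[V,s]$, the $(-1)$-eigenspace of $s$, so they cannot carry $v_0$ to $-v_0$. The correct conjugator is $(0,-1)$: because $-1\in U$ is central in $H$, it sends $(v_0,s)$ to $(-v_0,s)=(v_0,s)^{-1}$. The paper instead conjugates by $(v,-1)$, which handles arbitrary $v$ in one step. Your fallback via real-valued characters is sound, since $H_{-a}=H_a$ and conjugation by $(0,-1)$ carries $\lambda_a\otimes\varepsilon$ to $\lambda_{-a}\otimes\varepsilon$.

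\textbf{The nonzero-weight cases.} The tangent case needs no separate argument, since a single point gives a one-dimensional weight space. You asserted the interchange in the two-point case; here is the justification. The reflection $h_c$ fixing $c$ has fixed line $\mathbb F_q c$, so a fixed solution would give $r=x/c\in\mathbb F_q$. Every solution $r$ is a root of $X^2-\tau X+\nu$, where $\tau,\nu$ are the prescribed trace and norm. A solution $r\in\mathbb F_q$ forces $\tau=2r$ and $\nu=r^2$, hence a double root and a unique solution.

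\textbf{The case $c=0$ and products with inflated characters.} You left the first open and only sketched the second. Both rest on one fact you should state: every irreducible character of $H$ restricts multiplicity-freely to each reflection subgroup. The two-dimensional characters $\operatorname{Ind}_U^H\phi$ vanish on reflections, so they restrict as $1+\mathrm{sgn}$. With this, the projection formula gives $A\cdot\operatorname{Ind}(\lambda_a\otimes\varepsilon)=\operatorname{Ind}(\lambda_a\otimes A|_{H_a}\varepsilon)$, which is multiplicity-free. For $c=0$, the $V$-fixed space is nonzero only when $\mathcal O_b=-\mathcal O_a=\mathcal O_a$. Its lines, indexed by $(x,-x)$ with $x\in\mathcal O_a$, are permuted transitively by $H$ with stabilizer $H_x$. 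So it is $\operatorname{Ind}_{H_x}^H\eta$ for a linear $\eta$, and by Frobenius reciprocity each inflated $\theta$ occurs $\langle\theta|_{H_x},\eta\rangle\le1$ times.
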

\begin{proof}
Write $N(x)=x^{q+1}$ and use the multiplication
$(v,h)(w,k)=(v+h(w),hk)$ in $G$.
First we prove ambivalence. A translation $(v,1)$ is inverted by
$(0,-1)$. If $h\in U\setminus\{1\}$, then $1-h$ is invertible on
$V$, so $(v,h)$ is conjugate under $V$ to $(0,h)$; the latter is
inverted by $(0,\sigma)$. Finally, if $h\in H\setminus U$, then
$h^2=1$, and direct multiplication gives
\[
 (v,-1)(v,h)(v,-1)^{-1}=(-h(v),h)=(v,h)^{-1}.
\]

We prove multiplicity-freeness by the little-group description of
irreducible representations of an abelian-by-finite split extension.
Identify $\widehat V$ with $K^+$ using
\[
 a\longmapsto\psi_a,\qquad
 \psi_a(v)=\exp\bigl(2\pi i\,\operatorname{Tr}_{K/\mathbb F_p}(av)/p\bigr),
 \qquad q=p^e.
\]
The contragredient action of $H$ on the labels is again the group of
norm-preserving maps generated by multiplication by $U$ and Frobenius.
Its nonzero orbits are
$\mathcal O_t=\{a:N(a)=t\}$, $t\in\mathbb F_q^\times$.
Each has size $q+1$, and the stabilizer $H_a$ is a reflection subgroup
of order two. The irreducibles of $G$ consist of those inflated from
$H$ and the representations
\[
 T_{t,\varepsilon}
 =\operatorname{Ind}_{V\rtimes H_a}^{G}
       (\psi_a\otimes\varepsilon),
 \qquad a\in\mathcal O_t,\quad
 \varepsilon\in\operatorname{Irr}(H_a).
\]
Here $\psi_a$ extends to $V\rtimes H_a$ by being trivial on $H_a$.
The restriction of $T_{t,\varepsilon}$ to $V$ contains each weight
$\psi_x$, $x\in\mathcal O_t$, exactly once.

By Proposition~\ref{prop:generalized-dihedral-sr}, the dihedral
group $H$ has multiplicity-free products of irreducibles.
Moreover, every irreducible of $H$ restricts multiplicity-freely to
each reflection subgroup: its degree is one or two, and in degree two
a reflection has eigenvalues $1,-1$. It follows at once that products
of two inflated irreducibles, or of an inflated irreducible with one
$T_{t,\varepsilon}$, are multiplicity-free.

Consider $T_{a,\varepsilon}\otimes T_{b,\delta}$, where now
$a,b\in\mathbb F_q^\times$ index norm values. At a nonzero weight
$\psi_z$ the weight space is a sum of one-dimensional spaces indexed by
\[
 \mathcal S_z=\{(x,y):x+y=z,\ N(x)=a,\ N(y)=b\}.
\]
This set has at most two members. Indeed, with $r=x/z$, its defining
conditions prescribe both
\[
 N(r)=a/N(z),\qquad
 r+r^q=1+(a-b)/N(z).
\]
Thus $r$ is a root of a fixed quadratic polynomial over $\mathbb F_q$.
If the prescribed trace and norm have two solutions, they are
exchanged by Frobenius. Indeed, a solution in $\mathbb F_q$ would
equal half the prescribed trace and give a double root. Equivalently,
the nonidentity element of $H_z$ exchanges the two members of
$\mathcal S_z$. Its action on the corresponding two-dimensional
weight space therefore has trace zero and square one, and contains
each of the two characters of $H_z$ once. If there is at most one
member, the same multiplicity bound is immediate. The little-group
correspondence now shows that every irreducible supported on a
nonzero $V$-orbit occurs at most once in the tensor product.

The zero-weight space is zero unless $a=b$. When $a=b$, its labels are
$(x,-x)$ with $N(x)=a$, a transitive $H$-set whose stabilizer is
$H_x\cong C_2$. The corresponding representation of $H$ is
$\operatorname{Ind}_{H_x}^{H}\eta$ for some linear character $\eta$
of $H_x$. Frobenius reciprocity and the restriction property of
dihedral irreducibles proved above show that this representation is
multiplicity-free. It gives exactly the inflated constituents of
the tensor product. All cases have been treated, so $G$ is SR.
\end{proof}

\begin{thm}[A Heisenberg extension of the norm-circle family]
\label{thm:arithmetic-heisenberg-circle}
Let $q$ be an odd prime power, let $V$ be a two-dimensional
$\mathbb F_q$-space, and let $\omega$ be a nondegenerate alternating
form on $V$. Choose a nonsplit norm form on $V$ and let
$D=O^-_2(q)=U\rtimes\langle\sigma\rangle$, where $|U|=q+1$ and
$\sigma$ is a reflection. Define
\[
 P=V\times\mathbb F_q,\qquad
 (v,z)(w,s)=(v+w,z+s+\tfrac12\omega(v,w)).
\]
The action $d(v,z)=(dv,\det(d)z)$ defines a group
$\widetilde G_q=P\rtimes D$ of order $2q^3(q+1)$.
Then $\widetilde G_q$ is simply reducible.
\end{thm}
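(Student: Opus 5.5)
The plan is to treat $\widetilde G_q$ in two layers: the quotient by the centre $Z=\{0\}\times\mathbb F_q$ of $P$, and the genuinely Heisenberg part. First, $D$ preserves $\omega$ up to the factor $\det(d)$, since $\omega(dv,dw)=\det(d)\omega(v,w)$ on a two-dimensional space. Hence the stated action is by automorphisms of $P$, the semidirect product is well defined, and $|\widetilde G_q|=q^3\cdot2(q+1)$. The quotient $\widetilde G_q/Z\cong V\rtimes D$ is, after identifying $V$ with $\mathbb F_{q^2}$ via the norm form, the affine norm-circle group of Theorem~\ref{thm:arithmetic-norm-circle}. That group is SR. So every irreducible character trivial on $Z$ behaves correctly, and any product whose constituents are all trivial on $Z$ is already handled.

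For ambivalence, elements mapping into $V\rtimes D$ can already be conjugated to their inverses modulo $Z$. It then remains to correct the $Z$-coordinate.

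\emph{Elements outside $P$.} For $(v,z)d$ with $d\ne1$, the map $1-d$ is invertible on $V$ when $d\in U\setminus\{1\}$. In that case conjugation by $V$ reduces the element to $(0,z')d$. If moreover $\det d=1$ and $d\ne1$, conjugation by central-type elements can also move $z'$, since $(1-\det d)$ acts on $Z$ only when $\det d=-1$. The general approach is to conjugate by $\sigma$ or $-1$ and check explicitly, as in the affine proof.

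\emph{Elements $(v,z)\in P$.} Here I would use the element $-1\in U$, which has determinant $1$. It sends $(v,z)$ to $(-v,z)$, while $(v,z)^{-1}=(-v,-z)$. Combining it with a reflection of determinant $-1$ that fixes the line through $v$, when $v\neq0$, or with any reflection when $v=0$, sends $z\mapsto-z$. Conjugation by $P$ adjusts $z$ by $\omega(v,\cdot)$, which is arbitrary when $v\ne0$. So these elements are ambivalent.

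For multiplicity-freeness I would use the Stone--von Neumann description. For each nonzero central character $\psi_c$, $c\in\mathbb F_q^\times$, $P$ has a unique irreducible $\rho_c$ of degree $q$. Since $D$ sends $c$ to $\det(d)c$, the stabilizer of $c$ is $U$, and the orbits are $\{c,-c\}$. The Weil representation extends $\rho_c$ to $P\rtimes U$, because $U\le\mathrm{SL}_2(q)$ and the relevant extension obstruction vanishes for these cyclic groups. The faithful-on-$Z$ irreducibles are therefore $\operatorname{Ind}_{P\rtimes U}^{\widetilde G}(\widehat\rho_c\otimes\chi)$ for $\chi\in\operatorname{Irr}(U)$, each of degree $2q$. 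With this list, the products split into cases by their central characters.

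\emph{(i) Faithful times one trivial on $Z$.} I would restrict to $P\rtimes U$ and use that $\rho_c\otimes\psi_a$ is isomorphic to $\rho_c$. The computation then reduces to the known decomposition $\widehat\rho_c|_U$, which is the Weil character restricted to a nonsplit torus. That restriction is the sum of the $q$ characters of $U$ other than one specific quadratic character, each with multiplicity one. Multiplicity one should follow from this, combined with the dihedral restriction property used in the affine proof.

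\emph{(ii) Two faithful characters with $c+c'\ne0$.} The product is $\rho_{c+c'}$ tensored with a $q$-dimensional space on which $U$ acts through a Weil-type representation. Counting constituents by character inner products over $U$, at most one copy of each induced irreducible should occur.

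\emph{(iii) $c'=-c$.} Here $\rho_c\otimes\rho_{-c}\cong\mathbb C[V]$, the permutation module on $V$. The product therefore factors through $V\rtimes D$. The problem reduces to the tensor-multiplicity computation for the norm-circle group, namely the zero-weight and nonzero-orbit analysis in the proof of Theorem~\ref{thm:arithmetic-norm-circle}, which gives at most two labels per weight, exchanged by Frobenius.

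The main obstacle will be case (ii), and the compatibility of the Weil-character twists with induction up to $D$. I would first try to compute $\langle\chi\psi,\eta\rangle$ directly via Mackey's formula over $P\rtimes U$. This reduces it to the inner product over $U$ of products of three Weil characters on the torus. Their values are explicit, namely $\pm1$ off the identity and $q$ at the identity, so the sums can be evaluated by hand and compared with the bound $1$.
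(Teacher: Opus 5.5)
You follow the paper's route: Clifford theory over $J=P\rtimes U$, an extension $\widehat\rho_c$ of the Heisenberg character, unimodularity of $\widehat\rho_c$ on $U\setminus\{1\}$, and the affine quotient for the $Z$-trivial part. However, two steps are wrong or missing as written.

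\textbf{Ambivalence.} Every $d\in U$ has $\det d=1$, so conjugation by $Z$ does not move $z'$ at all. Your sentence asserts that it does, and then gives a reason that contradicts it. What works in the rotation coset is that $\sigma$ conjugates $(0,z')u$ to $(0,-z')u^{-1}=((0,z')u)^{-1}$. The reflection coset is never treated, and your reduction does not apply there because $1-r$ is singular. Translations remove only the $(-1)$-eigencomponent of $v$. It is $\det r=-1$ that lets conjugation by $(0,c)$ replace $z$ by $z+2c$. The resulting element $(v_+,0)r$, with $rv_+=v_+$, is then inverted by $-I$.

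\textbf{Multiplicity-freeness.} Splitting by ``the central character of the product'' does not match the structure. Each $T_{c,\nu}=\operatorname{Ind}_J^{\widetilde G_q}R_{c,\nu}$ lies over the pair $\{\pm c\}$. Hence $T_{c,\nu}T_{c',\nu'}=\operatorname{Ind}(R_{c,\nu}R_{c',\nu'})+\operatorname{Ind}(R_{c,\nu}R_{c',\nu'}^{\sigma})$ always has two pieces, over $c+c'$ and $c-c'$. In particular, your cases (ii) and (iii) occur inside the same product when $c'=\pm c$.

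What you call the main obstacle is exactly the claim that these two pieces share no constituent and that induction from $J$ merges nothing. It closes in one line, as in the paper. For $c,c'\ne0$ the sign orbits of $c+c'$ and $c-c'$ are different. Since $\sigma$ negates the central character, distinct $J$-constituents with the same nonzero central character induce to distinct irreducibles. In your Mackey formulation: at most one of the four pairings of $R_aR_b$ or $R_aR_b^\sigma$ with $R_c$ or $R_c^\sigma$ is nonzero. That pairing equals $(q+S)/(q+1)$, where $S$ is a sum of $q$ unimodular terms, so it is less than $2$.

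In (iii) you also cannot cite the SR property of the affine group, because the $Z$-trivial piece is not a product of two of its irreducibles. Argue directly instead. $R_{c,\nu}R_{c,\nu'}^{\sigma}$ restricts to $P$ as the sum of all $q^2$ characters of $V$, each once. So as a $J$-module it is one linear character of $U$ plus each $L_t$ once. Induction to $\widetilde G_q$ then gives a multiplicity-free sum, with each $L_t$ contributing its two distinct extensions.

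Finally, the paper gets $|\widehat\rho_c(u)|=1$ without Weil character tables or normalization issues. Conjugation by $\widehat\rho_c(u)$ permutes the Weyl-operator basis of $\operatorname{End}(\rho_c)$ by $v\mapsto uv$, which fixes only $v=0$.
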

\begin{proof}
Write $J=P\rtimes U$ and $Z=Z(P)$.
We first verify ambivalence. Conjugation within $P$ changes the
central coordinate of $(v,z)$ arbitrarily if $v\ne0$; the rotation
$-I$ changes $v$ to $-v$. A pure central element is inverted by
$\sigma$. If $u\in U\setminus\{1\}$, then $1-u$ is invertible on
$V$, so every element with rotational part $u$ is $P$-conjugate to
$(0,z;u)$. Reflection by $\sigma$ inverts this representative.
Finally let $r\in D\setminus U$. Conjugation by a translation
removes the $(-1)$-eigenspace component of $v$, and conjugation by
a central element removes the central coordinate because $r$
negates $Z$. Thus an element with reflection part $r$ is conjugate
to $(v_+,0;r)$, where $rv_+=v_+$. The rotation $-I$ inverts this
representative.

Fix a nontrivial additive character $\psi$ of $\mathbb F_q$.
The irreducible characters of $P$ consist of the $q^2$ linear
characters indexed by $V$ and, for each $\lambda\in\mathbb F_q^\times$,
a unique character $\rho_\lambda$ of degree $q$ with central
character $z\mapsto\psi(\lambda z)$. Each $\rho_\lambda$ extends
to $J$: an intertwiner for a generator of the cyclic group $U$
can be rescaled to have order $q+1$. Fix such extensions.

We require the following elementary character identity:
\begin{equation}\label{eq:circle-schrodinger-trace}
 |\rho_\lambda(u)|^2=1
 \quad(1\ne u\in U).
\end{equation}
Indeed, the $q^2$ Weyl operators
$\rho_\lambda((v,0))$, $v\in V$, form a basis of
$\operatorname{End}(\rho_\lambda)$. Conjugation by the extension
of $u$ permutes this basis by $v\mapsto uv$. A nonidentity norm
rotation fixes only $v=0$, so the trace on the endomorphism space
is one. This trace equals $|\rho_\lambda(u)|^2$.
In particular,
\[
 \langle(\rho_\lambda)_U,(\rho_\lambda)_U\rangle_U
 =\frac{q^2+q}{q+1}=q.
\]
Because $U$ is cyclic and $\rho_\lambda(1)=q$, its restriction to
$U$ is multiplicity-free.

The irreducibles of $J$ are: the linear characters inflated from
$U$; the characters $L_t$ of degree $q+1$ induced from the nonzero
norm orbits of linear characters of $P$, with
$t\in\mathbb F_q^\times$; and
$R_{\lambda,\nu}=\rho_\lambda\nu$, where
$\lambda\ne0$ and $\nu\in\widehat U$.
Reflection fixes each isomorphism type $L_t$ and changes the
central character $\lambda$ to $-\lambda$.
Consequently the irreducibles of $\widetilde G_q$ not having $Z$
in their kernel are precisely
\[
 T_{\lambda,\nu}
 =\operatorname{Ind}_{J}^{\widetilde G_q}R_{\lambda,\nu},
\]
with one representative for each reflection orbit; they have
degree $2q$. The other irreducibles inflate from
$\widetilde G_q/Z$, which is the affine norm-circle group of
Theorem~\ref{thm:arithmetic-norm-circle}.

Products of two such inflated irreducibles are already
multiplicity-free. If $A$ is inflated and $T=\operatorname{Ind}R$,
the projection formula gives
$A T=\operatorname{Ind}(A_JR)$. When $A$ comes from the dihedral
quotient, $A_J$ is one linear $U$-character or two distinct such
characters. Their products with $R$ remain distinct, also after
induction, because they have the same nonzero central character
and reflection changes its sign.
In the other case $A_J=L_t$. The $P$-restriction of $RL_t$ is
$(q+1)\rho_\lambda$, so
$RL_t=\rho_\lambda\otimes M$ for a $U$-module $M$ of dimension
$q+1$. Since $L_t(u)=0$ for $u\ne1$, identity
\eqref{eq:circle-schrodinger-trace} shows that $M(u)=0$ there.
Thus $M$ is the regular representation of $U$, proving
multiplicity-freeness in this case as well.

It remains to multiply two characters $T$. The index-two
induction formula gives
\[
 (\operatorname{Ind}R_\lambda)(\operatorname{Ind}R_\mu)
 =\operatorname{Ind}(R_\lambda R_\mu)
  +\operatorname{Ind}(R_\lambda R_\mu^{\sigma}).
\]
Their central characters are $\lambda+\mu$ and $\lambda-\mu$;
the nonzero sign-orbits of these two parameters are distinct.
If $\alpha+\beta\ne0$, the $P$-restriction of
$R_\alpha R_\beta$ is $q\rho_{\alpha+\beta}$. Therefore
\[
 R_\alpha R_\beta=\rho_{\alpha+\beta}\otimes M,
 \qquad \dim M=q,
\]
for a $U$-module $M$. Equation~\eqref{eq:circle-schrodinger-trace}
gives $|M(u)|=1$ for every $u\ne1$. Hence
$\langle M,M\rangle_U=(q^2+q)/(q+1)=q=\dim M$, so $M$ is
multiplicity-free. Induction again cannot identify distinct
constituents with the same nonzero central character.

If $\alpha+\beta=0$, the $P$-restriction of
$R_\alpha R_\beta$ is the sum of every linear character of $P$
once. As a $J$-representation it is therefore one linear
$U$-character together with every $L_t$ once. The linear character
induces multiplicity-freely to the dihedral quotient, and each
$L_t$ induces to the sum of its two distinct extensions.
This case is multiplicity-free and its constituents, having $Z$
in their kernel, cannot coincide with those from the nonzero
central-character term. All tensor products have been treated.
\end{proof}

\section{Implementation and reproducibility}\label{app:implementation}
The accompanying archive contains the source programs, input catalogues,
quadratic representatives and completed execution certificates. The
following mapping connects the pseudocode to the executable sources.
\begin{center}\small
\begin{tabular}{@{}>{\raggedright\arraybackslash}p{.29\linewidth}>{\raggedright\arraybackslash}p{.64\linewidth}@{}}\toprule
Procedure & Source files\\\midrule
Algorithms~\ref{alg:srtest}--\ref{alg:catalogue}
 & \path{sr_search2_2000.g}, \path{sr_search512.g},
   \path{sr_search1536.g}\\
Algorithm~\ref{alg:descendants}
 & \path{scriptG.g}; completed checkpoint and console transcript\\
Algorithm~\ref{alg:exportdescendants}
 & \path{makeCSV.g}\\
Algorithm~\ref{alg:class2-filter}
 & \path{sr_class2.g}\\
Algorithm~\ref{alg:small-exclusion}
 & \path{mf_small.cpp}, \path{review_small_forms.py}\\
Algorithm~\ref{alg:last-pencil}
 & \path{classify_last_pencil.g}\\
Algorithm~\ref{alg:quotient-orbits}
 & \path{extend_parents.cpp}, \path{parent_subgroup_orbits.cpp},
   \path{canonical_class2.py}\\
Algorithm~\ref{alg:class2-export}
 & \path{export_sr1024_class2.g},
   \path{sr1024_special_representatives.g}\\
CSV reconstruction
 & \path{read_sr1024_class3plus.g}\\
Arithmetic checks in Section~\ref{sec:count-patterns}
 & \path{counting_structures.py}, \path{abelian_hall_counts.py},
   \path{verify_progressions.py}\\\bottomrule
\end{tabular}
\end{center}
The README gives execution order and dependencies. Software versions are specified in the references and recorded in
execution logs. The source archive contains the completed descendant
checkpoint and console transcript together with the programs that
generate, audit and export their records.

The core class-two implementation was checked by $79$ exact ordinary
character comparisons and $79$ extraction--reconstruction tests at smaller
orders. The three known order-512 groups discussed after
Theorem~\ref{thm:class2-complete} test the distinction between polar and
quadratic equivalence. Every new special representative was checked by
the form criterion, independent Wigner moments and presentation
reconstruction. The separate class-two exporter performs these checks
on all $221$ representatives and requires the $95+126$ split before
writing its CSV. The combined catalogue
\path{SR_groups_results1024.csv} contains all $803$ representatives;
its exporter independently verifies the Wigner identities and direct
$C_2$-factor flags, recovering $317$ positive flags.

The canonical-descendant checkpoint audit checks every expected job,
including zero-output jobs, and matches its $581$ presentation codes
with the CSV. This checks the original completed computation; it does
not repeat the large descendant-generation run. Its completeness and
nonredundancy use the generation theorem in Section~\ref{sec:1024higher}.
The source \path{audit_count_patterns.g} checks the Hall-subgroup
classification used in Corollary~\ref{cor:four-odd-census}; its input list
is included. No comparison of numerical group invariants is used as an
isomorphism test in the classification.

\subsection{Reconstructing a row and checking the arithmetic}
\label{app:reconstruction-usage}
The programs, data, and execution records are arranged in the
\path{code/}, \path{data/}, and \path{validation/} directories of the
accompanying reproduction package. From its root, the following GAP
commands reconstruct the first high-class representative and check its
order, class, and SR property:
\begin{lstlisting}
Read("code/read_sr1024_class3plus.g");;
rows := SR1024ReadClass3Plus("data/sr1024_class3plus.csv");;
G := SR1024GroupFromClass3Row(rows[1]);;
[Size(G), NilpotencyClassOfGroup(G), SR1024CheckSR(G)];
# [ 1024, 3, true ]
\end{lstlisting}
Replacing the index $1$ selects any other row. The complete catalogue
\path{SR_groups_results1024.csv} and the class-two catalogue use the
same exact-integer \texttt{Code} convention; the paper's orbit arguments
establish nonisomorphism of their representatives.

The arithmetic verification uses the accompanying census as input. For
example, from the same package root, run
\begin{lstlisting}
python3 code/counting_structures.py data/f_of_n.csv \
  --class2-audit code/audited_class2.csv \
  --class2-1024 data/sr1024_class2.csv \
  --output-dir validation
python3 code/abelian_hall_counts.py data/f_of_n.csv \
  --output validation/abelian_hall_counts.json
python3 code/verify_progressions.py data/f_of_n.csv \
  --output validation/progression_checks.json
\end{lstlisting}
The first program reconstructs the census from the directly
indecomposable counts and independently checks the formal logarithm
identity. The second computes the dihedral-product subclass by a
separate multiset calculation and compares every coefficient with the
cycle-index recurrence. The third checks the exact formulas at every
applicable order in the census. The JSON certificates record input
hashes and the resulting counts. These arithmetic calculations use
Python with its standard library and do not repeat the underlying
group enumeration.

\section*{Acknowledgements}
The author thanks Prof. Lizhong Wang, Prof. Ping Jin, and Prof. Yanjun Liu
for helpful discussions and encouragement concerning the study of group theory.
The author gratefully acknowledges the School of Mathematics and the
Informatization Office of Shandong University for providing server computing resources.

\section*{AI disclosure}
The author used AI to write GAP, Python, and C++ code implementing the author's algorithm, to explore proofs, and to assist in drafting and checking references. All mathematical ideas, algorithmic concepts, conceptual framework, and proof strategies were developed by the author. The author has verified all AI-assisted arguments and citations and takes full responsibility for the content of the paper. The author gratefully acknowledges the help of the different AI models used in this work.

\section*{Funding}
The author is supported by the Shandong Provincial Natural Science Foundation (Grant No. ZR2025MS36).
\bibliographystyle{amsplain}
\bibliography{SRgroup2000}
\end{document}

%% file: f_table_body.tex
2 & 1 & 4 & 1 & 6 & 1 & 8 & 3 & 10 & 1 \\
12 & 1 & 14 & 1 & 16 & 5 & 18 & 2 & 20 & 1 \\
22 & 1 & 24 & 4 & 26 & 1 & 28 & 1 & 30 & 1 \\
32 & 12 & 34 & 1 & 36 & 3 & 38 & 1 & 40 & 3 \\
42 & 1 & 44 & 1 & 46 & 1 & 48 & 8 & 50 & 2 \\
52 & 1 & 54 & 3 & 56 & 3 & 58 & 1 & 60 & 2 \\
62 & 1 & 64 & 25 & 66 & 1 & 68 & 1 & 70 & 1 \\
72 & 8 & 74 & 1 & 76 & 1 & 78 & 1 & 80 & 7 \\
82 & 1 & 84 & 2 & 86 & 1 & 88 & 3 & 90 & 2 \\
92 & 1 & 94 & 1 & 96 & 18 & 98 & 2 & 100 & 3 \\
102 & 1 & 104 & 3 & 106 & 1 & 108 & 5 & 110 & 1 \\
112 & 7 & 114 & 1 & 116 & 1 & 118 & 1 & 120 & 4 \\
122 & 1 & 124 & 1 & 126 & 2 & 128 & 54 & 130 & 1 \\
132 & 2 & 134 & 1 & 136 & 3 & 138 & 1 & 140 & 2 \\
142 & 1 & 144 & 20 & 146 & 1 & 148 & 1 & 150 & 2 \\
152 & 3 & 154 & 1 & 156 & 2 & 158 & 1 & 160 & 16 \\
162 & 5 & 164 & 1 & 166 & 1 & 168 & 4 & 170 & 1 \\
172 & 1 & 174 & 1 & 176 & 7 & 178 & 1 & 180 & 5 \\
182 & 1 & 184 & 3 & 186 & 1 & 188 & 1 & 190 & 1 \\
192 & 42 & 194 & 1 & 196 & 3 & 198 & 2 & 200 & 7 \\
202 & 1 & 204 & 2 & 206 & 1 & 208 & 7 & 210 & 1 \\
212 & 1 & 214 & 1 & 216 & 13 & 218 & 1 & 220 & 2 \\
222 & 1 & 224 & 15 & 226 & 1 & 228 & 2 & 230 & 1 \\
232 & 3 & 234 & 2 & 236 & 1 & 238 & 1 & 240 & 13 \\
242 & 2 & 244 & 1 & 246 & 1 & 248 & 3 & 250 & 3 \\
252 & 5 & 254 & 1 & 256 & 130 & 258 & 1 & 260 & 2 \\
262 & 1 & 264 & 4 & 266 & 1 & 268 & 1 & 270 & 3 \\
272 & 7 & 274 & 1 & 276 & 2 & 278 & 1 & 280 & 4 \\
282 & 1 & 284 & 1 & 286 & 1 & 288 & 42 & 290 & 1 \\
292 & 1 & 294 & 2 & 296 & 3 & 298 & 1 & 300 & 6 \\
302 & 1 & 304 & 7 & 306 & 2 & 308 & 2 & 310 & 1 \\
312 & 4 & 314 & 1 & 316 & 1 & 318 & 1 & 320 & 36 \\
322 & 1 & 324 & 11 & 326 & 1 & 328 & 3 & 330 & 1 \\
332 & 1 & 334 & 1 & 336 & 13 & 338 & 2 & 340 & 2 \\
342 & 2 & 344 & 3 & 346 & 1 & 348 & 2 & 350 & 2 \\
352 & 15 & 354 & 1 & 356 & 1 & 358 & 1 & 360 & 10 \\
362 & 1 & 364 & 2 & 366 & 1 & 368 & 7 & 370 & 1 \\
372 & 2 & 374 & 1 & 376 & 3 & 378 & 3 & 380 & 2 \\
382 & 1 & 384 & 94 & 386 & 1 & 388 & 1 & 390 & 1 \\
392 & 7 & 394 & 1 & 396 & 5 & 398 & 1 & 400 & 17 \\
402 & 1 & 404 & 1 & 406 & 1 & 408 & 4 & 410 & 1 \\
412 & 1 & 414 & 2 & 416 & 15 & 418 & 1 & 420 & 4 \\
422 & 1 & 424 & 3 & 426 & 1 & 428 & 1 & 430 & 1 \\
432 & 37 & 434 & 1 & 436 & 1 & 438 & 1 & 440 & 4 \\
442 & 1 & 444 & 2 & 446 & 1 & 448 & 35 & 450 & 4 \\
452 & 1 & 454 & 1 & 456 & 4 & 458 & 1 & 460 & 2 \\
462 & 1 & 464 & 7 & 466 & 1 & 468 & 5 & 470 & 1 \\
472 & 3 & 474 & 1 & 476 & 2 & 478 & 1 & 480 & 29 \\
482 & 1 & 484 & 3 & 486 & 7 & 488 & 3 & 490 & 2 \\
492 & 2 & 494 & 1 & 496 & 7 & 498 & 1 & 500 & 5 \\
502 & 1 & 504 & 10 & 506 & 1 & 508 & 1 & 510 & 1 \\
\textbf{512} & \textbf{317} & 514 & 1 & 516 & 2 & 518 & 1 & 520 & 4 \\
522 & 2 & 524 & 1 & 526 & 1 & 528 & 13 & 530 & 1 \\
532 & 2 & 534 & 1 & 536 & 3 & 538 & 1 & 540 & 10 \\
542 & 1 & 544 & 15 & 546 & 1 & 548 & 1 & 550 & 2 \\
552 & 4 & 554 & 1 & 556 & 1 & 558 & 2 & 560 & 12 \\
562 & 1 & 564 & 2 & 566 & 1 & 568 & 3 & 570 & 1 \\
572 & 2 & 574 & 1 & 576 & 104 & 578 & 2 & 580 & 2 \\
582 & 1 & 584 & 3 & 586 & 1 & 588 & 5 & 590 & 1 \\
592 & 7 & 594 & 3 & 596 & 1 & 598 & 1 & 600 & 12 \\
602 & 1 & 604 & 1 & 606 & 1 & 608 & 15 & 610 & 1 \\
612 & 5 & 614 & 1 & 616 & 4 & 618 & 1 & 620 & 2 \\
622 & 1 & 624 & 13 & 626 & 1 & 628 & 1 & 630 & 2 \\
632 & 3 & 634 & 1 & 636 & 2 & 638 & 1 & 640 & 84 \\
642 & 1 & 644 & 2 & 646 & 1 & 648 & 23 & 650 & 2 \\
652 & 1 & 654 & 1 & 656 & 7 & 658 & 1 & 660 & 4 \\
662 & 1 & 664 & 3 & 666 & 2 & 668 & 1 & 670 & 1 \\
672 & 29 & 674 & 1 & 676 & 3 & 678 & 1 & 680 & 4 \\
682 & 1 & 684 & 5 & 686 & 3 & 688 & 7 & 690 & 1 \\
692 & 1 & 694 & 1 & 696 & 4 & 698 & 1 & 700 & 5 \\
702 & 3 & 704 & 35 & 706 & 1 & 708 & 2 & 710 & 1 \\
712 & 3 & 714 & 1 & 716 & 1 & 718 & 1 & 720 & 32 \\
722 & 2 & 724 & 1 & 726 & 2 & 728 & 4 & 730 & 1 \\
732 & 2 & 734 & 1 & 736 & 15 & 738 & 2 & 740 & 2 \\
742 & 1 & 744 & 4 & 746 & 1 & 748 & 2 & 750 & 3 \\
752 & 7 & 754 & 1 & 756 & 10 & 758 & 1 & 760 & 4 \\
762 & 1 & 764 & 1 & 766 & 1 & 768 & 227 & 770 & 1 \\
772 & 1 & 774 & 2 & 776 & 3 & 778 & 1 & 780 & 4 \\
782 & 1 & 784 & 18 & 786 & 1 & 788 & 1 & 790 & 1 \\
792 & 10 & 794 & 1 & 796 & 1 & 798 & 1 & 800 & 39 \\
802 & 1 & 804 & 2 & 806 & 1 & 808 & 3 & 810 & 5 \\
812 & 2 & 814 & 1 & 816 & 13 & 818 & 1 & 820 & 2 \\
822 & 1 & 824 & 3 & 826 & 1 & 828 & 5 & 830 & 1 \\
832 & 35 & 834 & 1 & 836 & 2 & 838 & 1 & 840 & 7 \\
842 & 1 & 844 & 1 & 846 & 2 & 848 & 7 & 850 & 2 \\
852 & 2 & 854 & 1 & 856 & 3 & 858 & 1 & 860 & 2 \\
862 & 1 & 864 & 81 & 866 & 1 & 868 & 2 & 870 & 1 \\
872 & 3 & 874 & 1 & 876 & 2 & 878 & 1 & 880 & 12 \\
882 & 4 & 884 & 2 & 886 & 1 & 888 & 4 & 890 & 1 \\
892 & 1 & 894 & 1 & 896 & 81 & 898 & 1 & 900 & 13 \\
902 & 1 & 904 & 3 & 906 & 1 & 908 & 1 & 910 & 1 \\
912 & 13 & 914 & 1 & 916 & 1 & 918 & 3 & 920 & 4 \\
922 & 1 & 924 & 4 & 926 & 1 & 928 & 15 & 930 & 1 \\
932 & 1 & 934 & 1 & 936 & 10 & 938 & 1 & 940 & 2 \\
942 & 1 & 944 & 7 & 946 & 1 & 948 & 2 & 950 & 2 \\
952 & 4 & 954 & 2 & 956 & 1 & 958 & 1 & 960 & 76 \\
962 & 1 & 964 & 1 & 966 & 1 & 968 & 7 & 970 & 1 \\
972 & 18 & 974 & 1 & 976 & 7 & 978 & 1 & 980 & 5 \\
982 & 1 & 984 & 4 & 986 & 1 & 988 & 2 & 990 & 2 \\
992 & 15 & 994 & 1 & 996 & 2 & 998 & 1 & 1000 & 12 \\
1002 & 1 & 1004 & 1 & 1006 & 1 & 1008 & 32 & 1010 & 1 \\
1012 & 2 & 1014 & 2 & 1016 & 3 & 1018 & 1 & 1020 & 4 \\
1022 & 1 & \textbf{1024} & \textbf{803} & 1026 & 3 & 1028 & 1 & 1030 & 1 \\
1032 & 4 & 1034 & 1 & 1036 & 2 & 1038 & 1 & 1040 & 12 \\
1042 & 1 & 1044 & 5 & 1046 & 1 & 1048 & 3 & 1050 & 2 \\
1052 & 1 & 1054 & 1 & 1056 & 29 & 1058 & 2 & 1060 & 2 \\
1062 & 2 & 1064 & 4 & 1066 & 1 & 1068 & 2 & 1070 & 1 \\
1072 & 7 & 1074 & 1 & 1076 & 1 & 1078 & 2 & 1080 & 19 \\
1082 & 1 & 1084 & 1 & 1086 & 1 & 1088 & 35 & 1090 & 1 \\
1092 & 4 & 1094 & 1 & 1096 & 3 & 1098 & 2 & 1100 & 5 \\
1102 & 1 & 1104 & 13 & 1106 & 1 & 1108 & 1 & 1110 & 1 \\
1112 & 3 & 1114 & 1 & 1116 & 5 & 1118 & 1 & 1120 & 28 \\
1122 & 1 & 1124 & 1 & 1126 & 1 & 1128 & 4 & 1130 & 1 \\
1132 & 1 & 1134 & 5 & 1136 & 7 & 1138 & 1 & 1140 & 4 \\
1142 & 1 & 1144 & 4 & 1146 & 1 & 1148 & 2 & 1150 & 2 \\
1152 & 244 & 1154 & 1 & 1156 & 3 & 1158 & 1 & 1160 & 4 \\
1162 & 1 & 1164 & 2 & 1166 & 1 & 1168 & 7 & 1170 & 2 \\
1172 & 1 & 1174 & 1 & 1176 & 10 & 1178 & 1 & 1180 & 2 \\
1182 & 1 & 1184 & 15 & 1186 & 1 & 1188 & 10 & 1190 & 1 \\
1192 & 3 & 1194 & 1 & 1196 & 2 & 1198 & 1 & 1200 & 34 \\
1202 & 1 & 1204 & 2 & 1206 & 2 & 1208 & 3 & 1210 & 2 \\
1212 & 2 & 1214 & 1 & 1216 & 35 & 1218 & 1 & 1220 & 2 \\
1222 & 1 & 1224 & 10 & 1226 & 1 & 1228 & 1 & 1230 & 1 \\
1232 & 12 & 1234 & 1 & 1236 & 2 & 1238 & 1 & 1240 & 4 \\
1242 & 3 & 1244 & 1 & 1246 & 1 & 1248 & 29 & 1250 & 5 \\
1252 & 1 & 1254 & 1 & 1256 & 3 & 1258 & 1 & 1260 & 10 \\
1262 & 1 & 1264 & 7 & 1266 & 1 & 1268 & 1 & 1270 & 1 \\
1272 & 4 & 1274 & 2 & 1276 & 2 & 1278 & 2 & 1280 & 202 \\
1282 & 1 & 1284 & 2 & 1286 & 1 & 1288 & 4 & 1290 & 1 \\
1292 & 2 & 1294 & 1 & 1296 & 70 & 1298 & 1 & 1300 & 5 \\
1302 & 1 & 1304 & 3 & 1306 & 1 & 1308 & 2 & 1310 & 1 \\
1312 & 15 & 1314 & 2 & 1316 & 2 & 1318 & 1 & 1320 & 7 \\
1322 & 1 & 1324 & 1 & 1326 & 1 & 1328 & 7 & 1330 & 1 \\
1332 & 5 & 1334 & 1 & 1336 & 3 & 1338 & 1 & 1340 & 2 \\
1342 & 1 & 1344 & 75 & 1346 & 1 & 1348 & 1 & 1350 & 6 \\
1352 & 7 & 1354 & 1 & 1356 & 2 & 1358 & 1 & 1360 & 12 \\
1362 & 1 & 1364 & 2 & 1366 & 1 & 1368 & 10 & 1370 & 1 \\
1372 & 5 & 1374 & 1 & 1376 & 15 & 1378 & 1 & 1380 & 4 \\
1382 & 1 & 1384 & 3 & 1386 & 2 & 1388 & 1 & 1390 & 1 \\
1392 & 13 & 1394 & 1 & 1396 & 1 & 1398 & 1 & 1400 & 10 \\
1402 & 1 & 1404 & 10 & 1406 & 1 & 1408 & 81 & 1410 & 1 \\
1412 & 1 & 1414 & 1 & 1416 & 4 & 1418 & 1 & 1420 & 2 \\
1422 & 2 & 1424 & 7 & 1426 & 1 & 1428 & 4 & 1430 & 1 \\
1432 & 3 & 1434 & 1 & 1436 & 1 & 1438 & 1 & 1440 & 78 \\
1442 & 1 & 1444 & 3 & 1446 & 1 & 1448 & 3 & 1450 & 2 \\
1452 & 5 & 1454 & 1 & 1456 & 12 & 1458 & 11 & 1460 & 2 \\
1462 & 1 & 1464 & 4 & 1466 & 1 & 1468 & 1 & 1470 & 2 \\
1472 & 35 & 1474 & 1 & 1476 & 5 & 1478 & 1 & 1480 & 4 \\
1482 & 1 & 1484 & 2 & 1486 & 1 & 1488 & 13 & 1490 & 1 \\
1492 & 1 & 1494 & 2 & 1496 & 4 & 1498 & 1 & 1500 & 11 \\
1502 & 1 & 1504 & 15 & 1506 & 1 & 1508 & 2 & 1510 & 1 \\
1512 & 19 & 1514 & 1 & 1516 & 1 & 1518 & 1 & 1520 & 12 \\
1522 & 1 & 1524 & 2 & 1526 & 1 & 1528 & 3 & 1530 & 2 \\
1532 & 1 & 1534 & 1 & \textbf{1536} & \textbf{559} & 1538 & 1 & 1540 & 4 \\
1542 & 1 & 1544 & 3 & 1546 & 1 & 1548 & 5 & 1550 & 2 \\
1552 & 7 & 1554 & 1 & 1556 & 1 & 1558 & 1 & 1560 & 7 \\
1562 & 1 & 1564 & 2 & 1566 & 3 & 1568 & 41 & 1570 & 1 \\
1572 & 2 & 1574 & 1 & 1576 & 3 & 1578 & 1 & 1580 & 2 \\
1582 & 1 & 1584 & 32 & 1586 & 1 & 1588 & 1 & 1590 & 1 \\
1592 & 3 & 1594 & 1 & 1596 & 4 & 1598 & 1 & 1600 & 93 \\
1602 & 2 & 1604 & 1 & 1606 & 1 & 1608 & 4 & 1610 & 1 \\
1612 & 2 & 1614 & 1 & 1616 & 7 & 1618 & 1 & 1620 & 21 \\
1622 & 1 & 1624 & 4 & 1626 & 1 & 1628 & 2 & 1630 & 1 \\
1632 & 29 & 1634 & 1 & 1636 & 1 & 1638 & 2 & 1640 & 4 \\
1642 & 1 & 1644 & 2 & 1646 & 1 & 1648 & 7 & 1650 & 2 \\
1652 & 2 & 1654 & 1 & 1656 & 10 & 1658 & 1 & 1660 & 2 \\
1662 & 1 & 1664 & 81 & 1666 & 2 & 1668 & 2 & 1670 & 1 \\
1672 & 4 & 1674 & 3 & 1676 & 1 & 1678 & 1 & 1680 & 24 \\
1682 & 2 & 1684 & 1 & 1686 & 1 & 1688 & 3 & 1690 & 2 \\
1692 & 5 & 1694 & 2 & 1696 & 15 & 1698 & 1 & 1700 & 5 \\
1702 & 1 & 1704 & 4 & 1706 & 1 & 1708 & 2 & 1710 & 2 \\
1712 & 7 & 1714 & 1 & 1716 & 4 & 1718 & 1 & 1720 & 4 \\
1722 & 1 & 1724 & 1 & 1726 & 1 & 1728 & 208 & 1730 & 1 \\
1732 & 1 & 1734 & 2 & 1736 & 4 & 1738 & 1 & 1740 & 4 \\
1742 & 1 & 1744 & 7 & 1746 & 2 & 1748 & 2 & 1750 & 3 \\
1752 & 4 & 1754 & 1 & 1756 & 1 & 1758 & 1 & 1760 & 28 \\
1762 & 1 & 1764 & 13 & 1766 & 1 & 1768 & 4 & 1770 & 1 \\
1772 & 1 & 1774 & 1 & 1776 & 13 & 1778 & 1 & 1780 & 2 \\
1782 & 5 & 1784 & 3 & 1786 & 1 & 1788 & 2 & 1790 & 1 \\
1792 & 197 & 1794 & 1 & 1796 & 1 & 1798 & 1 & 1800 & 27 \\
1802 & 1 & 1804 & 2 & 1806 & 1 & 1808 & 7 & 1810 & 1 \\
1812 & 2 & 1814 & 1 & 1816 & 3 & 1818 & 2 & 1820 & 4 \\
1822 & 1 & 1824 & 29 & 1826 & 1 & 1828 & 1 & 1830 & 1 \\
1832 & 3 & 1834 & 1 & 1836 & 10 & 1838 & 1 & 1840 & 12 \\
1842 & 1 & 1844 & 1 & 1846 & 1 & 1848 & 7 & 1850 & 2 \\
1852 & 1 & 1854 & 2 & 1856 & 35 & 1858 & 1 & 1860 & 4 \\
1862 & 2 & 1864 & 3 & 1866 & 1 & 1868 & 1 & 1870 & 1 \\
1872 & 32 & 1874 & 1 & 1876 & 2 & 1878 & 1 & 1880 & 4 \\
1882 & 1 & 1884 & 2 & 1886 & 1 & 1888 & 15 & 1890 & 3 \\
1892 & 2 & 1894 & 1 & 1896 & 4 & 1898 & 1 & 1900 & 5 \\
1902 & 1 & 1904 & 12 & 1906 & 1 & 1908 & 5 & 1910 & 1 \\
1912 & 3 & 1914 & 1 & 1916 & 1 & 1918 & 1 & 1920 & 185 \\
1922 & 2 & 1924 & 2 & 1926 & 2 & 1928 & 3 & 1930 & 1 \\
1932 & 4 & 1934 & 1 & 1936 & 17 & 1938 & 1 & 1940 & 2 \\
1942 & 1 & 1944 & 40 & 1946 & 1 & 1948 & 1 & 1950 & 2 \\
1952 & 15 & 1954 & 1 & 1956 & 2 & 1958 & 1 & 1960 & 10 \\
1962 & 2 & 1964 & 1 & 1966 & 1 & 1968 & 13 & 1970 & 1 \\
1972 & 2 & 1974 & 1 & 1976 & 4 & 1978 & 1 & 1980 & 10 \\
1982 & 1 & 1984 & 35 & 1986 & 1 & 1988 & 2 & 1990 & 1 \\
1992 & 4 & 1994 & 1 & 1996 & 1 & 1998 & 3 & 2000 & 32 \\